\documentclass[12pt, reqno]{amsart}

\usepackage{amsmath,amsfonts,amsthm,amssymb,color}
\usepackage{amsmath,bm}
\usepackage[utf8]{inputenc}

\usepackage{amsfonts}
\usepackage{amsmath}
\usepackage{amssymb}
\usepackage{amsthm} 
\usepackage{latexsym}
\usepackage{bbm}
\usepackage{mathrsfs}
\usepackage{nameref}
\usepackage{url}

\usepackage{xcolor}

\usepackage{float}
\usepackage[caption = false]{subfig}
\usepackage[final]{graphicx}
\usepackage{systeme}
\usepackage{cancel}
\usepackage{blkarray}
\usepackage{placeins}
\usepackage{mathrsfs}

\usepackage[T1]{fontenc}

\makeatletter
     \def\section{\@startsection{section}{1}%
     \z@{.7\linespacing\@plus\linespacing}{.5\linespacing}%
     {\bfseries
     \centering
     }}
     \def\@secnumfont{\bfseries}
     \makeatother

\usepackage[colorlinks=true,urlcolor=blue]{hyperref}


\newcommand{\R}{\mathbb R}





\setlength{\textheight}{19.5 cm}
\setlength{\textwidth}{15.5 cm}
\newtheorem{theorem}{Theorem}[section]
\newtheorem{lemma}[theorem]{Lemma}
\newtheorem{proposition}[theorem]{Proposition}
\newtheorem{corollary}[theorem]{Corollary}
\theoremstyle{definition}
\newtheorem{definition}[theorem]{Definition}

\theoremstyle{remark}
\newtheorem{remark}{Remark}
\numberwithin{equation}{section}
\setcounter{page}{1}

\DeclareMathOperator{\Var}{Var}

\newcommand\inner[2]{\langle #1, #2 \rangle}
\DeclareMathOperator*{\w}{w-\!\!} 

\newcommand\bel[1]{\begin{equation}\label{#1}}
\newcommand\ee{\end{equation}}

\newcommand\bE{{\mathbb{E}}}

\newcommand{\norm}[1]{\left\lVert#1\right\rVert}

 \DeclareGraphicsExtensions{.pdf,.png,.jpg,.tiff}

\restylefloat{figure}

\begin{document}
\title[MLE for a stochastic  wave equation with Malliavin calculus]{Statistical
inference for a stochastic  wave equation with Malliavin calculus.}

\author[F. Delgado-Vences]{Francisco Delgado-Vences} 
\address{{\it F. Delgado-Vences}. Conacyt  Research Fellow - Universidad Nacional Aut\'onoma de M\'exico. Instituto de Matem\'aticas, Oaxaca, M\'exico}
\email{delgado@im.unam.mx}

\author[J.J. Pavon-Espa\~{n}ol]{Jose Julian Pavon-Espa\~{n}ol} 
\address{  {\it J.J. Pavon-Espa\~{n}ol}.
		Facultad de ciencias - Universidad Nacional Aut\'onoma de M\'exico.  
		Ciudad Universitaria, Ciudad de M\'exico}
	\email{julian.pavon2@ ciencias.unam.mx }

\begin{abstract}

In this paper we study asymptotic properties of the maximum likelihood estimator (MLE) for the speed of a stochastic  wave equation. We follow a well-known spectral approach to write the solution as a Fourier series, then we project the solution to a $N$-finite dimensional space and find the estimator as a function of the time and  $N$.  We then show consistency of the MLE using classical stochastic analysis. Afterward we prove the asymptotic normality using the Malliavin-Stein method. We also study asymptotic properties of a discretized version of the MLE for the parameter. We provide this  asymptotic analysis of the proposed estimator as the number of Fourier modes, $N$, used in the estimation and the observation time go to infinity. Finally, we illustrate the theoretical results with some numerical experiments.

\end{abstract}

\maketitle

\medskip\noindent

\medskip\noindent
{\bf Keywords:} Maximum Likelihood; stochastic wave equation; 
statistical inference for SPDEs; Malliavin calculus; Stein’s
method; Discrete
sampling.
\allowdisplaybreaks

\section{Introduction and main results}

We consider the stochastic partial differential  equation (SPDE)
\begin{equation}\label{Ec1}
\frac{\partial^2 u}{\partial t^2}=\lambda\,\frac{\partial^2 u}{\partial x^2}
 + \sigma \dot{W}(t),\ 0<t<T,\ 0<x<\pi,
\end{equation}
where $W$ is a cylindrical Brownian motion over $L_2((0,\pi))$.
We will assume
\begin{align}
 \label{coef}&\sqrt{\lambda} >0,\  \ \sigma> 0;\\
\label{fron}
&u|_{t=0}=\frac{\partial u}{\partial t}\Bigg|_{t=0}=0,\ \
u|_{x=0}=u|_{x=\pi}=0.
\end{align}

In this paper we study asymptotic properties of the MLE for $\lambda$. The theory of SPDEs has become highly important given the number of applications to other areas of science. Considerable progresses have been established on the general theory for SPDEs, we refer, for instance, to the books \cite{DPZ}, \cite{GM}, \cite{LoR} or \cite{Chow}. Regarding statistical inference for SPDEs, we see that this is a relatively new research area. The well-known spectral approach, that is used in this paper, consists of writing the solution of \eqref{Ec1} $u$ as a Fourier series, and then using the first $N$ modes of the solution in the estimation on a fixed window time $[0,T]$. Liu and Lototsky \cite{LiuL} have studied \eqref{Ec1} in presence of damping, and Cialenco, Delgado and Kim \cite{CDK} have studied the MLE for the heat equation not only when $N\rightarrow\infty$ but also when $N,T\rightarrow\infty$. The objective of this paper is extend the results from \cite{LiuL} in a similar way to \cite{CDK}, note that in our case the wave equation is a hyperbolic equation in contrast to the heat equation, it is a parabolic one; for which there are several more detailed literature, see for example \cite{Cia} or \cite[Chapter 6]{LoR} for an overview of the subject.  Liu and Lototsky \cite{LiuL2} have studied the general case for hyperbolic equations with two parameters when $N\rightarrow\infty$.  For Hyperbolic SPDEs we cite \cite{Ja1}, where the author introduce a different method for study statistical inference for SPDEs, meaning the so-called minimum contrast estimators, that is applied for a stochastic wave equations, he prove strong consistency and asymptotic normality for this estimator.

We now present the main result of this work. 

\begin{theorem}\label{main-th2}
Under assumptions \eqref{fron} and \eqref{coef} , the MLE $\widehat{\lambda}_{N,T}$ is strongly consistent as $N,T\rightarrow \infty$. Moreover, the next limit holds
\begin{equation*}
\begin{split}
&\lim_{N,T\to \infty} T N^{3/2}(\widehat{\lambda}_{N,T}-\lambda)=
\mathcal{N}\left(0, 12\lambda\right),
\end{split}
\end{equation*}
in distribution.
\end{theorem}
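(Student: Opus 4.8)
The plan is to diagonalize the equation, obtain a closed form for the estimator, and then treat the resulting ratio of a second-chaos numerator and a quadratic denominator separately. Expanding the solution in the Dirichlet eigenbasis $e_k(x)=\sqrt{2/\pi}\,\sin(kx)$ of $\partial_{xx}$ on $(0,\pi)$, the Fourier coefficients $u_k(t)$ solve the decoupled scalar equations $\ddot u_k=-\lambda k^2 u_k+\sigma\dot W_k$, where the $W_k$ are independent standard Brownian motions. Applying Girsanov's theorem to the velocity equation $dv_k=-\lambda k^2 u_k\,dt+\sigma\,dW_k$ and maximizing the log-likelihood in $\lambda$ produces
\[
\widehat\lambda_{N,T}-\lambda=-\,\frac{\sigma\sum_{k=1}^N k^2\int_0^T u_k\,dW_k}{\sum_{k=1}^N k^4\int_0^T u_k^2\,dt}=:-\,\frac{\sigma\,F_{N,T}}{D_{N,T}}.
\]
Since $u_k$ is measurable with respect to $W_k$ alone, both sums run over summands that are \emph{independent across modes}, and this independence is the structural fact I would exploit at every step.

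For strong consistency I would start from the explicit kernel $u_k(t)=\frac{\sigma}{k\sqrt\lambda}\int_0^t\sin\!\big(k\sqrt\lambda\,(t-s)\big)\,dW_k(s)$, which gives $\E\!\int_0^T u_k^2\,dt=\frac{\sigma^2 T^2}{4\lambda k^2}+O(k^{-4})$ once the oscillatory $\sin(2k\sqrt\lambda\,t)$ contribution is integrated out. Hence $\E[D_{N,T}]\sim\frac{\sigma^2 T^2}{4\lambda}\sum_{k\le N}k^2\sim\frac{\sigma^2 T^2 N^3}{12\lambda}$, and, the summands being independent elements of the second chaos, a direct computation of $\Var\!\big(\int_0^T u_k^2\,dt\big)=O(T^4/k^4)$ yields $\Var(D_{N,T})/\E[D_{N,T}]^2=O(1/N)$. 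Thus $D_{N,T}/\E[D_{N,T}]\to1$; combining this with a strong law of large numbers for the martingale-type numerator $F_{N,T}$, whose fluctuations are of order $TN^{3/2}\ll T^2N^3$, gives $\widehat\lambda_{N,T}\to\lambda$ almost surely.

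For the limit law I would normalize $\widetilde F_{N,T}=F_{N,T}/\sqrt{\Var(F_{N,T})}$ and invoke the Malliavin--Stein method. Each $\int_0^T u_k\,dW_k$ is a double Wiener--It\^o integral, so $F_{N,T}$ lives in the second Wiener chaos, with $\Var(F_{N,T})=\sum_k k^4\,\E\!\int_0^T u_k^2\,dt\sim\frac{\sigma^2 T^2 N^3}{12\lambda}$ (recall $\sigma$ has been pulled outside $F_{N,T}$). The fourth-moment bound of Nourdin--Peccati gives, in total-variation or Wasserstein distance, $d\big(\widetilde F_{N,T},\mathcal N(0,1)\big)\le C\,\sqrt{\kappa_4(F_{N,T})}/\Var(F_{N,T})$, and by independence $\kappa_4(F_{N,T})=\sum_k k^8\,\kappa_4\!\big(\int_0^T u_k\,dW_k\big)$. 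Estimating the per-mode cumulant as $O(T^3/k^5)$ — the mode $u_k$ decorrelates on the scale $1/(k\sqrt\lambda)$, so over $[0,T]$ it behaves like a sum of $\sim Tk\sqrt\lambda$ nearly independent blocks — yields $\kappa_4(F_{N,T})=O(T^3 N^4)$, whence the bound is $O\big(1/(\sqrt T\,N)\big)\to0$ and $\widetilde F_{N,T}\to\mathcal N(0,1)$.

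It then remains to assemble the pieces via Slutsky's theorem. Writing
\[
TN^{3/2}\big(\widehat\lambda_{N,T}-\lambda\big)=-\,\sigma\,TN^{3/2}\,\frac{\sqrt{\Var(F_{N,T})}}{D_{N,T}}\,\widetilde F_{N,T},
\]
the in-probability limit $D_{N,T}\big/\big(\sigma^2 T^2N^3/(12\lambda)\big)\to1$ collapses the deterministic prefactor, a short computation giving $\sigma\,TN^{3/2}\sqrt{\Var(F_{N,T})}/D_{N,T}\to\sqrt{12\lambda}$, so the product converges in distribution to $\mathcal N(0,12\lambda)$. I expect the main obstacle to be the uniform control of the oscillatory covariance kernel $\Cov(u_k(s),u_k(t))$, which carries both an amplitude growing like $\min(s,t)$ (the modes are non-stationary, their variance increasing linearly in time) and a persistent oscillation $\cos\!\big(k\sqrt\lambda\,(t-s)\big)$: showing that its iterated $L^2$-contractions decay fast enough to secure the $O(1/N)$ concentration of $D_{N,T}$ and the vanishing of the fourth cumulant of $F_{N,T}$ in the \emph{joint} regime $N,T\to\infty$, rather than in an iterated limit, is the genuinely delicate point.
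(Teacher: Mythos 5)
Your skeleton coincides with the paper's: the same decomposition $\widehat\lambda_{N,T}-\lambda=-\sigma F_{N,T}/D_{N,T}$ (the paper's $\xi_{N,T}$ and $J_{N,T}$), a law of large numbers for the denominator, a second-chaos central limit theorem for the normalized numerator, and Slutsky's theorem. For the CLT the paper gives two arguments --- a classical Lindeberg-condition proof and a Malliavin--Stein proof that bounds $\Var\left(\tfrac12\norm{R_{N,T}D\widehat F_{N,T}}_{\mathcal H}^2\right)$ (Lemma~\ref{conv-variance} combined with Proposition~\ref{Th2.2}) --- and your fourth-moment/cumulant route is the second argument in disguise: for a second-chaos variable $F$ one has $\Var\left(\tfrac12\norm{DF}_{\mathcal H}^2\right)=\tfrac16\kappa_4(F)$, so bounding $\kappa_4(F_{N,T})$ by additivity over independent modes is the paper's Lemma~\ref{conv-variance} computed in different coordinates. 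That part of the plan is sound.

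The genuine problem is your per-mode cumulant estimate $\kappa_4\left(\int_0^T u_k\,dw_k\right)=O(T^3/k^5)$ and the heuristic behind it. The claim that $u_k$ "decorrelates on the scale $1/(k\sqrt\lambda)$" and behaves like $\sim Tk\sqrt\lambda$ nearly independent blocks is false for the undamped oscillator: by \eqref{eq:productuk}, $\E[u_k(t)u_k(s)]=\frac{\sigma^2}{2\ell_k^2}\left[\min(s,t)\cos(\ell_k(t-s))+O(\ell_k^{-1})\right]$, so the covariance never decays in $|t-s|$; it oscillates with an amplitude that grows linearly in time. Computing honestly, $\int_0^T u_k\,dw_k=\mathbb{I}_2(g_k)$ with $g_k(s,t)=\frac{\sigma}{2\ell_k}\sin(\ell_k|t-s|)$ on $(0,T)^2$, and the contraction is $(g_k\otimes_1 g_k)(s,s')=\frac{\sigma^2}{8\ell_k^2}\,(T-2|s-s'|)\cos(\ell_k(s-s'))+O(\ell_k^{-3})$: the oscillation in $r$ only kills the cross terms, not the diagonal one, so $\kappa_4=48\norm{g_k\otimes_1 g_k}^2_{L^2}\asymp T^4/k^4$ --- larger than your claim by a factor of $Tk$. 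Your method still succeeds after this correction, in fact more cleanly: $\kappa_4(F_{N,T})\asymp T^4N^5$, hence $d_{TV}\lesssim\sqrt{T^4N^5}/(T^2N^3)=N^{-1/2}\to 0$, a bound that is uniform in $T$ (the $T$'s cancel), which is exactly what resolves the joint-regime issue you flag at the end and which matches the $N^{-1/2}$ rate the paper obtains in Lemma~\ref{conv-variance}; but your stated rate $O\big(1/(\sqrt{T}\,N)\big)$ is wrong. A second, smaller gap: in the consistency step, $\Var(D_{N,T})/\E[D_{N,T}]^2=O(1/N)$ only gives convergence in probability. For the claimed almost sure convergence, and for it to hold as $N,T\to\infty$ jointly, you need the Kolmogorov-type summability the paper verifies, namely $\sum_N N^8\Var\left(\int_0^T u_N^2\,dt\right)\big/\big(\E J_{N,T}\big)^2\lesssim\sum_N N^{-2}<\infty$ (and the analogous series for the numerator), with constants independent of $T$; it is precisely this $T$-uniformity that upgrades the two iterated limits to the joint one.
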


In \cite{LiuL}, the authors have studied the damped version of \eqref{Ec1}. 
\begin{equation}\label{Ec2}
\frac{\partial^2 u}{\partial t^2}=\lambda_1\,\frac{\partial^2 u}{\partial x^2}
+\lambda_2\,\frac{\partial u}{\partial t} + \sigma \dot{W}(t),\ 0<t<T,\ 0<x<\pi,
\end{equation}
where the same boundary conditions are imposed. But different conditions about the coefficients are  imposed:
\begin{align}
    \label{coef1}\sqrt{\lambda_1} \geq 1,\ \  |\lambda_2|\leq 1;\ \ \sigma> 0.
\end{align}

 We enunciate their result in \cite{LiuL}, observe that asymptotic properties of the estimator are studied  only as the number of the Fourier coefficients increases . 
\begin{theorem}\label{main-th1}
Under assumptions \eqref{fron} and \eqref{coef1}, both estimators are strongly consistent as $N\rightarrow\infty$. Moreover, the next limits hold
\begin{equation*}
\begin{split}
&\lim_{N\to \infty} N^{3/2}(\widehat{\lambda}_{1,N}-\lambda_1)=
\mathcal{N}\left(0, \frac{3\lambda_1}{A(\lambda_2,T)}\right), \\
&\lim_{N\to \infty} N^{1/2}(\widehat{\lambda}_{2,N}-\lambda_2)=
\mathcal{N}\left(0, \frac{1}{A(\lambda_2,T)}\right)
\end{split}
\end{equation*}
in distribution. Where 
\begin{equation}
\label{coef-th2}
A(\lambda_2,T)=
\begin{cases}
\displaystyle \frac{e^{\lambda_2T}-\lambda_2T-1}{2\lambda_2^2},&{\ \rm if}\ \lambda_2\not=0;\\
\displaystyle  \frac{T^2}{4},& {\ \rm if}\ \lambda_2=0,
\end{cases}
\end{equation}
for $T>0$ and $\lambda_2\in \R$; note that $A(\lambda_2,T)>0$ for all $T>0$ and $\lambda_2\in \R$. In particular without presence of damping, the next limits hold
$$
\lim_{N\to \infty}\widehat{\lambda}_{N}=\lambda
$$
with probability one and
\begin{equation*}
\begin{split}
&\lim_{N\to \infty} N^{3/2}(\widehat{\lambda}_{N}-\lambda)=
\mathcal{N}\left(0, \frac{12\lambda}{ T^2}\right)
\end{split}
\end{equation*}
in distribution.
\end{theorem}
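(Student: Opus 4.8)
The plan is to run the spectral maximum-likelihood programme for the two-dimensional parameter. First I would diagonalize the spatial operator: with the Dirichlet eigenbasis $h_k(x)=\sqrt{2/\pi}\,\sin(kx)$ and eigenvalues $-k^2$, the Fourier modes $u_k(t)=\inner{u(t,\cdot)}{h_k}$ and their velocities $v_k(t)=\dot u_k(t)$ solve the decoupled family of linear SDEs $du_k=v_k\,dt$, $dv_k=(-\lambda_1 k^2 u_k+\lambda_2 v_k)\,dt+\sigma\,dW_k$ with independent standard Brownian motions $W_k$ and vanishing initial data. Observing $u_1,\dots,u_N$ on $[0,T]$ is equivalent to observing these $N$ independent diffusions, and Girsanov's theorem yields the log-likelihood; differentiating in $(\lambda_1,\lambda_2)$ and setting the score to zero gives a symmetric $2\times2$ linear system with coefficient matrix $\mathbf A_N$ having entries $A_{11}=\sum_{k\le N}k^4\int_0^T u_k^2\,dt$, $A_{12}=A_{21}=-\sum_{k\le N}k^2\int_0^T u_k v_k\,dt$, $A_{22}=\sum_{k\le N}\int_0^T v_k^2\,dt$. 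Substituting the true dynamics for $dv_k$ produces the error representation $\widehat{\bld\lambda}_N-\bld\lambda=\sigma\,\mathbf A_N^{-1}\mathbf m_N$, where $\mathbf m_N=\big(-\sum_{k\le N}k^2\int_0^T u_k\,dW_k,\ \sum_{k\le N}\int_0^T v_k\,dW_k\big)^{\top}$ is a martingale with predictable quadratic variation exactly $\mathbf A_N$. The whole theorem then reduces to the growth of $\mathbf A_N$ and a central limit theorem for $\mathbf m_N$.

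Second, I would pin down the asymptotics of $\mathbf A_N$. With the Green's function $g_k(t)=\nu_k^{-1}e^{\lambda_2 t/2}\sin(\nu_k t)$, $\nu_k=\sqrt{\lambda_1 k^2-\lambda_2^2/4}$, one has $u_k(t)=\sigma\int_0^t g_k(t-s)\,dW_k(s)$ and $v_k=\sigma\int_0^t\dot g_k(t-s)\,dW_k(s)$, so the needed second moments are explicit oscillatory integrals. Averaging the $\sin^2,\cos^2$ factors gives $\int_0^T\E[u_k^2]\,dt\sim \sigma^2A(\lambda_2,T)/(\lambda_1 k^2)$, $\int_0^T\E[v_k^2]\,dt\sim\sigma^2A(\lambda_2,T)$, and $\int_0^T\E[u_kv_k]\,dt=\tfrac12\E[u_k(T)^2]=O(k^{-2})$, with $A(\lambda_2,T)$ precisely the constant in \eqref{coef-th2}. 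Summation over $k\le N$ then yields $\E[A_{11}]\sim\sigma^2A(\lambda_2,T)N^3/(3\lambda_1)$, $\E[A_{22}]\sim\sigma^2A(\lambda_2,T)N$, and $\E[A_{12}]=O(N)$. The decisive observation is that conjugating by the scaling matrix $\mathbf D_N=\mathrm{diag}(N^{3/2},N^{1/2})$ diagonalizes the limit: $\mathbf D_N^{-1}\mathbf A_N\mathbf D_N^{-1}$ has diagonal entries $A_{11}/N^3\to c_{11}:=\sigma^2A(\lambda_2,T)/(3\lambda_1)$ and $A_{22}/N\to c_{22}:=\sigma^2A(\lambda_2,T)$, while the off-diagonal $A_{12}/N^2=O(N^{-1})\to0$. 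In the rescaled coordinates the two parameters decouple, which is the reason for the two distinct rates $N^{3/2}$ and $N^{1/2}$.

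Third, I would prove the two conclusions. For strong consistency, the entries of $\mathbf A_N$ are sums of independent summands in $k$, so Kolmogorov's strong law gives $\mathbf D_N^{-1}\mathbf A_N\mathbf D_N^{-1}\to\mathrm{diag}(c_{11},c_{22})$ a.s.; the martingale strong law gives $m_i/\langle m_i\rangle=m_i/A_{ii}\to0$ a.s., hence $m_1=o(N^3)$ and $m_2=o(N)$ a.s. Expanding each component of $\sigma\mathbf A_N^{-1}\mathbf m_N$ and using $(\mathbf A_N^{-1})_{11},(\mathbf A_N^{-1})_{12}=O(N^{-3})$, $(\mathbf A_N^{-1})_{22}=O(N^{-1})$ shows $\widehat{\bld\lambda}_N\to\bld\lambda$ a.s. For asymptotic normality I would write $\mathbf D_N(\widehat{\bld\lambda}_N-\bld\lambda)=\sigma\,(\mathbf D_N^{-1}\mathbf A_N\mathbf D_N^{-1})^{-1}(\mathbf D_N^{-1}\mathbf m_N)$ and apply a central limit theorem to $\mathbf D_N^{-1}\mathbf m_N=\sum_{k\le N}\xi_{k,N}$, a triangular array of vectors independent in $k$ with summed covariance converging to $\mathrm{diag}(c_{11},c_{22})$; the Lyapunov condition follows from the finite fourth moments of the second-chaos integrals (one checks $\sum_{k\le N}\E|\xi_{k,N}|^4=O(N^{-1})$). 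Slutsky's lemma combined with the a.s. limit of the information matrix gives $\mathbf D_N(\widehat{\bld\lambda}_N-\bld\lambda)\to\mathcal N\!\big(0,\sigma^2\,\mathrm{diag}(c_{11}^{-1},c_{22}^{-1})\big)$; reading off the diagonal entries produces exactly $\mathcal N(0,3\lambda_1/A(\lambda_2,T))$ at rate $N^{3/2}$ and $\mathcal N(0,1/A(\lambda_2,T))$ at rate $N^{1/2}$, and specializing to $\lambda_2=0$ with $A(0,T)=T^2/4$ gives the undamped limit $12\lambda/T^2$.

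The hard part will be step two: extracting the precise leading constants and controlling the oscillatory remainders uniformly in $k$. Because the $A_{11}$ entry is weighted by $k^4$ before summation, any remainder of the wrong order in the $\sin^2$-averaging would corrupt both the exponent $N^3$ and the constant $A(\lambda_2,T)$, so one must show these remainders are genuinely lower order and summable after weighting; the non-stationary factor $e^{\lambda_2 t}$ and the cross term $\tfrac{\lambda_2}{2\nu_k}\sin(\nu_k t)$ in $\dot g_k$ demand particular care. A secondary technical point is the vector central limit theorem for $\mathbf D_N^{-1}\mathbf m_N$, whose increments are quadratic (second-chaos) functionals rather than Gaussian; the Lindeberg--Feller route via fourth moments sketched above suffices, and alternatively the multivariate fourth-moment (Malliavin--Stein) theorem employed elsewhere in the paper delivers the same Gaussian limit together with an explicit rate of convergence.
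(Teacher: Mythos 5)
Your proposal is correct and follows essentially the same route as the source: the paper itself only quotes this theorem from Liu--Lototsky \cite{LiuL} without proof, and both that proof and the paper's own proof of the analogous single-parameter result (Theorem \ref{th:NT1}) proceed exactly as you do --- spectral diagonalization, the Girsanov likelihood with the error representation through the (here $2\times 2$) information matrix, oscillatory moment asymptotics producing the constant $A(\lambda_2,T)$, Kolmogorov and martingale strong laws for consistency, and a Lindeberg--Feller fourth-moment argument plus Slutsky for the normality at the rates $N^{3/2}$ and $N^{1/2}$. Your scaling $\mathbf{D}_N=\mathrm{diag}(N^{3/2},N^{1/2})$, the limits $c_{11}=\sigma^2 A(\lambda_2,T)/(3\lambda_1)$ and $c_{22}=\sigma^2 A(\lambda_2,T)$, and the vanishing rescaled off-diagonal entry reproduce the stated asymptotic variances $3\lambda_1/A(\lambda_2,T)$ and $1/A(\lambda_2,T)$ exactly, including the undamped specialization $A(0,T)=T^2/4$ yielding $12\lambda/T^2$.
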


Observe that in the Theorem \ref{main-th1} we have only the limit when $N$ goes to infinity, while in our main result, Theorem \ref{main-th2}, the result is proved when both, $N$ and $T$, go to infinity. We prove the asymptotic normality, of our main result, with the use of the so-called Malliavin-Stein's approach.

The paper is organized as follows. In section \ref{sect-framework}, we introduce the basic notions of the  solution to \eqref{Ec1}. Then in section \ref{sect-MLE} we prove the main result using classical stochastic analysis and in the subsection \ref{sect-MSap} we prove the asymptotic normality using the Malliavin-Stein's approach. In section \ref{sect-Discrete} we introduce the discretized version of the MLE  using the first  measured $N$ Fourier modes of the solution at $M$ fixed time grid points uniformly spaced over the time interval $[0,T]$. We study a weak asymptotic properties; first we prove the discretized MLE $\widehat{\lambda}_{N,M}$ is weakly consistent as $N,M\rightarrow \infty$ and finally putting  assumptions  over $N,M$ we prove the asymptotic normality of $\widehat{\lambda}_{N,M}$ with the same rate  of convergence seen on \ref{main-th1}. And to conclude we illustrate the theoretical results with some numerical experiments.

\section{Framework}
\label{sect-framework}

Let $(\Omega, \mathscr{F}, \{\mathscr{F}_t\}_{t\geq 0},\mathbb{P})$ be  a filtered  probability space, where the filtration $\{\mathscr{F}_t\}_{t\geq 0}$ satisfies the usual conditions. We consider a cylindrical Brownian motion over $L_2((0,\pi))$ as follows. 
A cylindrical Brownian
motion $W=W(t)$, $t\geq 0$, over the Hilbert space $H=L_2((0,\pi))$ is a linear mapping
$$
W: f\mapsto W_f(\cdot)
$$
from
$H$ to the space of zero-mean Gaussian processes such that, for every
$f,g\in H$ and $t,s>0$, $\{W_f (t)\}_{t\geq0 }$ is a one-dimensional Brownian motion  and
\begin{equation}
\label{CondicionBrown}
\bE\big(W_f(t)W_g(s)\big)=\min(t,s)\inner{f}{g}_H.
\end{equation}
Note that a cylindrical Brownian motion can be represented as a $\mathbb{P}$-a.s. convergent series
$$W_f (t) = \sum_{k\geq 1} \inner{f}{e_k}_H W_{e_k}(t)$$
where $\{e_k,\ k\geq 1\}$ is an orthonormal basis in $H$, and $\{W_{e_k}(t)\}_{k=1}^\infty$ is a collection of real independent standard  Brownian motions since the $\{e_k\}_{k\geq 1}$ are  orthonormal.

The equation \eqref{Ec1} can be interpreted  as a system of two first-order
It\^{o}'s equations
\begin{equation}
du=vdt,\ dv=\lambda u_{xx}dt+\sigma dW(t).
\end{equation}

For $\gamma\in \R$, define the Hilbert space $H^{\gamma}$ as the closure
of the set of smooth compactly supported functions on $(0,\pi)$
with respect to the norm
\begin{equation}
\label{SobSp}
\|f\|_{\gamma}=\left(\sum_{k\geq1} k^{2\gamma}f_k^2\right)^{1/2},
\end{equation}
where $f_k=\sqrt{\frac{2}{\pi}}\int_0^{\pi} f(x)\sin(kx)dx$. Note that  each of the functions $\sin(kx)$ belongs to every $H^{\gamma}$, and
if $f$ is  twice continuously-differentiable on $(0,\pi)$ with $f(0)=f(\pi)=0$, then  $f\in H^{1}$. More generally, every  $f\in H^{\gamma}$ can be identified with a sequence $\{f_k,\ k\geq 1\}$ of real
numbers such that $\sum_{k\geq 1}k^{2\gamma}f_k^2<\infty$.

Given $\gamma> 0$, $f\in H^{-\gamma}$ and
$g\in H^{\gamma}$, we define 
$$
\inner{f}{g}=\sum_{k\geq 1} f_kg_k;
$$
if $f,g\in L_2((0,\pi))$, then
$$
\inner{f}{g}=\int_0^{\pi} f(x)g(x)dx.
$$
In other words, $\inner{\cdot}{ \cdot}$ is the duality between $H^{\gamma}$ and $H^{-\gamma}$
relative to the inner product in $H^0=L_2((0,\pi))$. Be aware that 
$$H^{\gamma} \subset L_2 ((0,\pi))\subset H^{-\gamma},$$
where all are embeddings.

First we write the definition of a  solution to \eqref{Ec1}.

\begin{definition}
\label{def-sol}
An adapted process $u\in L_2\big(\Omega\times(0,T)\times (0,\pi)\big)$
is called  solution of \eqref{Ec1} if there
exists an adapted process $v$ such that
\begin{enumerate}
\item $v\in L_2\big(\Omega; L_2((0,T);H^{-1})\big)$;
\item For every twice continuously-differentiable on $(0,\pi)$ function $f=f(x)$
with $f(0)=f(\pi)=0$, the equalities
\begin{equation}
\label{SWE3}
\begin{split}
\inner{u(t,\cdot)}{f}&=\int_0^t \inner{v(t,\cdot)}{f}(s)ds,\\
\inner{v(t,\cdot)}{f}&=\int_0^t \lambda \inner{u(t,\cdot)}{f''}ds+W_f(t)
\end{split}
\end{equation}
hold for all $t\in [0,T]$ on the same set of probability one.
\end{enumerate}
\end{definition}

A theorem on the existence of the solution to the wave equation can be found in several books, see for instance \cite[Theorem 6.8.4]{Chow}, \cite[Theorem 4.3.3]{LoR}. However, for our purpose it is better to know the explicit  solutions to the system of the equations in order to study the properties of the estimators. Therefore, we enunciate the  explicit solutions for the case without damping founded on \cite{LiuL}.

\begin{theorem}\cite[Theorem 2.1]{LiuL}
\label{exis}
Under assumptions \eqref{coef} and \eqref{fron}, equation \eqref{Ec1}
has a unique solution and, for every $\gamma<1/2$,
\begin{equation}
\label{espacios}
u \in {L}_2\big(\Omega;L_2((0,T);{H}^{\gamma})\big);\ \
v\in  {L}_2\big(\Omega; L_2((0,T);{H}^{\gamma-1})\big).
\end{equation}
Moreover, $u$ and $v$ have the next representation 
\begin{equation}
\label{SOL}
u(t,x)=\sqrt{\frac{2}{\pi}}\sum_{k\geq 1} u_k(t)\sin(kx),\ \
v(t,x)= \sqrt{\frac{2}{\pi}}\sum_{k\geq 1} v_k(t)\sin(kx),
\end{equation}
where
\begin{equation}
\label{Osc3}
\begin{split}
u_k(t)&=\frac{\sigma }{\ell_k}\int_0^t\sin\big(\ell_k(t-s)\big)dw_k(s),\\
v_k(t)&=\frac{\sigma }{\ell_k}\int_0^t
\ell_k\cos\big(\ell_k(t-s)\big) dw_k(s),
\end{split}
\end{equation}
where $\ell_k=\sqrt{\lambda}k$.
\end{theorem}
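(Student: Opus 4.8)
The plan is to exploit the fact that \eqref{Ec1} is a \emph{linear} SPDE whose spatial operator is diagonalized by the orthonormal basis $e_k(x) = \sqrt{2/\pi}\,\sin(kx)$, $k\geq 1$, of $L_2((0,\pi))$. These are the Dirichlet eigenfunctions of $\partial_{xx}$ with eigenvalues $-k^2$, and the boundary conditions in \eqref{fron} are exactly the ones that make them admissible. First I would decompose the cylindrical noise as $W(t) = \sum_{k\geq 1} w_k(t)\,e_k$, where $\{w_k\}_{k\geq 1}$ is a family of independent standard Brownian motions, and set $u_k(t) = \inner{u(t,\cdot)}{e_k}$, $v_k(t) = \inner{v(t,\cdot)}{e_k}$. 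Testing the weak formulation \eqref{SWE3} against $f = e_k$ and integrating by parts twice (the boundary terms vanish by \eqref{fron}) turns the single SPDE into a countable family of \emph{decoupled} scalar linear systems
\begin{equation*}
du_k = v_k\,dt,\qquad dv_k = -\lambda k^2 u_k\,dt + \sigma\,dw_k(t),\qquad u_k(0)=v_k(0)=0,
\end{equation*}
one for each mode $k$, with $\ell_k^2 = \lambda k^2$.

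The second step is to solve each of these systems explicitly. Each is a stochastically forced harmonic oscillator with frequency $\ell_k = \sqrt{\lambda}\,k$, whose homogeneous flow is the fundamental matrix
\begin{equation*}
\Phi_k(t) = \begin{pmatrix} \cos(\ell_k t) & \ell_k^{-1}\sin(\ell_k t) \\ -\ell_k\sin(\ell_k t) & \cos(\ell_k t)\end{pmatrix}.
\end{equation*}
Applying the stochastic variation-of-constants (Duhamel) formula with forcing vector $(0,\sigma)^{\top}$ and zero initial data yields precisely the representation \eqref{Osc3}; I would then read off that $u$ and $v$ are given by the Fourier series \eqref{SOL}, reducing the construction of the solution to a convergence question for those series.

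The third step, which carries the real analytic content, is to verify the claimed regularity and thereby the convergence of the series. Using the It\^o isometry on \eqref{Osc3} gives, for each fixed $t$,
\begin{equation*}
\E\,|u_k(t)|^2 = \frac{\sigma^2}{\ell_k^2}\int_0^t \sin^2\!\big(\ell_k(t-s)\big)\,ds \leq \frac{\sigma^2 t}{\lambda k^2},\qquad \E\,|v_k(t)|^2 = \sigma^2\int_0^t \cos^2\!\big(\ell_k(t-s)\big)\,ds \leq \sigma^2 t.
\end{equation*}
Weighting by $k^{2\gamma}$ (resp.\ $k^{2(\gamma-1)}$) as in the norm \eqref{SobSp} and summing, both $\E\,\|u(t,\cdot)\|_{\gamma}^2$ and $\E\,\|v(t,\cdot)\|_{\gamma-1}^2$ are controlled by $\sum_{k\geq 1} k^{2\gamma-2}$, which is finite precisely when $\gamma < 1/2$. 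Integrating in $t$ over $(0,T)$ then delivers \eqref{espacios}. I expect this threshold $\gamma<1/2$ to be the crux of the statement: it is where the spatial roughness of the white-in-space noise in dimension one surfaces, and it is the one genuinely nontrivial estimate, everything before it being the explicit solution of a linear system.

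Finally, for uniqueness I would argue mode by mode: any solution in the sense of Definition \ref{def-sol} has Fourier coefficients satisfying the same decoupled linear systems above, and linear (hence Lipschitz) SDEs have pathwise-unique solutions, so the coefficients—and therefore $u$ and $v$—are determined uniquely. The remaining routine points are to check that the series \eqref{SOL} genuinely satisfies the integral identities \eqref{SWE3} (interchanging the sum with the time and stochastic integrals, justified by the $L_2$-convergence just established) and that $v$ lies in $L_2\big(\Omega;L_2((0,T);H^{-1})\big)$ as required by Definition \ref{def-sol}, which is the $\gamma=0$ instance of the bound already obtained.
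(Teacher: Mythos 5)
Your proposal is correct: the diagonalization onto the Dirichlet eigenbasis $\sqrt{2/\pi}\,\sin(kx)$, the Duhamel (variation-of-constants) solution of each stochastically forced oscillator yielding \eqref{Osc3}, the It\^{o}-isometry computation that produces the threshold $\gamma<1/2$ in \eqref{espacios}, and the mode-by-mode pathwise uniqueness argument together constitute a sound proof of Theorem \ref{exis}. Note that the paper itself contains no proof of this statement---it is imported verbatim from \cite[Theorem 2.1]{LiuL}---but your route is essentially the same spectral argument used in that source and is exactly the framework the paper itself deploys later (the decoupled systems \eqref{Osc1} and \eqref{system} in Section \ref{sect-MLE}), so the two approaches agree.
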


\begin{remark}
Observe that  $\{u_k(t)\}_{ k\ge 1}$ constitutes a sequence of independent Gaussian process as well as $\{v_k(t)\}_{ k\ge 1}$, since $\{w_k(t)\}_{ k\ge 1}$ is indeed. This will be important for some calculations to obtain the main result of this paper.
\end{remark}

\section{Maximum Likelihood Estimators}
\label{sect-MLE}
In this section we will investigate the MLE for the  parameter $\lambda$, in particular to  study its asymptotic properties.

In \eqref{SWE3} define $f_k (x)=\sqrt{2/\pi} \sin(kx)$ and note that
$u_k(t)=\inner{u(t,\cdot)}{f_k}$, $v_k(t)=\inner{v(t,\cdot)}{f_k}$, $w_k=W_f$. Then
\begin{equation}
\label{Osc1}
u_k(t)=\int_0^tv_k(s)ds,\ v_k(t)=-\lambda k^2\int_0^tu_k(s)ds+\sigma w_k(t),
\end{equation}
or
\begin{equation}
\label{Osc2}
\ddot{u}_k(t)+\lambda_2 \dot{u}_k(t)+\lambda k^2u_k(t)=\sigma \dot{w}_k(t),
 \ \ u_k(0)=\dot{u}_k(0)=0.
\end{equation}


By \eqref{Osc1}, we have
\begin{equation}
\label{system}
u_k(t)=\int_0^tv_k(s)ds,\ v_k(t)=-\lambda k^2\int_0^tu_k(s)ds+\sigma w_k(t).
\end{equation}

For each $k\geq 1$, the processes $u_k$, $v_k$, and $w_k$  generate measures
$\mathbf{P}^u_k$, $\mathbf{P}^v_k$, $\mathbf{P}^w_k$ in the
space $\mathcal{C}((0,T);\R)$
of continuous, real-valued functions on $[0,T]$.
Since $u_k$ is a continuously-differentiable function, the
measures $\mathbf{P}^u_k$ and
$\mathbf{P}^w_k$ are mutually singular. On the other hand, we can write
\begin{equation}
\label{v-e}
dv_k(t)=F_k(v)dt+\sigma dw_k,
\end{equation}
where
$F_k(v)=-\lambda_1k^2\int_0^tv_k(s)ds$ is a
non-anticipating functional of $v$.
Thus, the process $v$ is a process of diffusion type. Further analysis shows
that the measure $\mathbf{P}^v_k$ is
absolutely continuous with respect to the measure $\mathbf{P}^w_k$, and using  \cite[Theorem 7.6]{LSh1}
\begin{equation}
\label{density1-e}
\begin{split}
\frac{d\mathbf{P}^v_k}{d\mathbf{P}^w_k}(v_k)&=\exp\Bigg(\frac{1}{\sigma^2}\int_0^T\big(-\lambda k^2u_k(t)+\lambda_2 v_k(t)\big)dv_k(t)\\
&\hspace{.4cm} -\frac{1}{2\sigma^2}
\int_0^T\big(-\lambda k^2u_k(t)+\lambda_2v_k(t)\big)^2dt\Bigg).
\end{split}
\end{equation}
Since the processes $w_k$ are independent for different $k$, so are the processes $v_k$.
Therefore, the measure $\mathbf{P}^{v,N}$ generated in $\mathcal{C}((0,T);\R^N)$
by the vector process
$\{v_k,\ k=1,\ldots,N\}$ is absolutely continuous with respect to the
 measure $\mathbf{P}^{w,N}$ generated in $\mathcal{C}((0,T);\R^N)$
by the vector process
$\{w_k,\ k=1,\ldots,N\}$, and the density is
\begin{equation}
\label{density2-e}
\begin{split}
\frac{d\mathbf{P}^{v,N}}{d\mathbf{P}^{w,N}}(v_k)&=\exp\Bigg(\frac{1}{\sigma^2}\sum_{k=1}^N\int_0^T-\lambda k^2u_k(t)dv_k(t)\\
&-\frac{1}{2\sigma^2}
\sum_{k=1}^N\int_0^T\big(-\lambda k^2u_k(t)\big)^2dt\Bigg);
\end{split}
\end{equation}
the corresponding log-likelihood ratio is
\begin{equation}
\label{log-lik}
\begin{split}
Z_{N,T}(\lambda)&=\frac{1}{\sigma^2}\sum_{k=1}^N\Bigg(\int_0^T-\lambda k^2u_k(t)dv_k(t)-\frac{1}{2\sigma^2}
\int_0^T\lambda^2 k^4u_k^2dt\Bigg).
\end{split}
\end{equation}
Introduce the following notations:
\begin{equation}
\label{notations}
\begin{split}
&J_{N,T}=\sum_{k=1}^N k^4\int_0^T u_k^2(t)dt,\\  
&B_{N,T}=-\sum_{k=1}^N k^2 \int_0^Tu_k(t)dv_k(t),\ \xi_{N,T}=\sum_{k=1}^N k^2 \int_0^Tu_k(t)dw_k(t).
\end{split}
\end{equation}
Be aware that the numbers $J$ and $B$ are computable from the observations of $u_k$ and $v_k$, $k=1,
\ldots, N$, and also   we have
\begin{align}
\label{ident-B}  B_{N,T}=\lambda J_{N,T}-\sigma\xi_{N,T}.
\end{align}
We consider the problem of estimating
$\lambda$  from the observations
$
\{u_k(t),\ v_k(t) : k=1,\ldots, N,\ t\in [0,T]\}.
$ Then the corresponding log-likelihood ratio is
\begin{equation}
\label{log-lik0}
\begin{split}
Z_{N,T}(\lambda)&=\frac{1}{\sigma^2}\Big(\lambda B_{N,T}-\frac{\lambda^2}{2} J_{N,T}\Big).
\end{split}
\end{equation}
From this expression, we get the estimator
\begin{equation}\label{estimator0}
    \widehat{\lambda}_{N,T}=\frac{B_{N,T}}{J_{N,T}}.
\end{equation}
Moreover, we can compute the Fisher information related to $\frac{d\mathbf{P}^{v,N}}{d\mathbf{P}^{w,N}}(v_k)$.
For simplicity, set $u(0)=0$. Namely,
\begin{align*}
  \mathcal{I}_{N,T} & := \int \left|  \frac{\partial}{\partial \lambda} \log \frac{d\mathbf{P}^{v,N}}{d\mathbf{P}^{w,N}} \right|^2 d\mathbf{P}^{w,N} \\
  & = - \int  \frac{\partial^2}{\partial \lambda^2} \log \frac{d\mathbf{P}^{v,N}}{d\mathbf{P}^{w,N}} d\mathbf{P}^{w,N}  \\
  & = \frac{1}{\sigma^2} \bE[J_{N,T}]=\frac{1}{\sigma^2}\sum_{k=1}^N k^4 \int_0^T\bE [u_k^2 (t)] dt\\
  &=\frac{1}{\sigma^2}\sum_{k=1}^N k^4 \frac{\sigma^2}{\ell_k^2}\left(\frac{T^2}{4}+\frac{\cos(2\ell_k T)-1}{8\ell_k^2}\right)\\
  &\simeq N^3 \frac{T^2}{12\lambda} ,\quad \mbox{as}\ N\to \infty.
\end{align*}
In particular, be aware that $\mathcal{I}_{N,T}\to\infty$ as $N\to\infty$ when $T$ is fixed, or $N,T\rightarrow \infty$ .

 From straightforward calculations we get

\begin{align}
    \bE u_k^2(t)&=\frac{\sigma^2}{\ell_k^2}\left(\frac{t}{2}-\frac{\sin(2\ell_k t)}{4\ell_k}\right) \label{eq:2ndmomentuk}, \qquad
    \bE v_k^2(t)&=
    \frac{\sigma^2}{2 }t-\frac{\sigma^2}{2}\frac{\sin (\ell_k t)}{2\ell_k}.
\end{align}
Also, we have the next asymptotic behavior of the second moment.

\begin{lemma}\label{2momentl2}
From \ref{Osc3}, we have the following limits
\begin{align*}
    \lim_{k\rightarrow \infty}k^2 \bE u_k^2(t)&=
    \frac{\sigma^2}{\lambda}\frac{t}{2},\qquad
    \lim_{k\rightarrow \infty} \bE v_k^2(t)=\frac{\sigma^2}{2 }t.
\end{align*}
Moreover, we have
\begin{align*}
    \lim_{k\rightarrow \infty} k^2 \bE \int_0^T u_k^2(t) dt=\frac{\sigma^2 T^2}{4\lambda}, \qquad
    \lim_{k\rightarrow \infty}  \bE \int_0^T v_k^2(t)dt=\frac{\sigma^2 T^2}{4}.
\end{align*}
\end{lemma}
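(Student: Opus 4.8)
The plan is to derive all four limits directly from the closed-form second-moment expressions in \eqref{eq:2ndmomentuk}, using only that $\ell_k=\sqrt{\lambda}\,k\to\infty$ as $k\to\infty$ (from Theorem \ref{exis}) together with the boundedness $|\sin|,|\cos|\le 1$. Because every quantity involved is given explicitly, no interchange of limit and integral and no dominated-convergence argument is needed: I can evaluate the time integrals in closed form first and take the limit afterwards.

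First, for the pointwise statement about $u_k$, I recall that $\ell_k^2=\lambda k^2$, so $k^2/\ell_k^2=1/\lambda$, and multiply the first identity in \eqref{eq:2ndmomentuk} by $k^2$ to obtain
\[
k^2\,\bE u_k^2(t)=\frac{\sigma^2}{\lambda}\left(\frac{t}{2}-\frac{\sin(2\ell_k t)}{4\ell_k}\right).
\]
The oscillatory term is bounded in absolute value by $\sigma^2/(4\lambda\ell_k)$, which tends to $0$, so $k^2\bE u_k^2(t)\to \frac{\sigma^2}{\lambda}\frac{t}{2}$. For $v_k$, the second identity in \eqref{eq:2ndmomentuk} already has the form $\frac{\sigma^2}{2}t$ minus an oscillatory remainder bounded by $\sigma^2/(4\ell_k)\to 0$, giving $\bE v_k^2(t)\to\frac{\sigma^2}{2}t$.

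For the time-integrated claims I integrate the exact expressions over $[0,T]$ before passing to the limit. Integrating the $u_k$-identity yields
\[
\int_0^T\bE u_k^2(t)\,dt=\frac{\sigma^2}{\ell_k^2}\left(\frac{T^2}{4}-\frac{1-\cos(2\ell_k T)}{8\ell_k^2}\right),
\]
so that $k^2\int_0^T\bE u_k^2(t)\,dt=\frac{\sigma^2}{\lambda}\big(\tfrac{T^2}{4}-\tfrac{1-\cos(2\ell_k T)}{8\ell_k^2}\big)$, and the remainder, of order $\ell_k^{-2}$, vanishes as $k\to\infty$; this produces the value $\frac{\sigma^2 T^2}{4\lambda}$. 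The computation for $v_k$ is identical in structure: integrating its second moment gives the deterministic term $\frac{\sigma^2 T^2}{4}$ plus a remainder of order $\ell_k^{-2}$ that disappears in the limit.

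Since the argument is essentially bookkeeping built on explicit formulas, I do not expect a genuine obstacle. The only point requiring a word of care is that each oscillatory contribution be controlled uniformly in $t\in[0,T]$ for the pointwise claims, and after exact integration for the integrated claims; both follow immediately from $|\sin|,|\cos|\le 1$ together with $\ell_k\to\infty$.
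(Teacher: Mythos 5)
Your proposal is correct and takes essentially the same route as the paper, which states this lemma as an immediate consequence of the closed-form second moments in \eqref{eq:2ndmomentuk}: cancelling $k^2/\ell_k^2=1/\lambda$, integrating the exact expressions over $[0,T]$, and discarding the oscillatory remainders of order $\ell_k^{-1}$ and $\ell_k^{-2}$ is precisely the ``straightforward calculation'' the paper has in mind. Indeed, your integrated expression $\frac{\sigma^2}{\ell_k^2}\bigl(\frac{T^2}{4}-\frac{1-\cos(2\ell_k T)}{8\ell_k^2}\bigr)$ coincides with the formula the paper itself uses in its Fisher-information computation.
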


And using a basic result from convergent series and the last lemma we get the next corollary.
\begin{corollary}\label{lem-1}
The following limits hold
\begin{align*}
    \lim_{N\rightarrow \infty} \frac{1}{N^3}\sum_{k=1}^N k^2 \bE \int_0^T u_k^2(t) dt=\frac{\sigma^2 T^2}{12\lambda}, \qquad
    \lim_{k\rightarrow \infty} \frac{1}{N}\sum_{k=1}^N  \bE \int_0^T v_k^2(t)dt=\frac{\sigma^2 T^2}{4}.
\end{align*}
\end{corollary}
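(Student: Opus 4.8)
The plan is to derive both identities as Cesàro-type averages of the pointwise-in-$k$ limits supplied by Lemma \ref{2momentl2}; the only additional ingredient is the elementary principle that an averaged convergent sequence converges to the same limit, which I would invoke in the form of the Stolz--Cesàro theorem.

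The second identity is the more direct of the two. Setting $d_k=\bE\int_0^T v_k^2(t)\,dt$, Lemma \ref{2momentl2} gives $d_k\to \sigma^2T^2/4$, and the classical Cesàro mean theorem (if $d_k\to D$ then $\frac1N\sum_{k=1}^N d_k\to D$) yields $\frac1N\sum_{k=1}^N d_k\to \sigma^2T^2/4$ at once.

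For the first identity I would write $c_k=k^2\,\bE\int_0^T u_k^2(t)\,dt$, so that Lemma \ref{2momentl2} gives $c_k\to C:=\sigma^2T^2/(4\lambda)$ and the summand is $k^2c_k=k^4\,\bE\int_0^T u_k^2(t)\,dt$ (the same quantity that appears in $\bE[J_{N,T}]$). Applying Stolz--Cesàro to $A_N=\sum_{k=1}^N k^2 c_k$ and $B_N=N^3$, which is strictly increasing to $\infty$, reduces the limit of $A_N/B_N$ to $\lim_{N}(A_N-A_{N-1})/(B_N-B_{N-1})=\lim_N N^2c_N/(3N^2-3N+1)=C/3=\sigma^2T^2/(12\lambda)$. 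Equivalently, one may split $c_k=C+\varepsilon_k$ with $\varepsilon_k\to0$, use $\frac1{N^3}\sum_{k=1}^N k^2=\frac{(N+1)(2N+1)}{6N^2}\to\frac13$ for the leading term, and control $\frac1{N^3}\sum_{k=1}^N k^2\varepsilon_k$ by fixing a threshold beyond which $|\varepsilon_k|$ is small and bounding the initial block by $O(1/N^3)$.

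I do not anticipate any genuine obstacle: all the analytic content has already been absorbed into Lemma \ref{2momentl2}, and what is left is routine averaging. The only place demanding a little attention is matching the weight $k^2$ carried by the summand against the normalization $N^3$—recognizing that $\sum_{k=1}^N k^2\sim N^3/3$ is what produces the factor $1/3$ and hence turns $\sigma^2T^2/(4\lambda)$ into $\sigma^2T^2/(12\lambda)$.
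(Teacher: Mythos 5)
Your proof is correct and is essentially the paper's own argument: the paper justifies this corollary precisely by "a basic result from convergent series" applied to Lemma \ref{2momentl2}, which is exactly the Ces\`aro/Stolz--Ces\`aro averaging you carry out in detail (including the $\sum_{k=1}^N k^2 \sim N^3/3$ weight that converts $\sigma^2T^2/(4\lambda)$ into $\sigma^2T^2/(12\lambda)$). One remark: you have silently, and correctly, taken the first summand to be $k^4\,\bE\int_0^T u_k^2(t)\,dt$ rather than the literal $k^2\,\bE\int_0^T u_k^2(t)\,dt$ printed in the corollary; the printed weight is a typo (with it the limit after dividing by $N^3$ would be $0$), and your reading is the one consistent with the paper's Fisher-information computation and with \eqref{asympSumVarxi}.
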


We will focus on the main result in this section. To the best of our knowledge, the asymptotic properties of $\widehat{\lambda}_{N,T}$ have not studied when $N,T \rightarrow\infty$ (\textbf{both})\footnote{This regime is called long time and large space.}.

Now, we can prove the asymptotic properties  of the estimator $ \widehat{\lambda}_{N,T}$ in the new regime.
\begin{theorem}\label{th:NT1}
The estimator $\widehat{\lambda}_{N,T}$ is strongly consistent, that is,
\begin{equation}
\lim_{N,T \to\infty} \widehat{\lambda}_{N,T} = \lambda,   \ \  \textrm{with probability one}, \label{eq:ConsNT}
\end{equation}
and asymptotically normal, i.e. , 
\begin{equation}
\w\lim_{N,T\rightarrow \infty} TN^{\frac{3}{2}} \left(\widehat{\lambda}_{N,T} -\lambda\right)= \mathcal{N}\left(0,12\lambda\right). \label{eq:NTAsymNormal1}
\end{equation}
\end{theorem}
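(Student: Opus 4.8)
The plan is to build both parts of the theorem from the key identity \eqref{ident-B}, which gives
\[
\widehat{\lambda}_{N,T}-\lambda=\frac{B_{N,T}-\lambda J_{N,T}}{J_{N,T}}=-\sigma\frac{\xi_{N,T}}{J_{N,T}}.
\]
Everything reduces to understanding the joint asymptotics of the denominator $J_{N,T}=\sum_{k=1}^N k^4\int_0^T u_k^2(t)\,dt$ and the martingale-type numerator $\xi_{N,T}=\sum_{k=1}^N k^2\int_0^T u_k(t)\,dw_k(t)$ as $N,T\to\infty$.

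For strong consistency I would first pin down the almost-sure growth rate of $J_{N,T}$. From \eqref{eq:2ndmomentuk} one has $\bE\big[k^4\int_0^T u_k^2\big]=k^2\sigma^2/\ell_k^2\cdot(T^2/4+\text{l.o.t.})$, so the expectation of $J_{N,T}$ behaves like $\sigma^2 N T^2/(4\lambda)$ summed to $\sigma^2 N^3T^2/(12\lambda)$ as in the Fisher-information computation; I would show $J_{N,T}\sim \sigma^2 N^3 T^2/(12\lambda)$ almost surely (dividing by $N^3T^2$ and using independence across $k$ together with a law-of-large-numbers / Borel--Cantelli argument to control fluctuations). The numerator $\xi_{N,T}$ is a sum over $k$ of independent stochastic integrals; its conditional quadratic variation is exactly $\sigma^{-2}J_{N,T}$ up to constants, so $\xi_{N,T}/J_{N,T}\to 0$ a.s.\ follows from the strong law for martingales (the Kolmogorov-type criterion: $\xi_{N,T}/\langle\xi\rangle\to0$ once $\langle\xi\rangle\to\infty$). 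Combining these two facts yields \eqref{eq:ConsNT}.

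For asymptotic normality I would rewrite the normalized error as
\[
TN^{3/2}(\widehat{\lambda}_{N,T}-\lambda)=-\sigma\,\frac{TN^{3/2}\,\xi_{N,T}}{J_{N,T}}
=-\sigma\cdot\frac{\xi_{N,T}/(N^{3/2}T)}{J_{N,T}/(N^{3}T^{2})}.
\]
By the consistency step the denominator converges a.s.\ to $\sigma^2/(12\lambda)$, so by Slutsky it suffices to prove that $\xi_{N,T}/(N^{3/2}T)$ converges in distribution to a centered Gaussian, and then identify the variance so that the ratio produces exactly $\mathcal{N}(0,12\lambda)$. This is where the Malliavin--Stein method enters: $\xi_{N,T}$ is a sum of elements in the first and second Wiener chaos (it is a family of iterated Itô integrals against the $w_k$), so I would express $\xi_{N,T}/(N^{3/2}T)$ as a Malliavin-differentiable functional $F_{N,T}$, compute the variance $\sigma_{N,T}^2=\bE[F_{N,T}^2]$ and verify it converges, and then bound the total-variation (or Wasserstein) distance to a Gaussian by the fourth-moment / contraction quantity $\bE\big[(1-\sigma^{-2}_{N,T}\langle DF_{N,T},-DL^{-1}F_{N,T}\rangle)^2\big]\to0$, which is the standard Nourdin--Peccati criterion.

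The main obstacle I expect is the double-limit nature of the problem: unlike the $N\to\infty$, fixed-$T$ regime of \cite{LiuL}, here the oscillatory terms $\sin(2\ell_k T)/\ell_k$ and $\cos(2\ell_k T)$ in \eqref{eq:2ndmomentuk} and in the Fisher information do not simply vanish termwise but must be shown to be negligible uniformly after summing over $k$ and dividing by the correct powers of $N$ and $T$. Controlling these trigonometric remainders in the variance computation—ensuring the limiting variance is exactly the constant $12\lambda$ independent of how $N$ and $T$ jointly tend to infinity—and verifying the Malliavin-Stein contraction bound goes to zero uniformly in this joint regime, rather than along a single parameter, will be the technically delicate heart of the proof.
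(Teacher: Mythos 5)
Your skeleton matches the paper's: the same identity $\widehat{\lambda}_{N,T}-\lambda=-\sigma\,\xi_{N,T}/J_{N,T}$, almost-sure control of the denominator, a CLT for the numerator, and Slutsky's theorem to conclude. Your normality route via the second-chaos Nourdin--Peccati criterion is not the one used inside the paper's proof of Theorem \ref{th:NT1} (which verifies the classical Lindeberg condition with a Burkholder--Davis--Gundy fourth-moment bound), but it coincides with the paper's alternative Malliavin--Stein proof in Subsection \ref{sect-MSap}, so that part of your plan is viable, and your variance bookkeeping ($\Var(\xi_{N,T})=\bE J_{N,T}\sim \sigma^2N^3T^2/(12\lambda)$, so that the normalized ratio produces $\mathcal{N}(0,12\lambda)$) is correct.

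The genuine gap is in the consistency step, and it sits exactly where this theorem differs from Theorem \ref{main-th1}: the joint limit. The martingale strong law you invoke --- $M_t/\langle M\rangle_t\to0$ a.s.\ once $\langle M\rangle_t\to\infty$ --- is a single-parameter statement. Applied to $(\xi_{N,T})_{T\ge0}$ for fixed $N$ (whose bracket is exactly $J_{N,T}$, incidentally, not $\sigma^{-2}J_{N,T}$) it gives $\xi_{N,T}/J_{N,T}\to0$ a.s.\ as $T\to\infty$ with a threshold $T_0(N,\omega)$; applied across $k$ for fixed $T$ it gives a threshold $N_0(T,\omega)$. Neither yields what \eqref{eq:ConsNT} demands: a single null set off which, for every $\varepsilon>0$, there exist $N_0,T_0$ with $|\xi_{N,T}/J_{N,T}|<\varepsilon$ for all $N\ge N_0$ and $T\ge T_0$ simultaneously. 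The paper closes precisely this hole by a quantitative Kolmogorov-series argument: it checks that $\sum_{N\ge1}\Var(\chi_{N,T})\big/\big(\sum_{k\le N}\Var(\chi_{k,T})\big)^2$ and $\sum_{N\ge1}\Var\big(N^4\int_0^T u_N^2(t)\,dt\big)\big/\big(\sum_{k\le N}\Var(\chi_{k,T})\big)^2$ are bounded by constants independent of $T$ (the first is in fact $O(T^{-2})$), so the strong law for the independent summands in $k$ applies with an $N_0$ uniform in $T\ge T_0$. Your Borel--Cantelli idea for $J_{N,T}$ can be carried out in this $T$-uniform fashion, but the one-line appeal to the martingale Kolmogorov criterion cannot; it must be replaced by an argument with explicit rates uniform in $T$. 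You do flag the double limit as the delicate heart of the matter, but you locate the danger in the trigonometric remainders and in the Malliavin--Stein contraction bound; those are comparatively routine (the oscillatory terms are $O(\ell_k^{-1})$ and wash out after summation and normalization), whereas the $T$-uniformity of the strong law is the real crux.
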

In this section we prove the consistency of the estimator, while the asymptotical normality is proven with the Malliavin-Stein method in the following section.

\begin{proof}
We note that
\begin{align}
\widehat{\lambda}_{N,T}-\lambda & =\frac{-\sigma \xi_{1,N,T}}{J_{1,N,T}}\\
&=-\frac{\sigma \sum_{k=1}^N k^2 \int_0^{T} u_k(t)d w_k(t)}
{\sum_{k=1}^N k^4 \int_0^{T} u_k^2(t)d t} \nonumber\\
&= -\frac{\sigma \sum_{k=1}^N\chi_{k,T}}{\sum_{k=1}^N \Var\left(\chi_{k,T}\right)}
\cdot \frac{\sum_{k=1}^N \Var\left(\chi_{k,T}\right)} {\sum_{k=1}^N k^4\int_0^{T} u_k^2(t)d t}, \label{eq:thetahat-theta0}
\end{align}
where
$$
\chi_{k,T}:=k^2\int_0^{T}u_k(t)d w_k(t).
$$
From \eqref{eq:2ndmomentuk}, we have
$$
\frac{1}{k^2}\Var(\chi_{k,T})=k^2 \int_0^{T} \mathbb{E} u_k^2(t) d t
=\frac{\sigma^2}{\lambda}\left(\frac{T^2}{4}+\frac{\cos(2\ell_k T)-1}{8\ell_k^2}\right)\rightarrow \frac{\sigma^2T^2}{4\lambda},\quad \mbox{as}\ k\to \infty,
$$
and thus,
\begin{align}
\sum_{k=1}^N \Var(\chi_{k,T}) &\simeq \frac{N^3\sigma^2T^2}{12\lambda},\quad \mbox{as}\ N\to \infty.\label{asympSumVarxi}
\end{align}
Using the last two lines we can deduce that
\begin{align*}
    \frac{ \Var\left(\chi_{N,T}\right)}{\left(\sum_{k=1}^N \Var\left(\chi_{k,T}\right)\right)^2}&\simeq \frac{N^2\sigma^2 T^2 }{4\lambda}\frac{144\lambda^2}{N^6 \sigma^4 T^4} \quad \mbox{as}\ N\to \infty\\
    &=\frac{38\lambda}{N^4 \sigma^2 T^2}\quad \mbox{as}\ N\to \infty.
\end{align*}
Moreover, using  Jensen's inequality, we get that
\begin{align}
\frac{1}{k^4}\Var\left(k^4\int_0^T u_k^2(t)d t\right)
&\leq \frac{1}{k^4}\mathbb{E}\left(k^4\int_0^T u_k^2(t)d t\right)^2 \leq k^4 T\int_0^T\mathbb{E}u_k^4(t)d t\nonumber\\
&=k^4 T3\int_0^T\mathbb{E}^2[u_k^2(t)]d t\nonumber\\
&=3k^4 T\int_0^T \Bigg(\frac{\sigma^2}{\ell_k^2}\left(\frac{t}{2}-\frac{\sin(2\ell_k t)}{4\ell_k}\right) \Bigg)^2 dt\nonumber\\
&=\frac{3Tk^4\sigma^4}{\ell_k^4}\Bigg(\frac{T^3}{6}+\frac{\sin(2\ell_k T)-2\ell_k T\cos(2\ell_k T)}{4\ell_k^2}\nonumber\\
&\hspace{2cm}+\frac{T}{32\ell_k^2}-\frac{\sin(4\ell_k T)}{64\ell_k^3}\Bigg)\nonumber\\
&\rightarrow \frac{T^4\sigma^4}{2\lambda^2},\label{asympVar}
\end{align}
as $k\rightarrow\infty$. Moreover, we can prove $$\lim_{N\rightarrow\infty}\frac{1}{N^{5}}\sum_{k=1}^N\Var\left(k^4\int_0^T u_k^2(t)d t\right)=\frac{T^4\sigma^4}{10\lambda^2}$$ or $\sum_{k=1}^N\Var\left(k^4\int_0^T u_k^2(t)d t\right)\sim N^5 \frac{T^4\sigma^4}{10\lambda^2}$ as $N\rightarrow \infty$.

Hence, 
\begin{align*}
\sum_{N=1}^{\infty}\frac{ \Var\left(\chi_{N,T}\right)}{\left(\sum_{k=1}^N \Var\left(\chi_{k,T}\right)\right)^2}&\simeq\sum_{N=1}^{\infty} \frac{\frac{N^2\sigma^2T}{4\lambda}}{\frac{N^6 \sigma^4 T^4}{144 \lambda^2}}= \frac{C_1}{T^2}
\sum_{N=1}^{\infty}\frac{1}{N^{4}}\leq C_3<\infty, \\
\sum_{N=1}^{\infty}
\frac{\Var\left(N^4\int_0^T u_N^2(t)d t\right)}
{\left(\sum_{k=1}^N \Var\left(\chi_{k,T}\right)\right)^2}
&\simeq \sum_{N=1}^{\infty}\frac{\frac{N^4 T^4 \sigma^4}{2\lambda^2}}{\frac{N^6 \sigma^4 T^4}{144 \lambda^2}}
\leq C_2\sum_{N=1}^{\infty}\frac{1}{N^2}<\infty,
\end{align*}

where $C_1,C_2>0$ are some constants\footnote{Notoriously, we will denote by $C$ with subindexes generic constants that may change from line to line. } independent of $T$. Using the uniform boundedness of the above series, and employing the strong law of large numbers, we deduce that for every $\varepsilon>0$ and $T\geq T_0$, there exists $N_0>0$ independent of $T$ such that for $N\geq N_0$,
$$
\left|\frac{\sigma \sum_{k=1}^N\chi_{k,T}}{\sum_{k=1}^N \Var\left(\chi_{k,T}\right)}  \right|< \varepsilon,  \quad 
\mbox{and} \quad
\left|\frac{\sum_{k=1}^N \Var\left(\chi_{k,T}\right)}
{\sum_{k=1}^N k^4\int_0^{T} u_k^2(t)d t}-1\right|
< \varepsilon
$$
with probability one. Therefore,
\begin{equation}\label{strongLLN}
\lim_{N,T \to\infty} \frac{\sigma \sum_{k=1}^N\xi_{k,T}}{\sum_{k=1}^N \Var\left(\xi_{k,T}\right)} =0 \quad \mbox{and} \quad
\lim_{N,T \to\infty}\frac{\sum_{k=1}^N \Var\left(\xi_{k,T}\right)} {\sum_{k=1}^N k^4\int_0^{T} u_k^2(t)d t} = 1
\end{equation}
with probability one. From here, and using \eqref{eq:thetahat-theta0}, the proof of \eqref{eq:ConsNT} is complete.
This completes the proof of the consistency of the estimator.

\end{proof}

\subsection[Malliavin-Stein's approach]{Asymptotic normality of the MLE by Malliavin-Stein's approach}\label{sect-MSap}

Now, we use the Malliavin-Stein's approach to  prove the asymptotic normality of the estimator. The material of Malliavin calculus we will use is summarized in the \ref{apendix:MalCal}. First we introduce some  previous lemmas.

As before, let $u_0=0$, and for convenience, in this section we will use the following notations:
\begin{align*}
F_{N,T}&:= \frac{\xi_{N,T}}{J_{N,T}} = \frac{\sum_{k=1}^N k^2\int_0^T u_k(t)dw_k(t)}{\sum_{k=1}^N k^4\int_0^T u_k^2(t)dt} ,\\
\widehat{F}_{N,T} &:=\frac{\xi_{N,T}}{\bE J_{N,T}} = \frac{\sum_{k=1}^N k^2\int_0^T u_k(t)dw_k (t)}{\sum_{k=1}^N k^4\bE \int_0^T u_k^2(t)dt}=\frac{\xi_{1,N}}{R_{N,T}^2}   ,
\end{align*}
where $R_{N,T}^2:=\bE J_{N,T}$.

Observe that it is possible to write the stochastic process $\xi_{N,T}$ as a sum of $N$ independent multiple Wiener integral of order two, this implies that $\xi_{N,T}$ belongs to the second-order Wiener chaos. This is important since allow us to use the Malliavin-Stein method (see \cite{NourdinPeccati2012} for a nice reference on the Malliavin-Stein method).

Before proving the main theorem of this section, we will present a couple of lemmas.

\begin{lemma}
The next limits hold
\begin{align*}
    \lim_{k\rightarrow \infty} k^4  \Var (u_k^2 (t)) =\frac{\sigma^4}{\lambda^2}\frac{t^2}{2},\quad \lim_{k\rightarrow \infty}   \Var (v_k^2 (t)) = \sigma^4 \frac{t^2}{2}.
\end{align*}
Moreover, we have
\begin{align*}
    \lim_{k\rightarrow \infty} \frac{1}{N^5}\sum_{k=1}^N k^8  \Var (u_k^2 (t)) =\frac{\sigma^4}{5\lambda^2}\frac{t^2}{2}\quad \text{and } \lim_{k\rightarrow \infty} \frac{1}{N}\sum_{k=1}^N   \Var (v_k^2 (t)) =\sigma^4\frac{t^2}{2}.
\end{align*}
\end{lemma}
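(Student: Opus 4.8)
The plan is to exploit the Gaussianity of the Fourier modes. For each fixed $t$, the representations in \eqref{Osc3} show that both $u_k(t)$ and $v_k(t)$ are Wiener integrals of deterministic kernels against the Brownian motion $w_k$, hence centered Gaussian random variables. The key observation is the elementary moment identity for a centered Gaussian $X$: since $\bE X^4 = 3(\bE X^2)^2$, we have $\Var(X^2) = \bE X^4 - (\bE X^2)^2 = 2(\bE X^2)^2$. Applying this with $X = u_k(t)$ and $X = v_k(t)$ reduces both pointwise limits to the second-moment asymptotics already established in Lemma \ref{2momentl2}, so that almost no new trigonometric computation is needed.

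Concretely, I would write $k^4\Var(u_k^2(t)) = 2\big(k^2\,\bE u_k^2(t)\big)^2$ and insert the limit $\lim_{k\to\infty} k^2\,\bE u_k^2(t) = \frac{\sigma^2}{\lambda}\frac{t}{2}$ from Lemma \ref{2momentl2}, obtaining $\lim_{k\to\infty} k^4\Var(u_k^2(t)) = 2\big(\tfrac{\sigma^2 t}{2\lambda}\big)^2 = \frac{\sigma^4}{\lambda^2}\frac{t^2}{2}$. Similarly, $\Var(v_k^2(t)) = 2\big(\bE v_k^2(t)\big)^2$ together with $\bE v_k^2(t)\to \frac{\sigma^2}{2}t$ yields $\lim_{k\to\infty}\Var(v_k^2(t)) = 2\big(\tfrac{\sigma^2 t}{2}\big)^2 = \sigma^4\frac{t^2}{2}$. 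One could instead substitute \eqref{eq:2ndmomentuk} directly and expand, but the Gaussian identity avoids the oscillatory bookkeeping and makes the cancellations transparent.

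For the two averaged statements, I would invoke a Toeplitz/Stolz--Ces\`aro argument. Setting $a_k := k^4\Var(u_k^2(t))$, which converges to $L := \frac{\sigma^4}{\lambda^2}\frac{t^2}{2}$ by the first part, and using that the weights $k^4$ are positive with $\sum_{k=1}^N k^4 \sim N^5/5$ and divergent total mass, the weighted average $\big(\sum_{k=1}^N k^4\big)^{-1}\sum_{k=1}^N k^4 a_k$ converges to $L$; hence $\frac{1}{N^5}\sum_{k=1}^N k^8\Var(u_k^2(t)) = \frac{1}{N^5}\sum_{k=1}^N k^4 a_k \to \frac{L}{5} = \frac{\sigma^4}{5\lambda^2}\frac{t^2}{2}$. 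For the $v_k$ sum the ordinary Ces\`aro mean of the convergent sequence $\Var(v_k^2(t))$ gives $\frac{1}{N}\sum_{k=1}^N\Var(v_k^2(t))\to \sigma^4\frac{t^2}{2}$. (I would also flag that the limits in the second display should read $N\to\infty$ rather than $k\to\infty$.)

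The proof is essentially routine once the Gaussian identity is in hand; the only point demanding a little care is the weighted Ces\`aro averaging, where one must check that normalizing $\sum_{k=1}^N k^4 a_k$ by $N^5$ rather than by $\sum_{k=1}^N k^4 \sim N^5/5$ is precisely what produces the extra factor $1/5$ in the stated limit. Beyond this indexing/normalization check, no genuine obstacle arises.
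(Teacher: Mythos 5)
Your proposal is correct and follows essentially the same route as the paper: the Gaussian identity $\Var(X^2)=2(\bE X^2)^2$ applied to $u_k(t)$ and $v_k(t)$, reduction to the second-moment asymptotics of Lemma \ref{2momentl2}, and the weighted Ces\`aro fact $\frac{1}{N^5}\sum_{k=1}^N k^4 \to \frac{1}{5}$ for the averaged limits (which the paper invokes with less detail than you supply). Your remark that the second display's limits should read $N\to\infty$ rather than $k\to\infty$ is also a correct observation about a typo in the statement.
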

\begin{proof}
Since $u_k$ is Gaussian, we have
\begin{align*}
    \Var ( u_k^2 (t))&=\bE[ u_k^4]-\bE^2[ u_k^2]\\
    &=3\bE^2[ u_k^2]-\bE^2[ u_k^2]=2\bE^2[ u_k^2]\\
    &=2     \Big(\frac{\sigma^2}{\ell_k^2}\left(\frac{t}{2}-\frac{\sin(2\ell_k t)}{4\ell_k}\right)\Big)^2 
\end{align*}
Thus,
\begin{align*}
   \lim_{k\rightarrow \infty} k^4  \Var ( u_k^2 (t))&=\frac{\sigma^4}{\lambda^2}\frac{t^2}{2}.
\end{align*}
And to prove the second conclusion, notice that $\lim_{N\rightarrow\infty }\frac{1}{N^5}\sum_{k=1}^N k^4 =\frac{1}{5}$. And we can conclude  that
\begin{align*}
    \lim_{k\rightarrow \infty} \frac{1}{N^5}\sum_{k=1}^N k^8  \Var (u_k^2 (t)) =\frac{\sigma^4}{5\lambda^2}\frac{t^2}{2}.
\end{align*}
Similarly for $v_k$.
\end{proof}

The proof of the following lemma is deferred to \ref{Appendix:proofs}.

\begin{lemma}\label{conv-variance}
Let $\mathcal{H}$ be the space endowed with the inner product defined in the last section.
Let $D$ be the Malliavin derivative defined in \eqref{def-MD-tj}.
Then, we have
\begin{align*}
\sqrt{\Var\left(\frac{1}{2}\norm{ R_{N,T} D\widehat{F}_{N,T}}_{\mathcal{H}}^2\right)}&\longrightarrow 0, \mbox{ as } N,T \rightarrow \infty.
\end{align*}
\end{lemma}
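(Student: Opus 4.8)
The plan is to reduce the statement to a variance estimate for the squared Malliavin norm of $\xi_{N,T}$ and then exploit the independence of the driving Brownian motions $w_k$. Since $R_{N,T}^2$ is deterministic and $\widehat{F}_{N,T}=\xi_{N,T}/R_{N,T}^2$, one has $R_{N,T}D\widehat{F}_{N,T}=R_{N,T}^{-1}D\xi_{N,T}$, so that
\begin{equation*}
\sqrt{\Var\Big(\tfrac{1}{2}\norm{R_{N,T}D\widehat{F}_{N,T}}_{\mathcal{H}}^2\Big)}=\frac{1}{2R_{N,T}^2}\sqrt{\Var\big(\norm{D\xi_{N,T}}_{\mathcal{H}}^2\big)} .
\end{equation*}
For the denominator I would use $R_{N,T}^2=\bE J_{N,T}\simeq \sigma^2 T^2 N^3/(12\lambda)$, which is exactly the Fisher-information asymptotics computed above (equivalently Corollary \ref{lem-1}). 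Thus everything comes down to showing that $\sqrt{\Var(\norm{D\xi_{N,T}}_{\mathcal{H}}^2)}$ grows strictly slower than $R_{N,T}^2$.

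The first real step is to compute $D\xi_{N,T}$. Writing $\mathcal{H}=\bigoplus_{k\ge1}L_2([0,T])$ with $\norm{h}_{\mathcal{H}}^2=\sum_{k\ge1}\int_0^T h_k(r)^2\,dr$, and using that each $u_k$ is driven only by $w_k$, the Malliavin derivative \eqref{def-MD-tj} of the It\^o integral produces a boundary term plus an interior term; for $j\le N$,
\begin{equation*}
D^j_r\xi_{N,T}=j^2\Big(u_j(r)+\frac{\sigma}{\ell_j}\int_r^T\sin\big(\ell_j(t-r)\big)\,dw_j(t)\Big)=:j^2 G_j(r),
\end{equation*}
and $D^j_r\xi_{N,T}=0$ for $j>N$. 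Hence $\norm{D\xi_{N,T}}_{\mathcal{H}}^2=\sum_{j=1}^N j^4\int_0^T G_j(r)^2\,dr$, and since the $w_j$ are independent the summands are independent random variables, so
\begin{equation*}
\Var\big(\norm{D\xi_{N,T}}_{\mathcal{H}}^2\big)=\sum_{j=1}^N j^8\,\Var\Big(\int_0^T G_j(r)^2\,dr\Big).
\end{equation*}

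To bound each term I would apply Lemma \ref{lemma-sqr} together with Gaussianity of $G_j(r)$: being a centered Gaussian, $\Var(G_j(r)^2)=2(\bE G_j(r)^2)^2$, so $\sqrt{\Var(\int_0^T G_j(r)^2\,dr)}\le\sqrt{2}\int_0^T\bE G_j(r)^2\,dr$. The two pieces of $G_j(r)$ use $w_j$ on the disjoint intervals $[0,r]$ and $[r,T]$ and are therefore independent, giving, via \eqref{eq:2ndmomentuk}, $\bE G_j(r)^2=\bE u_j(r)^2+\tfrac{\sigma^2}{\ell_j^2}\int_r^T\sin^2(\ell_j(t-r))\,dt=\tfrac{\sigma^2}{\ell_j^2}\big(\tfrac{T}{2}+O(\ell_j^{-1})\big)$, where the correction is uniform in $r$. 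Integrating in $r$ (the oscillatory remainder integrates to $O(j^{-4})$, uniformly in $T$) yields $\int_0^T\bE G_j(r)^2\,dr\le C\,T^2 j^{-2}$, hence $\Var(\int_0^T G_j(r)^2\,dr)\le C\,T^4 j^{-4}$.

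Summing, $\Var(\norm{D\xi_{N,T}}_{\mathcal{H}}^2)\le C\,T^4\sum_{j=1}^N j^4\simeq C\,T^4 N^5/5$, so that $\sqrt{\Var(\norm{D\xi_{N,T}}_{\mathcal{H}}^2)}\le C'\,T^2 N^{5/2}$. Dividing by $2R_{N,T}^2\simeq \sigma^2 T^2 N^3/(6\lambda)$, the powers of $T$ cancel and the whole quantity is $O(N^{-1/2})$, which tends to $0$ as $N,T\to\infty$. This cancellation of the $T$-dependence between numerator and denominator is precisely what makes the joint limit work. The step I expect to be the main obstacle is the Malliavin-derivative computation itself --- correctly accounting for the boundary term $u_j(r)$ that appears because the integrand $u_k$ and the integrator $w_k$ share the same noise --- together with checking that the lower-order oscillatory contributions in $\bE G_j(r)^2$ are genuinely negligible \emph{uniformly in} $T\in[0,\infty)$, so that no spurious powers of $T$ enter and spoil the $O(N^{-1/2})$ rate.
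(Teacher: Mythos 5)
Your proof is correct, and while it shares the paper's overall skeleton, it handles the central estimate by a genuinely different decomposition. Both arguments compute the Malliavin derivative the same way (via Proposition~\ref{chain-rule-Malliavin}, giving the boundary term $j^2u_j(r)$ plus the forward Wiener integral $j^2\tfrac{\sigma}{\ell_j}\int_r^T\sin(\ell_j(t-r))\,dw_j(t)$), both invoke Lemma~\ref{lemma-sqr}, both use the asymptotics $R_{N,T}^2\simeq \sigma^2T^2N^3/(12\lambda)$, and both land on the same $O(N^{-1/2})$ rate with the powers of $T$ cancelling. The difference is what happens to the square of the derivative. The paper expands $\norm{R_{N,T}D\widehat{F}_{N,T}}_{\mathcal{H}}^2$ into three pieces $A_1+A_2+A_3$ (square of the boundary term, cross term, square of the forward integral), applies $\sqrt{\Var(A_1+A_2+A_3)}\le\tfrac{\sqrt{3}}{2}\,2\,(\sqrt{\Var A_1}+\sqrt{\Var A_2}+\sqrt{\Var A_3})$, and then estimates each $B_i$ separately --- the cross term via conditioning on $\mathscr{F}_r$ and Cauchy--Schwarz, the last term via fourth moments of the Wiener integral. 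You avoid the expansion entirely: you keep $G_j(r)$ intact and observe that it is a \emph{centered Gaussian}, because its two pieces are Wiener integrals of the same Brownian motion over the disjoint intervals $[0,r]$ and $[r,T]$ and hence independent Gaussians. This gives the exact identity $\Var(G_j(r)^2)=2\,(\bE G_j(r)^2)^2$ and, by the It\^o isometry, the clean formula
\begin{equation*}
\bE G_j(r)^2=\frac{\sigma^2}{\ell_j^2}\left(\frac{r}{2}+\frac{T-r}{2}+O(\ell_j^{-1})\right)=\frac{\sigma^2}{\ell_j^2}\left(\frac{T}{2}+O(\ell_j^{-1})\right),
\end{equation*}
uniformly in $r$ --- a structural cancellation the paper's term-by-term treatment never makes visible. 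You also reorder the two main tools: you use independence across $j$ first (so the variance of the sum is \emph{exactly} the sum of variances) and apply Lemma~\ref{lemma-sqr} mode by mode, whereas the paper applies Lemma~\ref{lemma-sqr} first and independence pointwise in $r$; both orderings are valid. The net effect is that your argument compresses the paper's roughly two pages of $B_1,B_2,B_3$ estimates into a few lines, at the cost of leaning on the specific Gaussian structure, while the paper's modular route would survive in settings where the boundary and forward terms are not jointly Gaussian. Your attention to the uniformity in $T$ of the oscillatory remainders (the antiderivative of the sine being bounded, so the remainder integrates to $O(j^{-4})$ independently of $T$) is exactly the point that makes the joint limit $N,T\to\infty$ work, and it matches the paper's conclusion.
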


Now we will prove the asymptotic normality of $R_{N,T} F_{N,T}$.

\begin{theorem}
The next limit holds in distribution
 \begin{align*}
    R_{N,T} F_{N,T}&\longrightarrow \mathcal{N}(0,1),
\end{align*}
 as $N,T\to \infty$.
\end{theorem}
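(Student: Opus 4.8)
The plan is to prove the statement in two stages: first establish a central limit theorem for the auxiliary quantity $R_{N,T}\widehat{F}_{N,T}=\xi_{N,T}/R_{N,T}$, whose denominator is deterministic, and then transfer the conclusion to $R_{N,T}F_{N,T}$ by Slutsky's theorem. The link between the two is the identity
\[
R_{N,T}F_{N,T}=R_{N,T}\widehat{F}_{N,T}\cdot\frac{\bE J_{N,T}}{J_{N,T}},
\]
together with the almost sure convergence $J_{N,T}/\bE J_{N,T}\to 1$ already obtained inside the proof of Theorem \ref{th:NT1}, which forces the second factor to tend to $1$.

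For the main stage I would set $G_{N,T}:=\xi_{N,T}/R_{N,T}=R_{N,T}\widehat{F}_{N,T}$ and first check that it is correctly normalized. Since $\xi_{N,T}$ is a sum of It\^o integrals it is centered, and by the It\^o isometry together with the independence of the $w_k$,
\[
\Var(\xi_{N,T})=\sum_{k=1}^N k^4\int_0^T\bE u_k^2(t)\,dt=\bE J_{N,T}=R_{N,T}^2,
\]
so that $\bE[G_{N,T}]=0$ and $\Var(G_{N,T})=1$ exactly. The structural observation that makes the Malliavin--Stein method applicable is that each $u_k(t)$ is itself a Wiener integral against $w_k$, so $\int_0^T u_k(t)\,dw_k(t)$ is an iterated (double) integral and $G_{N,T}$ is a constant multiple of an element of the second Wiener chaos of the driving Gaussian field.

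The core is then the Malliavin--Stein bound in total variation: for a centered, unit-variance $F$ in the domain of $D$ one has $d_{TV}(F,\mathcal N(0,1))\le 2\sqrt{\Var(\langle DF,-DL^{-1}F\rangle_{\mathcal H})}$, and for $F$ in the second chaos the identity $-DL^{-1}F=\tfrac12 DF$ collapses this to
\[
d_{TV}\bigl(G_{N,T},\mathcal N(0,1)\bigr)\le 2\sqrt{\Var\!\left(\tfrac12\norm{DG_{N,T}}_{\mathcal H}^2\right)}=2\sqrt{\Var\!\left(\tfrac12\norm{R_{N,T}D\widehat{F}_{N,T}}_{\mathcal H}^2\right)}.
\]
By Lemma \ref{conv-variance} the right-hand side tends to $0$ as $N,T\to\infty$, whence $G_{N,T}=R_{N,T}\widehat{F}_{N,T}\to\mathcal N(0,1)$ in total variation, and in particular in distribution. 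Combining this with the Slutsky reduction of the first paragraph gives $R_{N,T}F_{N,T}\to\mathcal N(0,1)$.

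The genuinely hard input is the variance estimate $\Var(\tfrac12\norm{R_{N,T}D\widehat{F}_{N,T}}_{\mathcal H}^2)\to 0$, which is the engine driving the normal approximation; this is exactly what has been isolated as Lemma \ref{conv-variance} and deferred to the appendix. Once it is granted, the only points requiring care within the present proof are the precise identification of $G_{N,T}$ as a second-chaos variable (so that the contraction $-DL^{-1}F=\tfrac12 DF$ is legitimate) and the exact variance normalization $\Var(G_{N,T})=1$; the remaining chaos-identification and Slutsky steps are routine.
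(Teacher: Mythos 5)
Your proposal is correct and takes essentially the same route as the paper's proof: both rest on Lemma~\ref{conv-variance} combined with the second-chaos Malliavin--Stein bound (Proposition~\ref{Th2.2}) to obtain $R_{N,T}\widehat{F}_{N,T}\to\mathcal{N}(0,1)$ in total variation, and then transfer the limit to $R_{N,T}F_{N,T}$ using the almost sure convergence $J_{N,T}/R_{N,T}^2\to 1$ from the proof of Theorem~\ref{th:NT1} together with Slutsky's theorem. The only cosmetic differences are your multiplicative decomposition $R_{N,T}F_{N,T}=R_{N,T}\widehat{F}_{N,T}\cdot\bigl(\bE J_{N,T}/J_{N,T}\bigr)$ in place of the paper's additive split $R_{N,T}F_{N,T}=R_{N,T}(F_{N,T}-\widehat{F}_{N,T})+R_{N,T}\widehat{F}_{N,T}$, and your explicit verification of the second-chaos structure and unit-variance normalization of $\xi_{N,T}/R_{N,T}$, which the paper leaves largely implicit.
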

\begin{proof}
Notice that $\bE\left(R^2_{N,T}\widehat{F}_{N,T}^2\right)=1.$
We split $R_{N}F_{N}$ into
\begin{equation}\label{eq:M1}
R_{N,T}F_{N,T}=R_{N,T}\left(F_{N,T}-\widehat{F}_{N,T}\right)+R_{N,T}\widehat{F}_{N,T}.
\end{equation}
We note that
$$
 R_{N,T}(F_{N,T}-\widehat{F}_{N,T}) = \frac{R_{N,T}^2}{J_{1,N}} \frac{\xi_{1,N}}{R_{N,T}}
\left(1-\frac{J_{1,N}}{R_{N,T}^2} \right).
$$
From \ref{lem-1} and \eqref{strongLLN},
$$
\frac{R_{N}^2}{J_{1,N}} \longrightarrow 1, \quad
1-\frac{J_{1,N}}{R_{N}^2} \longrightarrow 0, 
$$
 as $N,T\rightarrow\infty$, with probability 1. On the other hand, by Lemma~\ref{conv-variance}, we have
$$
\frac{\xi_{1,N}}{R_{N,T}}=R_{N,T}\widehat{F}_{N,T}\overset{d}{\longrightarrow} \mathcal{N}(0,1),
$$
 as $N,T\rightarrow\infty$. Since,   by Lemma~\ref{conv-variance} and Proposition~\ref{Th2.2}, we have that
\begin{align*}
\lim_{N,T\to\infty}d_{TV}\left(R_{N,T}\widehat{F}_{N,T}, \mathcal{N}(0,1)\right)=0.
\end{align*}

Consequently, Theorem~\ref{Equiv} implies that
\begin{align}\label{weakconv}
    \w\lim_{N,T\to\infty}R_{N}\widehat{F}_{N}= \mathcal{N}\left(0,1\right).
\end{align}
Hence, by Slutsky's theorem, we deduce that $R_{N,T}(F_{N,T}-\widehat{F}_{N,T}) \overset{d}{\longrightarrow} 0$ ,  as $N,T\rightarrow\infty$, which consequently implies that
\begin{equation}\label{eq:M2}
R_{N,T}(F_{N,T}-\widehat{F}_{N,T}) \longrightarrow 0,
\end{equation}
 as $N,T\rightarrow\infty$, in probability.
And finally, \eqref{weakconv} combined with \eqref{eq:M1} and \eqref{eq:M2}, implies that
\begin{align*}
\w\lim_{N,T\to\infty}R_{N,T} F_{N,T}= \mathcal{N}(0,1).    
\end{align*}
\end{proof}

 And finally we can conclude with the asymptotic normality of the estimator.

\begin{corollary}
Under assumptions \eqref{fron} and \eqref{coef},  the next limit  holds
\begin{equation*}
\begin{split}
&\lim_{N,T\to \infty} T N^{3/2}(\widehat{\lambda}_{N,T}-\lambda)=
\mathcal{N}\left(0, 12\lambda\right),
\end{split}
\end{equation*}
in distribution.
\end{corollary}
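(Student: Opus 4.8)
The plan is to reduce the statement to the asymptotic normality of $R_{N,T}F_{N,T}$ just established, since essentially all of the analytic work has already been carried out in the preceding theorem. First I would record the exact algebraic identity linking the estimator error to $F_{N,T}$: combining $\widehat{\lambda}_{N,T}=B_{N,T}/J_{N,T}$ with the identity $B_{N,T}=\lambda J_{N,T}-\sigma\xi_{N,T}$ from \eqref{ident-B} gives
\[
\widehat{\lambda}_{N,T}-\lambda=\frac{B_{N,T}-\lambda J_{N,T}}{J_{N,T}}=-\sigma\,\frac{\xi_{N,T}}{J_{N,T}}=-\sigma\,F_{N,T}.
\]
This converts the claim into a statement about the limiting law of $-\sigma\,TN^{3/2}F_{N,T}$, and no fresh stochastic analysis is needed.

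Next I would insert the normalizing constant $R_{N,T}$ and factor the scaled error as
\[
TN^{3/2}\big(\widehat{\lambda}_{N,T}-\lambda\big)=-\sigma\,\frac{TN^{3/2}}{R_{N,T}}\cdot\big(R_{N,T}F_{N,T}\big).
\]
The second factor converges in distribution to $\mathcal{N}(0,1)$ by the theorem proved immediately above. For the first, deterministic, factor I would invoke the Fisher-information asymptotics already computed in this section, namely $R_{N,T}^2=\bE J_{N,T}\simeq \sigma^2 N^3 T^2/(12\lambda)$, whence $R_{N,T}\simeq \sigma\,TN^{3/2}/\sqrt{12\lambda}$ and therefore $TN^{3/2}/R_{N,T}\to \sqrt{12\lambda}/\sigma$ as $N,T\to\infty$.

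Finally I would combine the two factors by Slutsky's theorem: the product of a deterministic sequence tending to $\sqrt{12\lambda}/\sigma$ with a sequence converging weakly to $\mathcal{N}(0,1)$ converges weakly to $\sqrt{12\lambda}\,\mathcal{N}(0,1)$. The explicit prefactor $-\sigma$ cancels the $\sigma$ in the denominator, and since a centered Gaussian is symmetric the overall sign is immaterial, yielding the limit $\mathcal{N}(0,12\lambda)$ as claimed. The only point requiring genuine care is upgrading the $\simeq$ in the Fisher-information estimate to an honest limit of the ratio $TN^{3/2}/R_{N,T}$, uniformly as \emph{both} indices tend to infinity; but because the correction terms in the exact expression $\bE J_{N,T}=\sigma^2\sum_{k=1}^N k^4\ell_k^{-2}\big(T^2/4+(\cos(2\ell_kT)-1)/(8\ell_k^2)\big)$ are of lower order in $N$ and remain bounded in $T$, this is routine, and it is the only mild obstacle in an otherwise purely bookkeeping argument.
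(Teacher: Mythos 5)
Your proposal is correct and is precisely the argument the paper leaves implicit: the corollary is stated immediately after the theorem on $R_{N,T}F_{N,T}$, and the intended deduction is exactly your chain $\widehat{\lambda}_{N,T}-\lambda=-\sigma F_{N,T}$, the Fisher-information asymptotics $R_{N,T}^2=\bE J_{N,T}\simeq \sigma^2 N^3T^2/(12\lambda)$, and Slutsky's theorem. Your closing remark about the error term is also sound, since the oscillatory correction in $\bE J_{N,T}$ is $O(N)$ uniformly in $T$, so the ratio $TN^{3/2}/R_{N,T}$ indeed converges as both indices grow.
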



\section{Asymptotic properties of the discretized MLE}
\label{sect-Discrete}

In this section, we investigate statistical properties of a discretized version of MLE. This properties are consistency and asymptotic normality in a weaker senses. Before the main results in this section, we prove several lemmas that we will use afterward.

We assume that the Fourier modes $u_k(t)$, $k\geq 1$, are observed at a uniform time grid
$$
0=t_0<t_1<\cdots < t_M =T,\quad \textrm{with}\ \Delta t:=t_i-t_{i-1}=\displaystyle\frac{T}{M},\
i=1,\ldots,M.
$$
We consider the discretized MLE $\widehat{\lambda}_{N,M}$ defined by
\begin{equation*}
\widehat{\lambda}_{N,M}:=-\frac{\sum_{k=1}^N k^2\sum_{i=1}^M
u_k(t_{i-1})\left[v_k(t_i)-v_k(t_{i-1})\right]}
{\sum_{k=1}^N k^4\sum_{i=1}^M u_k^2(t_{i-1})\Delta t}.
\end{equation*}
We are interested in studying the asymptotic properties of $\widehat{\lambda}_{N,M}$, as $N,M\to\infty$.

For simplicity of writing, we also introduce the following notations:
\begin{align*}
\xi_{N,M}&:=\sum_{k=1}^N k^2 \sum_{i=1}^M u_k(t_{i-1})\left(w_k(t_i)-w_k(t_{i-1})\right), &
\xi_{N}&=\sum_{k=1}^N k^2\int_0^T u_k(t)d w_k(t),\\
J_{N,M}&:=\sum_{k=1}^N k^4\sum_{i=1}^M u_k^2(t_{i-1})\Delta t,
& J_{N} & =\sum_{k=1}^N k^4\int_0^T u_k^2(t)d t,\\
V_{N,M}&:=\sum_{k=1}^N k^4 \sum_{i=1}^M u_k(t_{i-1})
\int_{t_{i-1}}^{t_i} \left(u_k(t)-u_k(t_{i-1})\right)dt, &  \Upsilon&:=  \left(\frac{T^2}{12\lambda}\right)^{1/2}.
\end{align*}

\subsection{Technical lemmas}
A key step in the proofs of the main results is to write $\widehat{\lambda}_{N,M}$ as
\begin{equation}\label{Dis-MLE-theta}
\widehat{\lambda}_{N,M}-\lambda = \frac{\lambda V_{N,M}}{J_{N,M}} -\frac{\sigma \xi_{N,M}}{J_{N,M}}.
\end{equation}

Now, we present some technical results whose proofs are presented in  \ref{Appendix:proofs}.

\begin{lemma}\label{lemma:Discr1}
For $0<t<s\leq T$ and $k, l\in \mathbb{N}$, we have that
\begin{align}
\mathbb{E}(u_k(t)u_k(s))& =\frac{\sigma^2}{2\ell_k^2} \left[t\cos (\ell_k (t-s))+\frac{1}{2}\left(\sin (\ell_k (t-s))-\sin (\ell_k (t+s)) \right)\right], \label{eq:productuk} \\
\mathbb{E}|u_k(t)-u_k(s)|^{2l}&\leq C(l)\left(\sigma^2\ell_k^{-1}\right)^l T^l |t-s|^l, \label{eq:continuityuk} \\
\mathbb{E}|u_k(t)+u_k(s)|^{2l}&\leq \bar C(l)\left(\sigma^2\ell_k^{-2}\right)^l T^l, \label{eq:bddnessuk}
\end{align}
for some $C(l), \bar C(l)>0$.
\end{lemma}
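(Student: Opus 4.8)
The plan is to exploit the fact that, by the representation \eqref{Osc3} in Theorem~\ref{exis}, each $u_k(t)=\frac{\sigma}{\ell_k}\int_0^t\sin(\ell_k(t-r))\,dw_k(r)$ is a Wiener integral of a deterministic kernel against the Brownian motion $w_k$. Consequently $\{u_k(t)\}_{t\in[0,T]}$ is a centered Gaussian process, and in particular both the increment $u_k(s)-u_k(t)$ and the sum $u_k(s)+u_k(t)$ are centered Gaussian variables. This reduces every $2l$-th moment to a power of a variance through the Gaussian identity $\mathbb{E}|X|^{2l}=(2l-1)!!\,(\mathbb{E}X^2)^l$, so the whole lemma becomes three variance/covariance computations, each one handled by the It\^o isometry.

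For the covariance identity I would take $t<s$ and write the product of the two Wiener integrals; since both are driven by the same $w_k$ and the integrands are deterministic, the It\^o isometry (integrating up to $t\wedge s=t$) yields
\[
\mathbb{E}(u_k(t)u_k(s))=\frac{\sigma^2}{\ell_k^2}\int_0^{t}\sin(\ell_k(t-r))\sin(\ell_k(s-r))\,dr .
\]
Applying the product-to-sum formula $\sin A\sin B=\tfrac12(\cos(A-B)-\cos(A+B))$ with $A-B=\ell_k(t-s)$ constant and $A+B=\ell_k(t+s-2r)$, the first term integrates to $\tfrac{t}{2}\cos(\ell_k(t-s))$ and the second to a combination of $\sin(\ell_k(t\pm s))$; collecting terms gives \eqref{eq:productuk}. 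I would sanity-check the outcome by setting $s=t$ and recovering \eqref{eq:2ndmomentuk}.

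For the increment bound \eqref{eq:continuityuk}, after reducing to $\Var(u_k(s)-u_k(t))$ I would split the It\^o isometry over $[0,t]$, where the two kernels are subtracted, and $[t,s]$, where only the kernel of $u_k(s)$ survives:
\[
\frac{\ell_k^2}{\sigma^2}\Var(u_k(s)-u_k(t))=\int_0^{t}\big(\sin(\ell_k(s-r))-\sin(\ell_k(t-r))\big)^2dr+\int_t^{s}\sin^2(\ell_k(s-r))\,dr .
\]
On the first integral I would use $\sin(\ell_k(s-r))-\sin(\ell_k(t-r))=2\cos(\cdots)\sin(\ell_k\tfrac{s-t}{2})$; the crucial point in both integrals is the elementary interpolation $\sin^2 x\le\min(1,x^2)\le|x|$, which is exactly what converts the two naive bounds ($\sin^2\le1$ or $\sin^2\le x^2$) into the balanced factor $\ell_k|s-t|$. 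Both pieces then give $\Var(u_k(s)-u_k(t))\le C\sigma^2\ell_k^{-1}T|s-t|$, and raising to the power $l$ produces \eqref{eq:continuityuk}.

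For the boundedness bound \eqref{eq:bddnessuk} I would expand $\Var(u_k(s)+u_k(t))=\mathbb{E}u_k^2(t)+\mathbb{E}u_k^2(s)+2\mathbb{E}(u_k(t)u_k(s))$. Using \eqref{eq:2ndmomentuk} together with $|\sin x/x|\le1$ gives $\mathbb{E}u_k^2(\cdot)\le\sigma^2\ell_k^{-2}T$, and Cauchy--Schwarz controls the cross term by the same quantity, so $\Var(u_k(s)+u_k(t))\le4\sigma^2\ell_k^{-2}T$; raising to the power $l$ yields \eqref{eq:bddnessuk}. The only genuinely delicate step is the increment estimate: getting all three scales $\ell_k^{-1}$, $T$ and $|t-s|$ to appear to the first power inside the bracket requires the interpolated inequality $\sin^2 x\le|x|$ rather than either crude bound alone; the rest is routine Gaussian and trigonometric bookkeeping.
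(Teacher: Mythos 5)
Your proposal is correct, and for two of the three claims it follows the paper's own proof essentially verbatim: the covariance formula \eqref{eq:productuk} is obtained there exactly as you do (It\^{o} isometry on $[0,t]$ followed by the product-to-sum identity), and the reduction of the $2l$-th moments to the case $l=1$ via Gaussianity is also the paper's first step. Where you genuinely diverge is the increment bound \eqref{eq:continuityuk}: the paper never returns to the stochastic integral, but instead expands $\mathbb{E}|u_k(t)-u_k(s)|^2=\mathbb{E}u_k^2(t)+\mathbb{E}u_k^2(s)-2\mathbb{E}[u_k(t)u_k(s)]$ using the closed-form expressions \eqref{eq:2ndmomentuk} and \eqref{eq:productuk}, and then extracts the factor $\ell_k|t-s|$ from Lipschitz bounds on $\sin$ and $\cos$; you instead apply the It\^{o} isometry directly to the increment, split the kernel over $[0,t]$ and $[t,s]$, and use $\sin^2 x\le\min(1,x^2)\le|x|$. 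Both routes yield $C\sigma^2\ell_k^{-1}T|t-s|$; yours is arguably cleaner, since it avoids the paper's absolute-value bookkeeping (one line of which is displayed as an equality although it is really a triangle-inequality estimate), while the paper's route has the advantage of recycling quantities it has already computed. For \eqref{eq:bddnessuk} the paper only says ``similarly,'' so your explicit argument ($\mathbb{E}u_k^2(\cdot)\le\sigma^2\ell_k^{-2}T$ plus Cauchy--Schwarz on the cross term) is a legitimate way of filling in that omission. Finally, your sanity check at $s=t$ is not idle: it exposes a typo in the displayed statement. The bracket that both your derivation and the paper's proof actually produce is $t\cos(\ell_k(t-s))+\frac{1}{2\ell_k}\left(\sin(\ell_k(s-t))-\sin(\ell_k(s+t))\right)$, i.e.\ the prefactor of the sine difference is $\frac{1}{2\ell_k}$, not $\frac{1}{2}$ (and the first sine appears with argument $s-t$); with the statement as printed, setting $s=t$ would not recover \eqref{eq:2ndmomentuk}.
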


\begin{lemma}\label{lemma:Y-Difference}
For each $T>0,\ N,M\in \mathbb{N}$,
there exist constants $C>0$ independent of $M$ such that
\begin{align}
\mathbb{E}|\xi_{N,M}-\xi_{N}|^2 &\simeq C \frac{T^2 N^{4}}{M}, \label{eq:Y-NMT}\\
\mathbb{E}\left|J_{N,M}-J_{N}\right|^2 &\simeq  C \frac{T^5N^{6}}{M}, \label{eq:I-NMT}\\
\mathbb{E}|V_{N,M}|^2&\simeq C\frac{T^5N^6}{M}, \label{eq:V-NMT}
\end{align}
as $N\rightarrow \infty$.
\end{lemma}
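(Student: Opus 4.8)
The goal is to establish three second-moment asymptotics \eqref{eq:Y-NMT}, \eqref{eq:I-NMT}, \eqref{eq:V-NMT} quantifying the discretization error of the MLE as $N\to\infty$ with the scaling in $M$ made explicit. The plan is to treat each quantity as a sum over Fourier modes $k$ of a one-mode discretization error, compute the leading-order second moment of each single-mode term using the explicit Gaussian structure of $u_k$ recorded in \eqref{eq:2ndmomentuk} and Lemma~\ref{lemma:Discr1}, and then sum over $k=1,\dots,N$ to read off the power of $N$.

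First I would handle \eqref{eq:Y-NMT}. The difference $\xi_{N,M}-\xi_N$ is a sum over $k$ of stochastic-integral discretization errors $k^2\sum_{i=1}^M\int_{t_{i-1}}^{t_i}\big(u_k(t_{i-1})-u_k(t)\big)\,dw_k(t)$. Since the $w_k$ are independent across $k$ and the Itô integrals are orthogonal across time subintervals, the second moment factorizes: by Itô isometry the variance equals $\sum_{k=1}^N k^4\sum_{i=1}^M\int_{t_{i-1}}^{t_i}\mathbb{E}|u_k(t)-u_k(t_{i-1})|^2\,dt$. Here I would invoke the continuity estimate \eqref{eq:continuityuk} with $l=1$, giving $\mathbb{E}|u_k(t)-u_k(t_{i-1})|^2\le C\sigma^2\ell_k^{-1}T|t-t_{i-1}|$; integrating over each subinterval of length $\Delta t=T/M$ produces a factor $(\Delta t)^2=T^2/M^2$, and summing the $M$ subintervals gives $T^2/M$. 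The $k$-dependence is $k^4\cdot\ell_k^{-1}=k^4\cdot(\sqrt\lambda k)^{-1}\sim k^3$, so $\sum_{k=1}^N k^3\sim N^4/4$, yielding the claimed $N^4/M$ rate. The sharp asymptotic equivalence (as opposed to an upper bound) requires replacing \eqref{eq:continuityuk} by the exact leading term of $\mathbb{E}|u_k(t)-u_k(t_{i-1})|^2$, extracted from \eqref{eq:productuk}.

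For \eqref{eq:I-NMT} and \eqref{eq:V-NMT} the terms are no longer martingale differences, so I would compute second moments directly using Gaussianity (Isserlis/Wick, as already used in the preceding lemma to get $\mathbb{E}u_k^4=3\mathbb{E}^2u_k^2$). For $J_{N,M}-J_N=\sum_k k^4\sum_i\int_{t_{i-1}}^{t_i}\big(u_k^2(t_{i-1})-u_k^2(t)\big)\,dt$, I would write $u_k^2(t_{i-1})-u_k^2(t)=\big(u_k(t_{i-1})-u_k(t)\big)\big(u_k(t_{i-1})+u_k(t)\big)$ and bound its fourth-moment contributions by Cauchy--Schwarz combined with the two estimates \eqref{eq:continuityuk} and \eqref{eq:bddnessuk}: the ``difference'' factor supplies $\sigma^2\ell_k^{-1}T|t-t_{i-1}|$ and the ``sum'' factor supplies $\sigma^2\ell_k^{-2}T$, so each single-mode variance scales like $k^8\cdot\ell_k^{-1}\ell_k^{-2}\cdot(\text{time factor})$. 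Tracking the time integrations ($\Delta t$ per subinterval, $M$ subintervals, plus the extra powers of $T$) should produce $T^5/M$, while $k^8\cdot\ell_k^{-3}\sim k^5$ sums to $\sim N^6/6$, giving $T^5N^6/M$. The computation for $V_{N,M}$ is structurally identical, since $V_{N,M}=\sum_k k^4\sum_i u_k(t_{i-1})\int_{t_{i-1}}^{t_i}\big(u_k(t)-u_k(t_{i-1})\big)\,dt$ pairs a bounded factor $u_k(t_{i-1})$ (use \eqref{eq:bddnessuk}) with the difference factor (use \eqref{eq:continuityuk}), yielding the same $k^5$ summand and the same $T^5N^6/M$ rate.

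The main obstacle is not the order-of-magnitude bookkeeping but upgrading the inequalities \eqref{eq:continuityuk}--\eqref{eq:bddnessuk} to the \emph{exact} leading coefficients needed for the $\simeq$ (asymptotic equivalence) claims rather than mere $O(\cdot)$ bounds. For $J_{N,M}-J_N$ and $V_{N,M}$ one must also control cross-terms between distinct time subintervals $i\ne i'$ within the same mode $k$ (the integrands are not independent), and show these are lower order; here the decorrelation of $u_k(t)$ and $u_k(s)$ for $|t-s|$ large, visible in the oscillatory/decaying structure of \eqref{eq:productuk}, is what makes the diagonal $i=i'$ terms dominant. I would argue that the off-diagonal covariances carry an extra factor of $\Delta t$ relative to the diagonal and hence do not affect the leading constant, so that the single-mode variance is determined by the diagonal contribution computed above. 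Finally I would justify passing the mode-wise equivalences through the sum $\sum_{k=1}^N$ uniformly in $M$, using that the per-mode asymptotics in $k$ hold with constants independent of $M$, to obtain the stated rates as $N\to\infty$.
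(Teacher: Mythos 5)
Your treatment of \eqref{eq:Y-NMT} is correct and coincides with the paper's: by independence across $k$ and It\^o isometry the cross terms vanish exactly, and \eqref{eq:continuityuk} with $l=1$ gives the $N^4/M$ rate. The genuine gap is in your treatment of \eqref{eq:I-NMT} and \eqref{eq:V-NMT}, where you claim that the cross terms between distinct subintervals carry an extra factor of $\Delta t$ and ``hence do not affect the leading constant, so that the single-mode variance is determined by the diagonal contribution.'' This claim is both internally inconsistent and incompatible with the rate being proved. Write $U_i(t):=u_k^2(t_{i-1})-u_k^2(t)$ and run the diagonal bookkeeping honestly: by Cauchy--Schwarz each subinterval contributes at most $\Delta t\int_{t_{i-1}}^{t_i}\mathbb{E}U_i^2(t)\,dt\lesssim \ell_k^{-3}T^2(\Delta t)^3$, and summing $M$ subintervals gives $\ell_k^{-3}T^2\,M(\Delta t)^3=\ell_k^{-3}T^5/M^2$ per mode, hence $O(T^5N^6/M^2)$ after summing $k^8\ell_k^{-3}\sim k^5$ over $k\leq N$. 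That is a factor $1/M$ \emph{smaller} than the claimed $T^5N^6/M$: if the off-diagonal terms really were negligible, the lemma's rate would be $1/M^2$, not $1/M$. Your ``$T^5/M$ from the diagonal'' arises only if you add the per-subinterval $L^2(\Omega)$ norms coherently (Minkowski's inequality), i.e.\ if you allow the cross terms to be of full size---which is precisely the opposite of what you assert.

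The decorrelation heuristic fails here because $u_k$ is a $C^1$ process in time ($\dot u_k=v_k$), so the errors $\int_{t_{i-1}}^{t_i}U_i(t)\,dt$ on different subintervals are strongly positively correlated; there is no martingale-type cancellation, in contrast to $\xi_{N,M}-\xi_N$, where It\^o isometry removes cross terms exactly. Moreover, even where the off-diagonal covariance does gain a factor $\Delta t$ per pair, this is offset by the $\sim M^2$ off-diagonal pairs against only $M$ diagonal ones, so the cross terms are at least comparable to (in fact dominate) the diagonal sum. Accordingly, the paper's proof in \ref{Appendix:proofs} does not discard them: it isolates them as a separate term ($I_2$), expands $\mathbb{E}[U_i(t)U_j(s)]$ by Wick's lemma into products of pair covariances, and bounds those via \eqref{eq:productuk} and \eqref{eq:bddnessuk}; that contribution is what carries the $1/M$ in \eqref{eq:I-NMT} and \eqref{eq:V-NMT}. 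To repair your argument you must either carry out this cross-term expansion, or---more simply---bound the per-mode error by Minkowski's inequality, $\mathbb{E}\bigl|\sum_i\int_{t_{i-1}}^{t_i}U_i(t)\,dt\bigr|^2\leq\bigl(\sum_i(\mathbb{E}|\int_{t_{i-1}}^{t_i}U_i(t)\,dt|^2)^{1/2}\bigr)^2\lesssim \ell_k^{-3}T^5/M$, which yields the claimed rate directly. Your closing observation that the stated $\simeq$ equivalences require exact leading constants rather than $O(\cdot)$ bounds is a fair caveat, but it does not substitute for the missing cross-term control.
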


We also present three well-known inequalities that we will use in the proof of the asymptotic properties of the discretized MLE.

\begin{proposition}
Let  $X,Y,Z$ be random variables, and assume that $Z>0$ a.s.. For any $\varepsilon>0$ and $\delta\in(0,\varepsilon/2)$, the following inequalities hold true.
\begin{align}
& \mathbb{P}(|Y/Z|>\varepsilon) \leq \mathbb{P}(|Y|>\delta) + \mathbb{P}(|Z-1|> (\varepsilon-\delta)/\varepsilon), \label{eq:simpleOne}\\
&  \sup_{x\in\mathbb{R}} \Big|  \mathbb{P}(X+Y\leq x) - \Phi(x)\Big|   \leq \sup_{x\in\mathbb{R}} \Big|  \mathbb{P}(X\leq x) - \Phi(x)\Big| + \mathbb{P}(|Y|>\varepsilon) + \varepsilon, \label{eq:simple2} \\
&  \sup_{x\in\mathbb{R}} \Big|  \mathbb{P}(Y/Z\leq x) - \Phi(x)\Big|   \leq \sup_{x\in\mathbb{R}} \Big|  \mathbb{P}(Y\leq x) - \Phi(x)\Big| + \mathbb{P}(|Z-1|>\varepsilon) + \varepsilon,  \label{eq:simple3}
\end{align}
where $\Phi$ denotes the distribution function of a standard Gaussian random variable.
\end{proposition}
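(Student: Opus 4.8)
The plan is to derive all three estimates from elementary set inclusions, pushed to probabilities by subadditivity, with the last two also using boundedness of the standard Gaussian density $\phi=\Phi'$. I begin with \eqref{eq:simpleOne}, which I would prove by contraposition on the defining events. Suppose $|Y|\le\delta$ and $|Z-1|\le(\varepsilon-\delta)/\varepsilon$. Because $\delta\in(0,\varepsilon/2)$ we have $(\varepsilon-\delta)/\varepsilon\in(1/2,1)$, so $Z\ge 1-(\varepsilon-\delta)/\varepsilon=\delta/\varepsilon>0$, whence $|Y/Z|=|Y|/Z\le\delta/(\delta/\varepsilon)=\varepsilon$. Thus $\{|Y/Z|>\varepsilon\}\subseteq\{|Y|>\delta\}\cup\{|Z-1|>(\varepsilon-\delta)/\varepsilon\}$, and subadditivity of $\mathbb{P}$ gives \eqref{eq:simpleOne}.

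For \eqref{eq:simple2} I would treat the upper and lower tails of the CDF separately. Upper bound: the inclusion $\{X+Y\le x\}\subseteq\{X\le x+\varepsilon\}\cup\{|Y|>\varepsilon\}$ yields $\mathbb{P}(X+Y\le x)\le\mathbb{P}(X\le x+\varepsilon)+\mathbb{P}(|Y|>\varepsilon)$; inserting $\pm\Phi(x+\varepsilon)$ and using $\Phi(x+\varepsilon)-\Phi(x)=\int_x^{x+\varepsilon}\phi\le\varepsilon$ (since $\phi\le 1/\sqrt{2\pi}<1$) gives $\mathbb{P}(X+Y\le x)-\Phi(x)\le\sup_{y}|\mathbb{P}(X\le y)-\Phi(y)|+\mathbb{P}(|Y|>\varepsilon)+\varepsilon$. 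Lower bound: from $\{X\le x-\varepsilon\}\cap\{|Y|\le\varepsilon\}\subseteq\{X+Y\le x\}$ one gets $\mathbb{P}(X+Y\le x)\ge\mathbb{P}(X\le x-\varepsilon)-\mathbb{P}(|Y|>\varepsilon)$, and the same manipulation with $\Phi(x-\varepsilon)-\Phi(x)\ge-\varepsilon$ produces the matching lower estimate. Taking the supremum over $x$ closes \eqref{eq:simple2}.

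For \eqref{eq:simple3} I would use that $Z>0$ a.s., so $\{Y/Z\le x\}=\{Y\le xZ\}$, then localize on $\{|Z-1|\le\varepsilon\}$ at the cost of the term $\mathbb{P}(|Z-1|>\varepsilon)$ and compare $xZ$ with $x$. On $\{|Z-1|\le\varepsilon\}$ one has $x(1-\varepsilon)\le xZ\le x(1+\varepsilon)$ when $x\ge0$, with the inequalities reversed when $x<0$. This gives one-sided inclusions such as $\{Y/Z\le x\}\cap\{|Z-1|\le\varepsilon\}\subseteq\{Y\le x(1+\varepsilon)\}$ for $x\ge0$ (and the analogues in the remaining three sign/direction cases). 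Passing to probabilities and inserting $\Phi$ at the shifted argument as in \eqref{eq:simple2} reduces the whole claim to bounding the Gaussian increment $\Phi(x(1\pm\varepsilon))-\Phi(x)$ by $\varepsilon$, after which one takes $\sup_x$ and recombines.

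The main obstacle is exactly this last increment bound. Here the shift is \emph{multiplicative}, not additive, so the crude estimate width$\,\times\sup\phi=|x|\varepsilon/\sqrt{2\pi}$ is useless for large $|x|$. The fix is to use that $t\mapsto |t|\phi(t)$ is bounded: by the mean value theorem $\Phi(x(1\pm\varepsilon))-\Phi(x)\le |x|\varepsilon\,\phi(\xi)$ for $\xi$ between the two endpoints, and $\sup_{t}|t|\phi(t)=e^{-1/2}/\sqrt{2\pi}<1$ controls cleanly every term in which the shift is by $(1+\varepsilon)$ (namely the upper bound for $x\ge0$ and the lower bound for $x<0$). The $(1-\varepsilon)$-type shifts carry an extra factor $(1-\varepsilon)^{-1}$ from the endpoint comparison, which is harmless in the small-$\varepsilon$ regime in which the proposition is actually applied; and for $\varepsilon\ge 1$ the inequality is trivial, since its right-hand side already exceeds $1$ while the left-hand side never does. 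This is the only point requiring genuine care, the rest being routine set-theoretic bookkeeping.
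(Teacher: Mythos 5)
You cannot be compared against the paper's own argument here, because the paper offers none: these three inequalities are presented as ``well-known'' and used without proof (they play the same role in \cite{CDK}), so your elementary derivation is self-contained added content rather than an alternative route. Your proofs of \eqref{eq:simpleOne} and \eqref{eq:simple2} are complete and correct: the inclusion $\{|Y|\le\delta\}\cap\{|Z-1|\le(\varepsilon-\delta)/\varepsilon\}\subseteq\{|Y/Z|\le\varepsilon\}$ (valid since $Z\ge\delta/\varepsilon>0$ on that event) plus subadditivity gives the first, and your two one-sided inclusions combined with the Lipschitz bound $\sup_t\phi(t)=1/\sqrt{2\pi}<1$ for $\Phi$ give the second; incidentally, your argument for \eqref{eq:simpleOne} never uses $\delta<\varepsilon/2$, so it proves the inequality for any $\delta\in(0,\varepsilon)$, which is harmless.

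The one genuine soft spot is exactly where you flag it, in \eqref{eq:simple3}. Your mean-value bound handles the $(1+\varepsilon)$-shifts for all $\varepsilon>0$ via $\sup_t|t|\phi(t)=e^{-1/2}/\sqrt{2\pi}<1$, but for the $(1-\varepsilon)$-shifts it yields only $\frac{\varepsilon}{1-\varepsilon}\,e^{-1/2}/\sqrt{2\pi}$, which is $\le\varepsilon$ only when $\varepsilon\le 1-e^{-1/2}/\sqrt{2\pi}\approx 0.76$; together with your trivial case $\varepsilon\ge 1$, this leaves the window roughly $\varepsilon\in(0.76,1)$ uncovered, and appealing to ``the small-$\varepsilon$ regime in which the proposition is applied'' does not prove the statement as written, which asserts the bound for \emph{any} $\varepsilon>0$. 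The repair is one line and should be inserted: for $x\ge 0$ and $\varepsilon\in(1/2,1)$ one has $0\le x(1-\varepsilon)\le x$, hence $\Phi(x)-\Phi\big(x(1-\varepsilon)\big)\le \Phi(x)-\Phi(0)\le 1/2<\varepsilon$, and symmetrically for $x<0$, $\Phi\big(x(1-\varepsilon)\big)-\Phi(x)\le \Phi(0)-\Phi(x)\le 1/2<\varepsilon$; with this observation your case analysis covers all $\varepsilon>0$ and the proof of \eqref{eq:simple3} is complete. (Two further small points you handle correctly and should keep explicit: $Z>0$ a.s.\ is what licenses the identity $\{Y/Z\le x\}=\{Y\le xZ\}$ even off the event $\{|Z-1|\le\varepsilon\}$, and the monotonicity of $\phi$ away from the origin is what lets the mean-value point $\xi$ be compared with the correct endpoint in each of the four sign/direction cases.)
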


\subsection{Asymptotic properties}

In this section we prove the asymptotic weak consistency  and an asymptotic normality result for the discretized estimator. 

\begin{remark}
On the proof of Theorem \ref{main-th1}, the authors on \cite{LiuL} show that
 \begin{equation}\label{eq:strongLLN-1}
\lim_{N \to\infty} \frac{\xi_{N}}{N^3\sigma^2\Upsilon^2} =0, \quad \mbox{and} \quad
\lim_{N \to\infty}\frac{J_{N}} {N^3\sigma^2\Upsilon^2}= 1,
\end{equation}
with probability one. Moreover, we have the following limit in distribution
\begin{align}
    \lim_{N\to \infty} \frac{\xi_{N,T}}{\sqrt{\bE J_{N,T}}}&=\mathcal{N}(0,1)\label{normal-1l2}
\end{align}
\end{remark}

With these at hand, we are ready to show that $\widehat{\lambda}_{N,M}$ is a weakly consistent estimator of $\lambda$.
\begin{theorem}\label{th:DiscrtConsistency}
Assume \eqref{coef} and \eqref{fron}. Then,
\begin{equation}\label{eq:thetaNTM-Consist}
\widehat{\lambda}_{N,M} \rightarrow \lambda, \quad \mbox{in probability},
\end{equation}
as  $N,M\to \infty$ while $T$ is fixed.
\end{theorem}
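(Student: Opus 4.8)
The plan is to work directly from the decomposition \eqref{Dis-MLE-theta} and prove convergence in probability by controlling a normalized numerator and denominator separately. Writing
\[
\widehat{\lambda}_{N,M}-\lambda = \frac{\lambda V_{N,M}-\sigma\xi_{N,M}}{J_{N,M}} = \frac{Y_{N,M}}{Z_{N,M}},
\]
I would normalize both by $R_{N,T}^2=\bE J_{N,T}\simeq N^3\sigma^2\Upsilon^2$, setting $Y_{N,M}:=(\lambda V_{N,M}-\sigma\xi_{N,M})/R_{N,T}^2$ and $Z_{N,M}:=J_{N,M}/R_{N,T}^2$. The endgame is to show $Y_{N,M}\to 0$ and $Z_{N,M}\to 1$ in probability and then invoke inequality \eqref{eq:simpleOne} with $Y=Y_{N,M}$, $Z=Z_{N,M}$: for every $\varepsilon>0$ and $\delta\in(0,\varepsilon/2)$,
\[
\mathbb{P}\big(|\widehat{\lambda}_{N,M}-\lambda|>\varepsilon\big)\le \mathbb{P}(|Y_{N,M}|>\delta)+\mathbb{P}\big(|Z_{N,M}-1|>(\varepsilon-\delta)/\varepsilon\big),
\]
where both right-hand terms vanish as $N,M\to\infty$.

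For the denominator I would split $Z_{N,M}=(J_{N,M}-J_N)/R_{N,T}^2 + J_N/R_{N,T}^2$. The second term converges to $1$ almost surely, by combining the limit $J_N/(N^3\sigma^2\Upsilon^2)\to 1$ from the remark with $R_{N,T}^2\simeq N^3\sigma^2\Upsilon^2$. For the first term I would use Chebyshev together with \eqref{eq:I-NMT}: since $\bE|J_{N,M}-J_N|^2\simeq CT^5N^6/M$ while $(R_{N,T}^2)^2\simeq N^6\sigma^4\Upsilon^4$, one gets $\bE|(J_{N,M}-J_N)/R_{N,T}^2|^2\le C'T/M\to 0$ as $M\to\infty$, so this term vanishes in $L^2$ and hence in probability. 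Thus $Z_{N,M}\to 1$ in probability.

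For the numerator I would treat the two summands of $Y_{N,M}$ separately. Estimate \eqref{eq:V-NMT} gives $\bE|V_{N,M}|^2\simeq CT^5N^6/M$, so exactly as above $\bE|V_{N,M}/R_{N,T}^2|^2\le C'T/M\to 0$. For the $\xi$ part I would split $\xi_{N,M}/R_{N,T}^2=(\xi_{N,M}-\xi_N)/R_{N,T}^2+\xi_N/R_{N,T}^2$; the first tends to $0$ in $L^2$ by \eqref{eq:Y-NMT}, which after normalization is of order $T^2/(N^2M)$, and the second tends to $0$ almost surely by the remark's limit $\xi_N/(N^3\sigma^2\Upsilon^2)\to 0$. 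Hence $Y_{N,M}\to 0$ in probability, which closes the argument via the displayed inequality.

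The argument is otherwise routine second-moment bookkeeping; the one point that requires care is the double limit. The discretization-error terms $J_{N,M}-J_N$, $\xi_{N,M}-\xi_N$ and $V_{N,M}$ are controlled only through the explicit factor $1/M$ in Lemma \ref{lemma:Y-Difference} and therefore need $M\to\infty$, whereas the continuous-part terms $J_N/R_{N,T}^2$ and $\xi_N/R_{N,T}^2$ rely on the almost-sure limits of \cite{LiuL}, which need $N\to\infty$. Keeping these two mechanisms separate, and checking that the constants implicit in the ``$\simeq$'' asymptotics of Lemma \ref{lemma:Y-Difference} are uniform in $M$ (they are, since $M$ enters only through $1/M$), is the main thing to get right before assembling everything through \eqref{eq:simpleOne}.
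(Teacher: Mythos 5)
Your proposal is correct and follows essentially the same route as the paper: the same decomposition \eqref{Dis-MLE-theta}, normalization by $N^3\sigma^2\Upsilon^2$ (your $R_{N,T}^2$ is asymptotically the same quantity), inequality \eqref{eq:simpleOne}, Chebyshev bounds via Lemma~\ref{lemma:Y-Difference} for the discretization errors, and the strong-law limits \eqref{eq:strongLLN-1} for the continuous-time terms. The only differences are cosmetic (you apply \eqref{eq:simpleOne} once to the combined numerator rather than splitting first, and your stated order $T^2/(N^2M)$ for the normalized $\xi$-difference should be $1/(T^2N^2M)$, which is immaterial since $T$ is fixed).
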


\begin{proof}
Let $\bar L := \mathbb{P}\left(\left|\widehat{\lambda}_{N,M}-\lambda\right|>\varepsilon\right)$.  In view of \eqref{Dis-MLE-theta}, we note that
\begin{align*}
\bar L
&\leq \mathbb{P}\left(\left|\frac{\sigma \xi_{N,M}}{J_{N,M}}\right|>\varepsilon/2\right)
+\mathbb{P}\left(\left|\frac{\lambda V_{N,M}}{J_{N,M}}\right|>\varepsilon/2\right).
\end{align*}
Consequently, for an arbitrary fixed $\delta\in(0,\varepsilon/2)$, we have, using \eqref{eq:simpleOne},
\begin{align*}
\bar L
&\leq \mathbb{P}\left(\frac{\lambda|V_{N,M}|}{N^3\sigma^2\Upsilon^2}>\delta\right)
+\mathbb{P}\left(\frac{|\xi_{N,M}|}{N^3\sigma\Upsilon^2}>\delta\right) \\
&\qquad+2\mathbb{P}\left(\left|\frac{J_{N,M}}{N^3\sigma^2\Upsilon^2}-1\right|>\frac{\varepsilon-2\delta}{\varepsilon}\right)\\
& =: L_1 +L_2 +2L_3.
\end{align*}
By Chebyshev's inequality, and \eqref{eq:V-NMT}, we have that, for some constant (that may depend on $\delta$) $C_1(\delta)>0$,
$$
L_1 \simeq C_1(\delta)\frac{T}{M},
$$
as $N\rightarrow \infty$. As far as $L_2$, we write
\begin{align*}
L_2 \leq \mathbb{P}\left(
\frac{|\xi_{N,M}-\xi_{N,T}|}{N^3\sigma\Upsilon^2}>\delta/2\right)+
\mathbb{P}\left(\frac{|\xi_{N,T}|}{N^3\sigma\Upsilon^2}>\delta/2\right) =: L_{21}+L_{22}.
\end{align*}
Again by Chebyshev inequality, and using \eqref{eq:Y-NMT}, we get $L_{21} \simeq C_2(\delta)/( T^2 N^2)$, as $N\rightarrow \infty$ for some $C_2(\delta)>0$. On the other hand, by \eqref{eq:strongLLN-1}, $L_{22}\to 0$, as $N\to \infty$. 
We treat $L_3$ similarly:
\begin{align*}
L_3\leq \mathbb{P}\left(\frac{|J_{N,M}-J_{N,T}|}{N^3\sigma^2\Upsilon^2}
>\frac{\varepsilon-2\delta}{2\varepsilon}\right)
+\mathbb{P}\left(\left|\frac{J_{N,T}}{N^3\sigma^2\Upsilon^2}-1\right|>\frac{\varepsilon-2\delta}{2\varepsilon}\right) =: L_{31}+L_{32}.
\end{align*}
In view of  \eqref{eq:I-NMT}, and Chebyshev inequality, we have the asymptotic behavior 
\begin{equation*}
L_{31}\simeq C_3(\varepsilon) \frac{T}{M},
\end{equation*}
as $N\rightarrow \infty$. By \eqref{eq:strongLLN-1}, we get that $L_{32}\to0$, as $N\to \infty$.
Hence, combining all the above bounds, we conclude that
\begin{equation*}
\bar L \simeq C(\varepsilon)\left(\frac{T}{ M} +
\frac{1}{T^2 N^2}\right),
\end{equation*}
as $N\rightarrow \infty$. Clearly, $\bar L\to 0$ for every $\varepsilon>0$,  as $N,M\to \infty$ when $T$ fixed.
This concludes the proof.
\end{proof}

We now focus on the proof of the asymptotic  normality result for discretized MLE $\widehat{\lambda}_{N,M}$.  It is not a surprise that the rate of convergence of $\widehat{\lambda}_{N,M}$ agrees with those from  continuous time setup, and thus asymptotically is optimal in the mean-square sense.  As usual, we denote by $\Phi$ the cumulative probability function of a standard Gaussian random variable.

\begin{theorem}\label{th:DiscrAsymNorm}
Assume \eqref{coef} and \eqref{fron}. Then,
\begin{equation}\label{eq:DiscrtAsymNorm1}
\sup_{x\in \mathbb{R}}\left|
\mathbb{P}\left(N^{3/2}\sigma\Upsilon\left(\lambda-\widehat{\lambda}_{N,M}\right)\leq x\right)-\Phi(x)
\right|\rightarrow 0,
\end{equation}
as $M\to \infty$ and $N$ sufficiently large while $T$ is fixed.
\end{theorem}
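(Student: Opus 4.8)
The plan is to reduce \eqref{eq:DiscrtAsymNorm1} to the continuous-time central limit theorem \eqref{normal-1l2}, which the Malliavin–Stein argument of Subsection~\ref{sect-MSap} already delivers in \emph{total variation}, and to absorb every discretization effect through the three elementary inequalities of the Proposition together with the $L^{2}$-rates of Lemma~\ref{lemma:Y-Difference}. Using \eqref{Dis-MLE-theta}, I would write the normalized error as a ratio
\begin{equation*}
N^{3/2}\sigma\Upsilon\bigl(\lambda-\widehat{\lambda}_{N,M}\bigr)=\frac{Y_{N,M}}{Z_{N,M}},\qquad Z_{N,M}:=\frac{J_{N,M}}{N^{3}\sigma^{2}\Upsilon^{2}},\quad Y_{N,M}:=\frac{\sigma\xi_{N,M}-\lambda V_{N,M}}{N^{3/2}\sigma\Upsilon}.
\end{equation*}
Since $\bE J_{N,T}=R_{N,T}^{2}\simeq N^{3}\sigma^{2}\Upsilon^{2}$, the deterministic scale $N^{3/2}\sigma\Upsilon$ is (up to a factor tending to one) the standard deviation $R_{N,T}$ of $\xi_{N,T}$; thus $Z_{N,M}$ should concentrate at $1$, while $Y_{N,M}$ should reduce, up to a bounded calibration constant and the discretization remainder, to the genuinely Gaussian object $\xi_{N,T}/R_{N,T}$.

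For the denominator I apply \eqref{eq:simple3}, which bounds $\sup_{x}|\mathbb{P}(Y_{N,M}/Z_{N,M}\le x)-\Phi(x)|$ by $\sup_{x}|\mathbb{P}(Y_{N,M}\le x)-\Phi(x)|+\mathbb{P}(|Z_{N,M}-1|>\varepsilon)+\varepsilon$. I split $Z_{N,M}-1=(J_{N,M}-J_{N})/(N^{3}\sigma^{2}\Upsilon^{2})+\bigl(J_{N}/(N^{3}\sigma^{2}\Upsilon^{2})-1\bigr)$: the first summand is controlled by Chebyshev and \eqref{eq:I-NMT}, giving a bound of order $T^{5}/M$ that vanishes as $M\to\infty$, while the second vanishes as $N\to\infty$ by the continuous-time law of large numbers \eqref{eq:strongLLN-1}. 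This is precisely the denominator concentration already exploited in Theorem~\ref{th:DiscrtConsistency}.

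For the numerator I apply \eqref{eq:simple2} with the splitting $Y_{N,M}=X_{N}+(Y_{N,M}-X_{N})$, where $X_{N}:=\xi_{N,T}/R_{N,T}$ is the continuous-time leading term and $Y_{N,M}-X_{N}$ gathers the sampling error $\xi_{N,M}-\xi_{N,T}$ and the drift correction $V_{N,M}$ (together with the deterministic calibration mismatch). Their second moments are supplied by \eqref{eq:Y-NMT} and \eqref{eq:V-NMT}; after dividing by $N^{3}\sigma^{2}\Upsilon^{2}$ they are of orders $1/(T^{2}N^{2})$ and $T/M$, so Chebyshev makes $\mathbb{P}(|Y_{N,M}-X_{N}|>\varepsilon)$ small for $N$ and $M$ large. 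The surviving term $\sup_{x}|\mathbb{P}(X_{N}\le x)-\Phi(x)|$ is the genuine probabilistic input: it tends to $0$ as $N\to\infty$, which is exactly the Kolmogorov-distance form of \eqref{normal-1l2}. This upgrade is legitimate because the Malliavin–Stein bound of Subsection~\ref{sect-MSap} gives $\xi_{N,T}/R_{N,T}\to\mathcal{N}(0,1)$ in total variation, and total-variation convergence dominates the supremum over $x$.

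Finally I would assemble the pieces with an $\varepsilon$-budget: given $\varepsilon>0$, first fix $N$ large (with $T$ held fixed) so that the central limit term $\sup_{x}|\mathbb{P}(X_{N}\le x)-\Phi(x)|$ and the $N$-dependent law-of-large-numbers contributions drop below $\varepsilon$, and only then send $M\to\infty$ to annihilate the discretization probabilities of orders $T^{5}/M$ and $T/M$; this reproduces the regime ``$M\to\infty$ and $N$ sufficiently large'' of the statement. \textbf{The main obstacle is the order of limits.} The leading central limit theorem is only qualitative—no Berry–Esseen rate in $N$ is extracted from the Malliavin–Stein step—so $\sup_{x}|\mathbb{P}(X_{N}\le x)-\Phi(x)|$ cannot be made small uniformly in $N$; meanwhile the $L^{2}$-bounds of Lemma~\ref{lemma:Y-Difference} couple $N$ and $M$ through the factor $N^{6}/M$, forcing one to commit to a fixed large $N$ before letting $M\to\infty$ and to check that the Chebyshev constants remain harmless at that already-fixed $N$. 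Keeping the three error budgets consistent across this non-interchangeable double limit is the delicate bookkeeping of the argument.
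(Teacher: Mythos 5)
Your proposal is correct and follows essentially the same route as the paper's own proof: the same ratio decomposition from \eqref{Dis-MLE-theta}, the same use of \eqref{eq:simple3} for the denominator and \eqref{eq:simple2} for the numerator, Chebyshev bounds via Lemma~\ref{lemma:Y-Difference}, the continuous-time inputs \eqref{eq:strongLLN-1} and \eqref{normal-1l2}, and the same ``fix $N$ large, then send $M\to\infty$'' ordering of the limits. The only cosmetic differences are your normalization of the leading term by $R_{N,T}$ rather than $N^{3/2}\sigma\Upsilon$ (asymptotically equivalent) and your explicit total-variation upgrade of \eqref{normal-1l2} to Kolmogorov distance, which the paper obtains implicitly since weak convergence to a continuous limit law already gives uniform convergence of distribution functions.
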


\begin{proof}
We denote the left hand side of \eqref{eq:DiscrtAsymNorm1} by $\bar K$, and using \eqref{Dis-MLE-theta}, we write it as
\begin{equation*} 
\bar K = \sup_{x\in \mathbb{R}}\left|
\mathbb{P}\left(N^{3/2}\sigma\Upsilon\frac{\sigma \xi_{N,M} - \lambda V_{N,M}}{J_{N,M}}
 \leq x\right)-\Phi(x)\right|.
\end{equation*}
Using \eqref{eq:simple3}, we continue
\begin{align*}
\bar K & \leq
\sup_{x\in \mathbb{R}}\left|
\mathbb{P}\left(\frac{\sigma \xi_{N,M} - \lambda V_{N,M}}{N^{3/2}\sigma\Upsilon}
 \leq x\right)-\Phi(x)\right| +
\mathbb{P}\left(\left|\frac{J_{N,M}}{N^{3}\sigma^2\Upsilon^2}-1\right|>\varepsilon \right)+\varepsilon  \\
& =: K_1 + K_2 +\varepsilon.
\end{align*}
Consequently, by applying \eqref{eq:simple2} , we obtain
\begin{align*}
  K_1 & \leq \sup_{x\in \mathbb{R}}\left| \mathbb{P}\left(\frac{\xi_{N,T}}{N^{3/2}\sigma\Upsilon}  \leq x\right)-\Phi(x) \right|
+\mathbb{P}\left(\frac{|\xi_{N,M}-\xi_{N,T}|}{N^{3/2}\sigma\Upsilon}
 >\varepsilon \right) \\
 & \qquad +  \mathbb{P}\left(\frac{\lambda|V_{N,M}|}{N^{3/2}\sigma\Upsilon}>\varepsilon \right) + 2\varepsilon
 =: K_{1,1} + K_{1,2} + K_{1,3} + 2\varepsilon.
\end{align*}
Be aware that  \eqref{normal-1l2} implies that
\begin{equation*}
\w\lim_{N\to \infty} \frac{\xi_{N,T}}
{N^{3/2}\sigma\Upsilon} =\mathcal{N}(0,1).
\end{equation*}
Thus, $K_{1,1}\to 0$, as $N\rightarrow \infty$.
By Chebyshev inequality and by \eqref{eq:Y-NMT} and  \eqref{eq:V-NMT}, we deduce
\begin{equation*}
K_{1,2}  \simeq C_1(\varepsilon) \frac{ N}{M}, \qquad K_{1,3} \simeq C_2(\varepsilon) \frac{T^{3} N^3}{M},
\end{equation*}
for some $C_1(\varepsilon), C_2(\varepsilon)>0$ and $N$ sufficiently large.

Similarly,
\begin{align*}
  K_2 & \leq \mathbb{P}\left(\left|\frac{J_{N,T}}{N^{3}\sigma^2 \Upsilon^2} - 1\right|>\frac{\varepsilon}{2} \right)
+\mathbb{P}\left(\frac{|J_{N,M}-J_{N,T}|}{N^{3}\sigma^2\Upsilon^2}
>\frac{\varepsilon}{2} \right)=: K_{2,1}+K_{2,2}.
\end{align*}
By \eqref{eq:strongLLN-1}, $K_{2,1}\to0$, as $N\to\infty$. On the other hand, by \eqref{eq:I-NMT}, $$
K_{2,1} \simeq C_3(\varepsilon) \frac{T^3 N^3}{M},
$$
for $N$ sufficiently large. Combining all the above, we conclude
$$
\bar K \leq C_4(\varepsilon)\left(\frac{N}{M} + \frac{T^{3} N^3}{M} \right),
$$
for $N$ sufficiently large. Since $\varepsilon>0$ was chosen arbitrarily, and since $C_5(\varepsilon)$ is independent of $M$,  we conclude that $\bar K\to 0$, as $M\to \infty$ for $N$ sufficiently large and $T$ fixed. The proof is complete.
\end{proof}


\section{Simulations}
\label{sect-sim}

\subsection{Simulation of the estimators}\label{simu}
This section is devoted to illustrate numerical compatutations of the solution of the equation and to show the properties of the discretized versions of the estimators proved in the second section. Furthermore, it serves as numerical verification of the results from the fourth section.

First, we introduce the discretized solution to the equation, then the discretization of both estimators and finally we simulate the discretized version of the speed estimator studied in the previous section.

First, we assume that the Fourier modes $\{u_k (t),v_k (t)\}_{k\geq 1}$, are observed at a uniform time grid
$$0=t_0<t_1<...<t_M =T,\quad \text{with} \Delta t:=t_i -t_{i-1 }=\frac{T}{M},i=1,...,M.$$
We consider the discretized MLE $\widehat{\lambda}_{N,M,T}$  defined in the last section.

Now, we will simulate the Fourier modes $\{u_k (t),v_k (t)\}$ for $1\leq k\leq N$. Remembering that $\{u_k (t),v_k (t)\}$ are solutions of the system \eqref{Osc1}, we can simulate that system by the Euler method  with the next expressions:
\begin{align*}
    u_k(t_{i+1})&=u_k(t_{i})+v_k(t_{i}) \Delta t,\\
    v_k(t_{i+1})&=v_k(t_{i})-\lambda k^2 u_k(t_{i})\Delta t+\sigma \big(w_k(t_{i+1})-w_k(t_i)\big),
\end{align*}
 for $i=1,..,M$ and $u_k (t_0)=0, v_k (t_0)=0$, where $\Delta t=t_{i+1}-t_i =\frac{T}{M}$ and $$w_k(t_{i+1})-w_k(t_i)\sim \mathcal{N}(0,\Delta t).$$ Note that the Milstein method gives us the same expressions.

Now, we can calculate a approximation of the solutions:
 \begin{align*}
u(t_i,x_j)=\sqrt{\frac{2}{\pi}}\sum_{k=1}^N u_k(t_i)\sin(kx_j),\\
v(t_i,x_j)= \sqrt{\frac{2}{\pi}}\sum_{k= 1}^N v_k(t_i)\sin(kx_j).
\end{align*}\\
Then, a pair of simulations of the solution could be seen in \ref{fig:sol}  where the parameters are $\lambda =10,1.5,0.5$, $\sigma =5$, $M=1000$, and $N=100$. The $x$-axis is the space coordinate,the $y$-axis is the time coordinate, and the he $x$-axis is the solution. Notice  the larger the parameter the faster the solution increases. 

\begin{figure}
\subfloat[$\lambda=10$]{\includegraphics[width = 7cm]{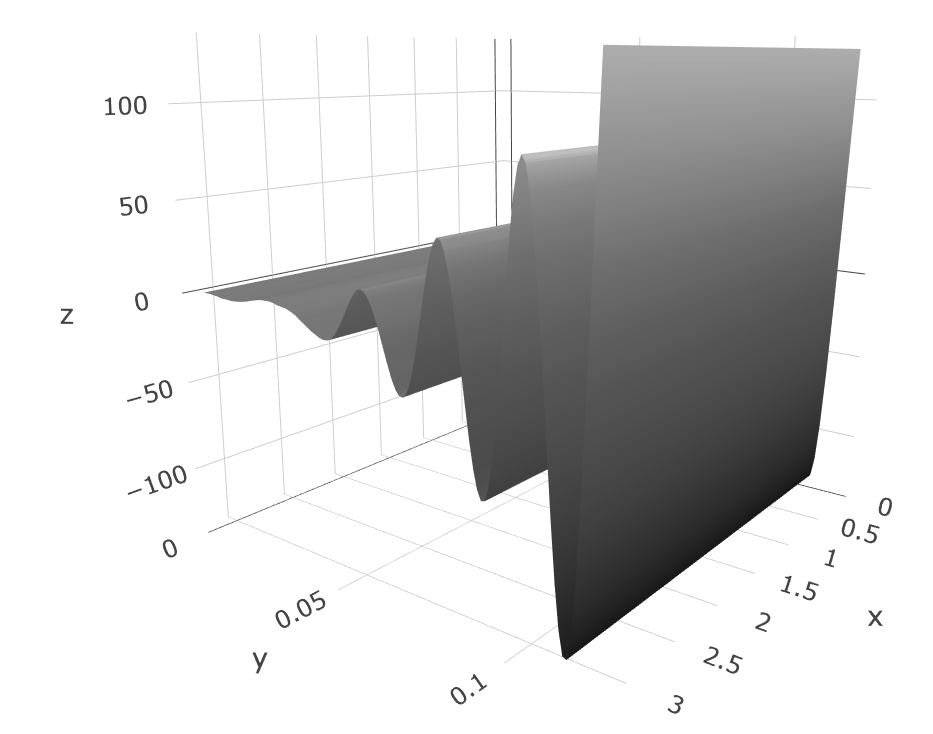}} 
\subfloat[$\lambda=0.5$]{\includegraphics[width = 7cm]{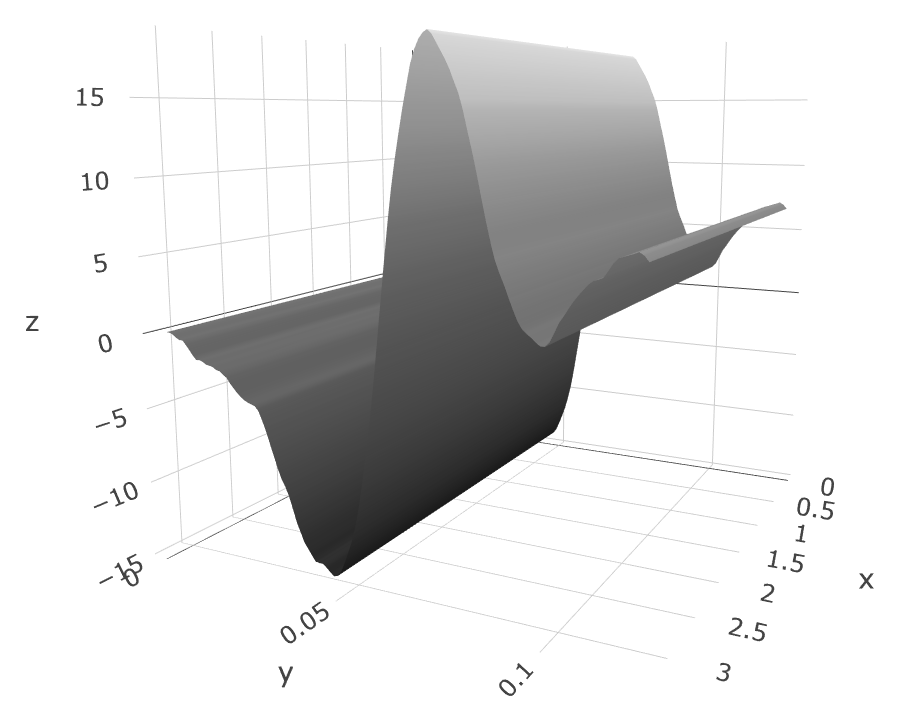}}\\
\subfloat[$\lambda=0.5$]{\includegraphics[width = 7cm]{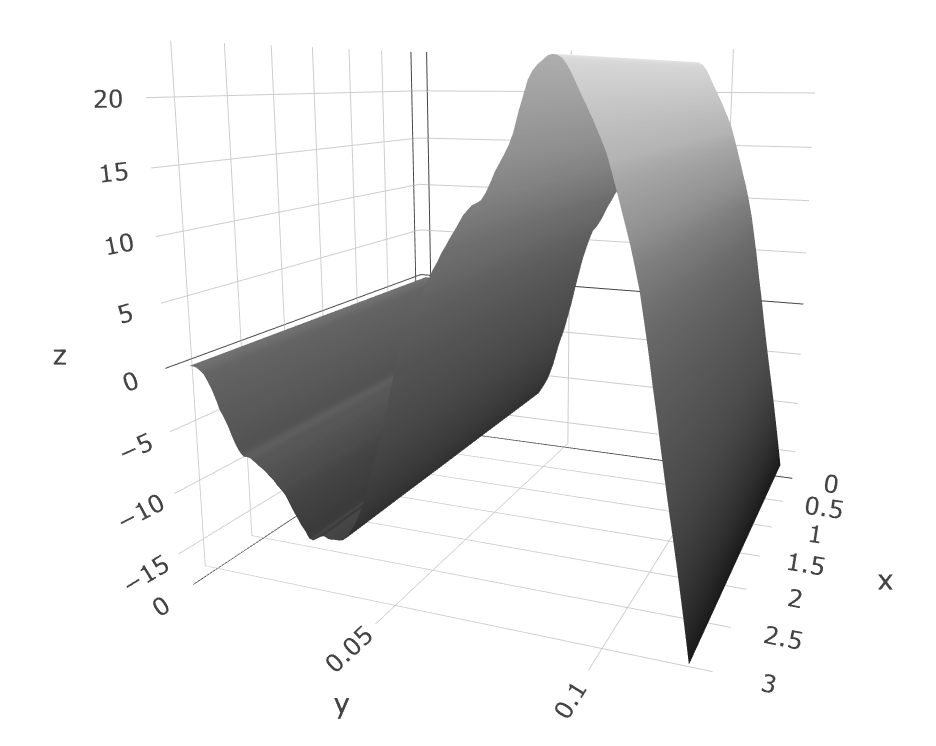}}
\caption{Solutions with distinct speed parameter}
\label{fig:sol}
\end{figure}

 At this point, we can calculate all the terms in \eqref{Dis-MLE-theta} and obtain the discretized MLE, for $N$, $M$ and $\sigma$ fixed. Thus, we present two examples of the simulation of the solutions of the equation and the discretized MLEs. First we fixed $T=1$, $M=1000$ and $N=100$.  

In \ref{fig:con},  the parameters are $N=100$, $M=10000$ with  $\lambda =5,20$  and $\sigma =0.8,3$ respectively. Note in both parameters we can visualize the consistency of both cases.
\begin{figure}
\subfloat[$\lambda=5,\sigma=0.8$]{\includegraphics[width = 6cm]{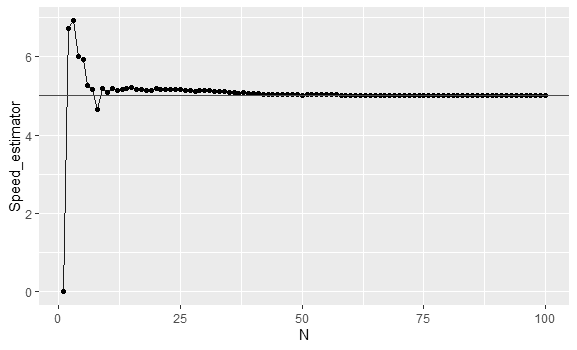}} 
\subfloat[$\lambda=20,\sigma=3$]{\includegraphics[width = 6cm]{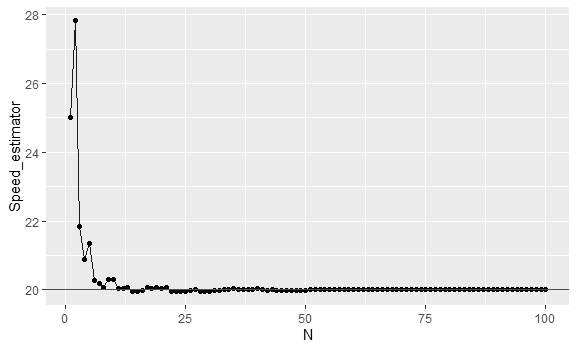}}
\caption{Asymptotic consistency as $N\rightarrow \infty$}
\label{fig:con}
\end{figure}\\
 And then we calculate $100$ estimations in \ref{fig:hist} with the same parameters respectively  and make two histograms of $N^{\frac{3}{2}}\sigma \Upsilon(\widehat{\lambda}_{1,N,M}-\lambda)$. Note the quasi-normality of the 100 estimations.
\begin{figure}
\subfloat[$\lambda=5,\sigma=0.8$]{\includegraphics[width = 6cm]{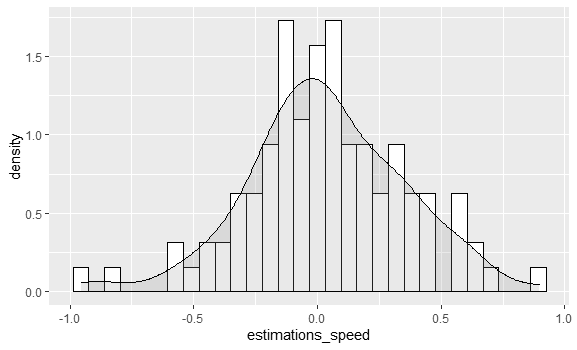}} 
\subfloat[$\lambda=20,\sigma=3$]{\includegraphics[width = 6cm]{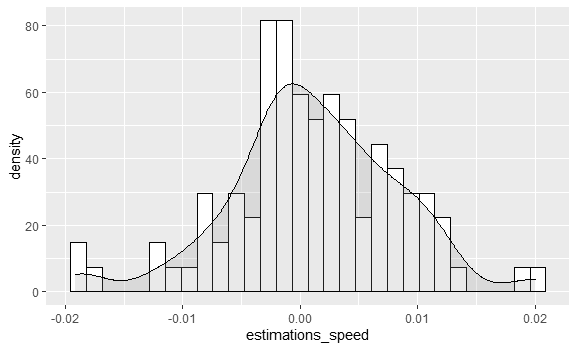}}
\caption{Asymptotic normality for $N$ sufficiently large}
\label{fig:hist}
\end{figure}
And finally we present three histograms in figure \ref{fig:weakasym} for different number of $M$ (partition fineness). In this case, the next parameters are fixed, $T=1$, $N=100$, $\lambda=1$ and $\sigma=0.5$ and it was  used a dyadic partition, i.e., $M=2^{8}, 2^{11}, 2^{15}$.  Note that as theorem \ref{th:DiscrAsymNorm}  states, the finer the partition, the better the empirical distribution resembles to a normal distribution.
\begin{figure}[ht!]
\subfloat[Histogram for $M=2^{8}=256$]{\includegraphics[width=8cm]{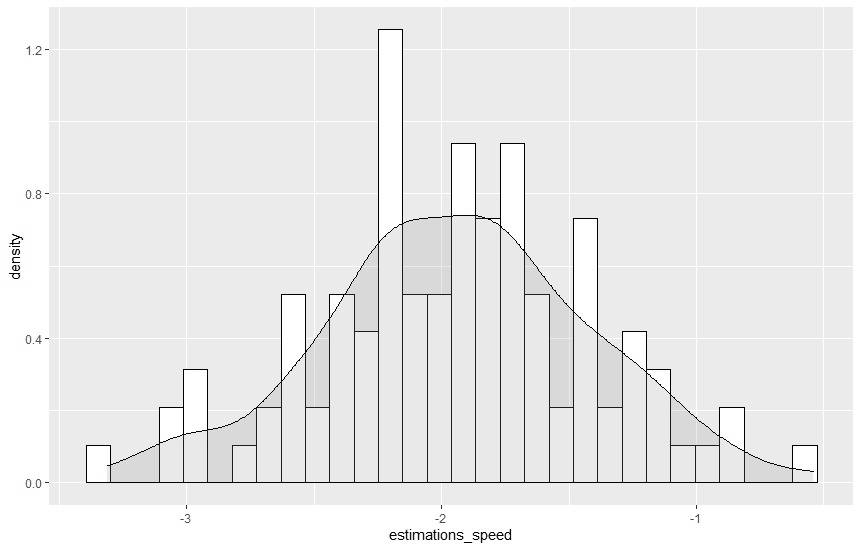}}
\subfloat[Histogram for $M=2^{11}=2048$]{\includegraphics[width=8cm]{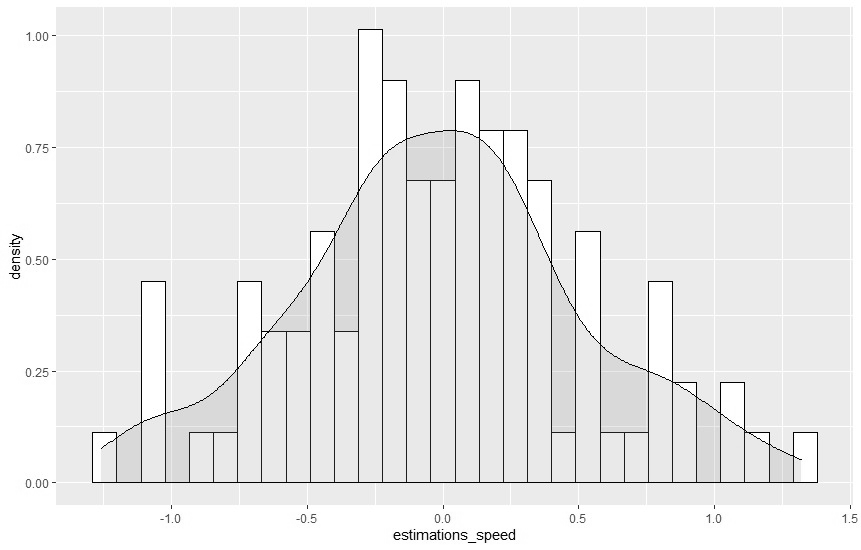}}\\
\subfloat[Histogram for $M=2^{11}=32768$]{\includegraphics[width=8cm]{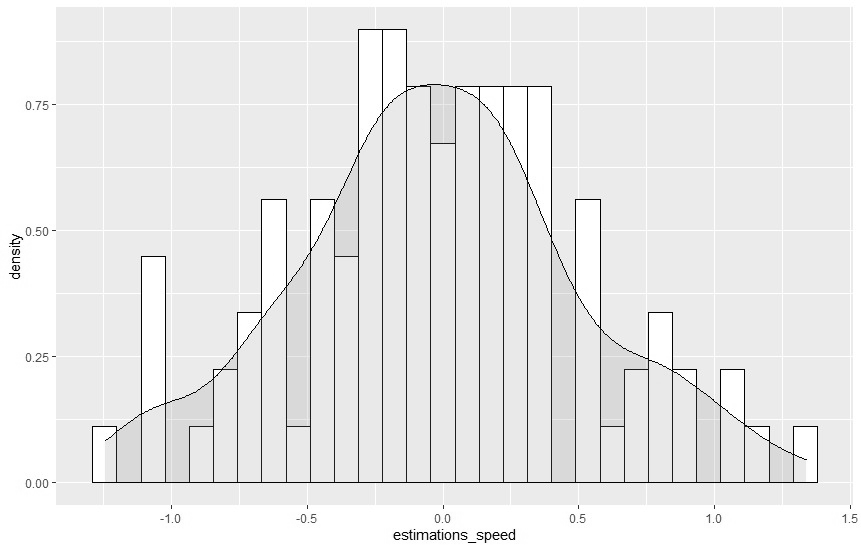}}
\caption{Histogram for estimations as $M\rightarrow\infty$. }
\label{fig:weakasym}
\end{figure}
\FloatBarrier


\appendix

\section{Proofs of technical lemmas}
\label{Appendix:proofs}
We begin with a lemma which is not difficult to prove.
\begin{lemma}\label{lemma-sqr}
For any process $\Phi=\{\Phi_s\}_{s\in[0,t]}$ such that $\sqrt{\Var (\Phi_s)}$ is integrable on $[0,t]$, it holds that
\begin{align*}
    \sqrt{\Var \left( \int_{0}^t \Phi_s ds \right)} \leq \int_{0}^t \sqrt{\Var (\Phi_s)} ds
\end{align*}
\end{lemma}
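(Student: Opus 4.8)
The plan is to recognize this statement as the Minkowski integral inequality in $L^2(\Omega)$ applied to the centered process, and equivalently to prove it by a direct Cauchy--Schwarz bound on the covariance kernel. I would begin by writing $\tilde\Phi_s := \Phi_s - \bE\Phi_s$, so that $\Var(\Phi_s)=\bE[\tilde\Phi_s^2]$ and, since the deterministic integral commutes with the expectation, $\Var\big(\int_0^t\Phi_s\,ds\big)=\bE\big[\big(\int_0^t\tilde\Phi_s\,ds\big)^2\big]$. This reduces the claim to controlling the $L^2(\Omega)$-norm of a Bochner-type integral of the family $\{\tilde\Phi_s\}$.

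Next I would expand the square and apply Fubini's theorem to obtain the covariance representation
\[
\Var\Big(\int_0^t\Phi_s\,ds\Big)=\int_0^t\int_0^t\Cov(\Phi_s,\Phi_r)\,ds\,dr.
\]
Then the key inequality is the elementary covariance Cauchy--Schwarz bound $|\Cov(\Phi_s,\Phi_r)|\le\sqrt{\Var(\Phi_s)}\,\sqrt{\Var(\Phi_r)}$, which turns the double integral into a product of one-dimensional integrals:
\[
\int_0^t\int_0^t |\Cov(\Phi_s,\Phi_r)|\,ds\,dr\le\Big(\int_0^t\sqrt{\Var(\Phi_s)}\,ds\Big)^2.
\]
Taking square roots of the resulting bound on the variance then yields the claim at once.

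The only point requiring genuine care is the applicability of Fubini's theorem, i.e.\ the interchange of the expectation with the double Lebesgue integral in $(s,r)$. This is controlled precisely by the integrability hypothesis: the estimate $\bE\int_0^t\int_0^t|\tilde\Phi_s\tilde\Phi_r|\,ds\,dr\le\big(\int_0^t\sqrt{\Var(\Phi_s)}\,ds\big)^2<\infty$ shows that the integrand is jointly integrable on $\Omega\times[0,t]^2$, which legitimizes every interchange used above. I expect this integrability check to be the main (and essentially the only) obstacle; once it is in place the remaining steps are a routine Cauchy--Schwarz manipulation. An alternative and essentially equivalent route would be to invoke Minkowski's integral inequality directly for the map $s\mapsto\tilde\Phi_s\in L^2(\Omega)$, giving $\big\|\int_0^t\tilde\Phi_s\,ds\big\|_{L^2(\Omega)}\le\int_0^t\|\tilde\Phi_s\|_{L^2(\Omega)}\,ds$, which I would note in a single line as the abstract formulation of the same argument.
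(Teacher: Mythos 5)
Your proof is correct: centering the process, writing the variance as the double integral $\int_0^t\int_0^t\Cov(\Phi_s,\Phi_r)\,ds\,dr$, bounding the covariance kernel by $\sqrt{\Var(\Phi_s)}\sqrt{\Var(\Phi_r)}$ via Cauchy--Schwarz, and justifying the Fubini interchange by the very bound you derive is a complete and rigorous argument, equivalent to Minkowski's integral inequality for $s\mapsto\Phi_s-\bE\Phi_s$ in $L^2(\Omega)$. The paper itself supplies no proof of this lemma (it only remarks that ``it is not difficult to prove''), so your argument is exactly the standard justification the authors leave to the reader.
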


\begin{proof}[Proof of Lemma~\ref{conv-variance}]
We start by computing the Malliavin derivative of  $\widehat{F}_{N}$. If $r\leq t$ and for $1\leq k\leq N$, then
\begin{align*}
D_{r,k}u_k(t) &= \frac{\sigma }{\ell_k} D_{r,k}  \int_0^t\sin\big(\ell_k(t-s)\big)dw_k(s)\\
&= \frac{\sigma }{\ell_k}   \sin\big(\ell_k(t-r)\big).
\end{align*}
Moreover, one has that $D_{r,j}u_k(t)=0$ if $j\neq k$ or $r>t$.
Therefore, for $r\leq T$ and $1\leq j\leq N$, we have by \eqref{chain-rule-formula},
\begin{align}
D_{r,j}\widehat{F}_{N,T}&= \frac{1}{R_{N,T}^2}  j^2 u_j(r)+ \frac{1}{R_{N,T}^2}  \sum_{k=1}^N k^2\int_r^{T} D_{r,j}u_k(t)d w_k(t) \nonumber\\
&= \frac{1}{R_{N,T}^2}  j^2 u_j(r)+\frac{1}{R_{N,T}^2}   j^2 \int_r^{T} D_{r,j}u_j(t)d w_j(t) \nonumber\\
&= \frac{1}{R_{N,T}^2}  j^2 u_j(r)+ \frac{\sigma}{\ell_j R_{N,T}^2}   j^2 \int_r^{T} \sin\big(\ell_j(t-r)\big) d w_j(t). \label{est12}
\end{align}
We continue by setting
$$
A:=\left\| R_{N,T} D\widehat{F}_{N,T}\right\|_{\mathcal{H}}^2 =  R_{N,T}^2\| D\widehat{F}_{N,T}\|_{\mathcal{H}}^2,
$$
and in view of \eqref{est12}, we obtain
\begin{align*}
A&=R_{N,T}^2\int_0^T \sum_{k=1}^N \left[\frac{1}{R_{N,T}^2}  k^2 u_k(r)+ \frac{\sigma}{\ell_k R_{N,T}^2}   k^2 \int_r^{T}  \sin\big(\ell_k(t-r)\big) d w_k(t)\right]^2 d r \nonumber\\
&=  \int_0^{T}\sum_{k=1}^N \Bigg[\frac{1}{R_{N,T}^2}
k^4 u_k^2(r)
+  2\frac{\sigma}{\ell_k R_{N,T}^2} k^4  u_k(r)  \int_r^{T}   \sin\big(\ell_k(t-r)\big) d w_k(t)\\ \nonumber
&\qquad \qquad \qquad \qquad + \frac{\sigma^2}{\ell_k^2 R_{N,T}^2} k^4
\left(\int_r^{T}   \sin\big(\ell_k(t-r)\big) d w_k(t)\right)^2 \Bigg]d r \nonumber\\
&=:A_1+ A_2+A_3. 
\end{align*}
By Lemma \ref{lemma-sqr}, we have
\begin{align*}
\sqrt{ \Var \left(\frac{1}{2} \| R_{N,T} D\widehat{F}_{N,T}\|_{\mathcal{H}}^2 \right) } &\leq \frac{\sqrt{3}}{2}\left(  \sqrt{ \Var(A_1) } +  \sqrt{ \Var(A_2)  } +  \sqrt{\Var(A_3)  } \right) \\
&\leq  \frac{\sqrt{3}}{2}\left(B_1+B_2+B_3\right),
\end{align*}
where
\begin{align*}
B_1&:=\frac{1}{R_{N,T}^2}\int_0^{T} \left[\Var\left( \sum_{k=1}^N k^4 u_k^2(r)\right) \right]^{1/2} d r\\
B_2&:=2\frac{\sigma}{ R_{N,T}^2}\int_0^{T} \left[\Var \left( \sum_{k=1}^N k^4 \ell_k^{-1}  u_k(r)  \int_r^{T}   \sin\big(\ell_k(t-r)\big) d w_k(t)\right) \right]^{1/2} d r\\
B_3&:= \frac{\sigma^2}{R_{N,T}^2} \int_0^{T} \left[\Var \left( \sum_{k=1}^N k^4 \ell_k^{-2}
\left(\int_r^{T}   \sin\big(\ell_k(t-r)\big) d w_k(t)\right)^2 \right)  \right]^{1/2} d r.
\end{align*}
For $B_1$, by the independence of $\{u_k\}_{k\ge 1}$, 
 \begin{align*}
B_1&=\frac{1}{R_{N,T}^2}\int_0^{T} \left(\sum_{k=1}^N k^8 \Var\left(u_k^2(r)\right) \right)^{1/2}d r,
\end{align*}
and remember that $\sum_{k=1}^N k^8  \Var (u_k^2 (t))\sim N^5 \frac{\sigma^4 t^2}{10\lambda^2}$, then we have
\begin{align*}
    \int_0^{T} \left[  \sum_{k=1}^N k^8 \Var\left(u_k^2(r)\right) \right]^{1/2} d r\simeq N^{\frac{5}{2}}\frac{\sigma^2}{\sqrt{10}\lambda}\int_0^T t dr=N^{\frac{5}{2}}\frac{\sigma^2 T^2}{2\sqrt{10}\lambda},\quad \text{as } N\rightarrow\infty
\end{align*}
Thus, by  \ref{lem-1},
\begin{align}
    B_1&=\frac{1}{R_{N,T}^2}\int_0^{T} \left(\sum_{k=1}^N k^8 \Var\left(u_k^2(r)\right) \right)^{1/2}d r\nonumber\\
    &\simeq \frac{N^{\frac{5}{2}}\frac{\sigma^2 T^2}{2\sqrt{10}\lambda}}{N^3 \frac{\sigma^3 
    T^2}{12\lambda}}=C\frac{1}{N^{\frac{1}{2}}}\rightarrow 0 \quad \mbox{as}\ N,T\to \infty. \label{A1_conv2}
\end{align}
For $B_2$, we note that $u_k$ and $w_l$ are independent if $k\neq l$.
Therefore, we rewrite $B_2$ as
\begin{align*}
B_2&=\frac{2\sigma}{R_{N,T}^2}\int_0^{T} \left[ \sum_{k=1}^N \frac{k^8}{\ell_k^2}\Var \left(u_k(r) \int_r^{T}  \sin\big(\ell_k(t-r)\big)d w_k(t) \right)\right]^{1/2} d r.
\end{align*}
By straightforward calculations, we have that
\begin{align*}
\Var \left(u_k(r) \int_r^{T} \sin\big(\ell_k(t-r)\big) d w_k(t) \right)
&\leq \bE\left[ u_k^2(r)\left(   \int_r^{T}  \sin\big(\ell_k(t-r)\big) d w_k(t)\right)^2 \right]\\
& = \bE\Bigg[ u^2_k(r)\\
&\hspace{1cm}\bE\Big[ \left(  \int_r^{T}  \sin\big(\ell_k(t-r)\big)  d w_k(t)\right)^2\Big| \mathscr{F}_r \Big] \Bigg]\\
&= \bE[ u^2_k(r)]  \int_0^{T-r}  \sin^2\big(\ell_k t\big) d t.
\end{align*}
Note that 
\begin{align*}
    \lim_{k\rightarrow\infty} \frac{k^4}{\ell_k^2} \bE[ u^2_k(r)]  \int_0^{T-r}   \sin^2\big(\ell_k t\big) d t&=
    \frac{\sigma^2}{4\lambda^4}r(T-r) .
\end{align*}
Thus,
\begin{align*}
    \int_0^{T} \left[ \sum_{k=1}^N k^4 \frac{k^4}{\ell_k^2} \bE[ u^2_k(r)]  \int_0^{T-r}   \sin^2\big(\ell_k t\big) d t \right]^{1/2} d r&\simeq N^{\frac{5}{2}}\frac{\sigma}{5\lambda^2} \int_0^T \Big( r(T-r)\Big)^\frac{1}{2}dr\\
    &=N^{\frac{5}{2}}\frac{\sigma T^2}{40\lambda^2},\quad \mbox{as}\ N\to \infty.
\end{align*}
 Thus, by  \ref{lem-1},
\begin{align}
    B_2&\leq \frac{2\sigma}{R_{N,T}^2}\int_0^{T} \left[ \sum_{k=1}^N \frac{k^8}{\ell_k^2}\bE[ u^2_k(r)]  \int_0^{T-r}   e^{\lambda_2 t}\sin^2\big(\ell_k t\big) d t\right]^{1/2} d r\nonumber\\
    &\simeq \frac{\sigma N^{\frac{5}{2}}}{N^3 \frac{\sigma^2 T^2}{12\lambda}}=C\frac{1}{N^{\frac{1}{2}}}\rightarrow 0 \quad \mbox{as}\ N,T\to \infty. \label{A2_conv2}
\end{align}
Let us now consider $B_3$. Since $w_k$ and $w_j$ are independents for $k\ne j$, we have
 \begin{align*}
B_3&= \frac{\sigma^2}{R_{N,T}^2} \int_0^{T} \left[\Var \left( \sum_{k=1}^N k^4\ell_k^{-2}   \left(\int_r^{T}    \sin\big(\ell_k(t-r)\big) d w_k(t)\right)^2\right) \right]^{1/2} d r\\
&=   \frac{\sigma^2}{R_{N}^2} \int_0^{T} \left[ \sum_{k=1}^N k^8\ell_k^{-4} \Var \left(\int_r^{T}    \sin\big(\ell_k(t-r)\big)
d w_k(t)\right)^2 \right]^{1/2} d r\\
&\leq  \frac{\sigma^2}{R_{N}^2} \int_0^{T} \left[ \sum_{k=1}^N k^8\ell_k^{-4} \bE \left(\int_r^{T}    \sin\big(\ell_k(t-r)\big)
d w_k(t)\right)^4 \right]^{1/2} d r\\
&=\frac{\sigma^2}{R_{N}^2} \int_0^{T} \left[3 \sum_{k=1}^N k^8\ell_k^{-4}  \left(\int_0^{T-r}    \sin^2\big(\ell_k t\big)
d t\right)^2 \right]^{1/2} d r.
\end{align*}
Notice that 
\begin{align*}
    \lim_{k\rightarrow\infty} \frac{k^4}{\ell_k^4} \left( \int_0^{T-r}  \sin^2\big(\ell_k t\big) d t\right)^2&=
    \frac{\sigma^2}{4\lambda^2}(T-r)^2.
\end{align*}
Thus,
\begin{align*}
     \int_0^{T} \left[ 3\sum_{k=1}^N k^4 \frac{k^4}{\ell_k^4}   \left( \int_0^{T-r}   \sin^2\big(\ell_k t\big) d t\right)^2 \right]^{1/2} d r&\simeq N^{\frac{5}{2}}\frac{\sigma^2}{4\sqrt{\frac{3}{5}}\lambda^2}\int_0^T (T-r) dr,\\
     &=N^{\frac{5}{2}}\frac{\sigma T^2}{2\sqrt{\frac{3}{5}}\lambda^2}, \quad \mbox{as}\ N\to \infty.
\end{align*}
 Thus, by  \ref{lem-1},
\begin{align}
    B_3&\leq \frac{\sigma^2}{R_{N,T}^2} \int_0^{T} \left[3 \sum_{k=1}^N k^8\ell_k^{-4}  \left(\int_0^{T-r}    \sin^2\big(\ell_k t\big)
d t\right)^2 \right]^{1/2} d r.\nonumber\\
    &\simeq \frac{N^{\frac{5}{2}}\frac{\sigma T^2}{2\sqrt{\frac{3}{5}}\lambda^2}}{N^3 \frac{\sigma^3 T^2}{12\lambda}}=C\frac{1}{N^{\frac{1}{2}}}\rightarrow 0 \quad \mbox{as}\ N,T\to \infty. \label{A3_conv2}
\end{align}
Finally, combining \eqref{A1_conv2}, \eqref{A2_conv2} and \eqref{A3_conv2}, we have that for every $\varepsilon>0$,
there exist two independent constants $N_0,T_0>0$ such that for all $N\geq N_0$ and $T\geq T_0$,
$$
B_1+B_2+B_3  < \varepsilon.
$$
This completes the proof.
\end{proof}

\begin{proof}[Proof of Lemma~\ref{lemma:Discr1}]
By the It\^{o}'s isometry, we have
\begin{align*}
    \bE[u_k (t)u_k (s)]&=\frac{\sigma^2}{\ell_k^2}\int_0^t \sin (\ell_k (t-r))\sin (\ell_k (s-r))dr\\
    &=\frac{\sigma^2}{2\ell_k^2}\int_0^t \left[\cos (\ell_k (t-s))-\cos (\ell_k (t+s-2r)) \right]dr\\
    &=\frac{\sigma^2}{2\ell_k^2} \left[t\cos (\ell_k (t-s))-\int_0^t\cos (\ell_k (t+s-2r)) dt\right]\\
    &=\frac{\sigma^2}{2\ell_k^2} \left[t\cos (\ell_k (t-s))+\frac{1}{2\ell_k}\left(\sin (\ell_k (s-t))-\sin (\ell_k (s+t)) \right)\right],\\
\end{align*}
for $s<t$. As far as \eqref{eq:continuityuk} and \eqref{eq:bddnessuk}, since $u_k(t)-u_k(s)$ and $u_k(t)+u_k(s)$ is a Gaussian random variable, it is enough to prove \eqref{eq:continuityuk} and \eqref{eq:bddnessuk} for $l=1$. We note that
for $t<s$,
\begin{align*}
\mathbb{E}|u_k(t)-u_k(s)|^2&=\left|\bE u_k^2 (t)+\bE u_k^2 (s)-2\bE[u_k (t)u_k(s)] \right|\\
&=\frac{\sigma^2}{\ell_k^2}\Bigg|\frac{t}{2}-\frac{\sin(2\ell_k t)}{4\ell_k}+\frac{s}{2}-\frac{\sin(2\ell_k s)}{4\ell_k}-t\cos (\ell_k (t-s))\\
&\hspace{3cm}-\frac{1}{2\ell_k}\left(\sin (\ell_k (s-t))-\sin (\ell_k (s+t)) \right)\Bigg|\\
&=\frac{\sigma^2}{\ell_k^2}\Bigg(\Big|t(1-\cos (\ell_k (t-s)))\Big|+\frac{1}{2}|t-s|+\\
&\hspace{1cm}\Bigg|\sin(\ell_k (t+s))\Big(\frac{1}{2\ell_k}-\frac{\cos(\ell_k(t-s))}{2\ell_k}\Big)\Bigg|+\frac{1}{2\ell_k}\left|\sin (\ell_k (t-s)) \right|\Bigg)\\
&\leq \frac{\sigma^2}{\ell_k^2}\Bigg(t\Big|(\cos(0)-\cos (\ell_k (t-s)))\Big|+\frac{1}{2}|t-s|\\
&\hspace{2cm}+\frac{1}{2}|\cos(0)-\cos(\ell_k(t-s))|+\frac{1}{2} |t-s|\Bigg)\\
&\leq \frac{\sigma^2}{\ell_k^2}\Bigg(t\ell_k |t-s|+\frac{1}{2}(1+\ell_k)|t-s|+\frac{1}{2}|t-s|\Bigg)\leq C\sigma^2\ell_k^{-1}T|t-s|,
\end{align*}
for some $C>0$, and where in the last line we used the fact that $\sin(x)$ and $\cos(x)$ are Lipschitz. Thus part \eqref{eq:continuityuk} is proved.
Similarly for $\mathbb{E}|u_k(t)+u_k(s)|^2$ and the proof is complete.
\end{proof}

\begin{proof}[Proof of Lemma~\ref{lemma:Y-Difference}]
Since $u_k, \ k \geq 1$, are independent, and taking into account that
$$
\int_0^T u_k(t)d w_k(t)=\sum_{i=1}^M \int_{t_{i-1}}^{t_i}u_k(t)d w_k(t),
$$
we have that
\begin{align*}
\mathbb{E}|\xi_{N,M}-\xi_{N}|^2 &
=\mathbb{E}\left|\sum_{k=1}^N k^2 \left[\sum_{i=1}^M u_k(t_{i-1})\left(
w_k(t_i)-w_k(t_{i-1})\right)-\int_0^T u_k(t)d w_k(t)\right] \right|^2\\
&=\sum_{k=1}^N k^4 \mathbb{E}
\left|\sum_{i=1}^M u_k(t_{i-1})\left( w_k(t_i)-w_k(t_{i-1})\right)-\int_0^T u_k(t)d w_k(t)\right|^2 \\
&=\sum_{k=1}^N k^4 \sum_{i=1}^M \int_{t_{i-1}}^{t_i} \mathbb{E}\left(u_k(t_{i-1})-u_k(t)\right)^2d t
\end{align*}
and hence, by \eqref{eq:continuityuk}, there exist constants $C_1,C_2>0$, such that
\begin{align*}
\mathbb{E}|\xi_{N,M}-\xi_{N}|^2
&\leq C_1\sum_{k=1}^N \frac{k^4}{\ell_k} \sum_{i=1}^M \int_{t_{i-1}}^{t_i} T|t_{i-1}-t| d t\\
&= C_1\left(\sum_{k=1}^N \frac{k^4}{\ell_k} \right)\frac{T^3}{M}
\simeq C_2T^2 \frac{ N^{4}}{M}.
\end{align*}
as $N\rightarrow \infty$. Hence, \eqref{eq:Y-NMT} follows at once.

Next we will prove \eqref{eq:I-NMT}. We note that
\begin{align*}
\mathbb{E}|J_{N,M}-J_{N}|^2&=
\mathbb{E}\left|\sum_{k=1}^N k^4\left(\sum_{i=1}^M u_k^2(t_{i-1})(t_i-t_{i-1})
-\int_0^T u_k^2(t)d t\right)\right|^2\\
&=\sum_{k=1}^N k^8 \mathbb{E}\left|
\sum_{i=1}^M \int_{t_{i-1}}^{t_i} \left(u_k^2(t_{i-1})-u_k^2(t)\right)d t \right|^2.
\end{align*}
Consequently, letting $U_i(t):=u_k^2(t_{i-1})-u_k^2(t), \ k\geq 1$, we continue
\begin{align*}
\mathbb{E}|J_{N,M}-J_{N}|^2
&=\sum_{k=1}^N k^8 \sum_{i=1}^M \mathbb{E}\left|
 \int_{t_{i-1}}^{t_i} U_i(t)d t
\right|2+2\sum_{k=1}^N k^8 \sum_{i<j}\mathbb{E}
 \int_{t_{i-1}}^{t_i}\int_{t_{j-1}}^{t_j} U_i(t)U_j(s) d sd t\\
&=: I_1+I_2.
\end{align*}
Notice that by Cauchy-Schwartz inequality,
\begin{align*}
\mathbb{E}|U_i^2(t)|&=\mathbb{E}|u_k^2(t_{i-1})-u_k^2(t)|^2
=\mathbb{E}|u_k(t_{i-1})-u_k(t)|^2|u_k(t_{i-1})+u_k(t)|^2\\
&\leq \left(\mathbb{E}|u_k(t_{i-1})-u_k(t)|^4\right)^{1/2}
\left(\mathbb{E}|u_k(t_{i-1})+u_k(t)|^4\right)^{1/2}.
\end{align*}
Moreover, by \eqref{eq:continuityuk} and \eqref{eq:bddnessuk},
\begin{equation}
\label{Usquare}
\mathbb{E}|U_i^2(t)|\leq c_1\ell_k^{-3}T^2|t-s|,\quad \mbox{for some}\ c_1>0.
\end{equation}
Again by Cauchy-Schwartz inequality and \eqref{Usquare}, we have that
\begin{align*}
I_1&=\sum_{k=1}^N k^8 \sum_{i=1}^M \mathbb{E}\left|  \int_{t_{i-1}}^{t_i} U_i(t)d t \right|^2
\leq \sum_{k=1}^N k^8 \sum_{i=1}^M  (t_i-t_{i-1})
\int_{t_{i-1}}^{t_i} \mathbb{E}|U_i^2(t)|d t\\
&\leq c_1T^2\sum_{k=1}^N k^8\lambda_k^{-3} \sum_{i=1}^M  (t_i-t_{i-1})^3
=c_1\sum_{k=1}^N k^8\ell_k^{-3} \frac{T^5}{M^2}.
\end{align*}
Turning to $I_2$, we first notice that
\begin{align*}
\mathbb{E}|U_i(t)U_j(s)|&=\mathbb{E}\Big[\left(u_k^2(t_{i-1})-u_k^2(t) \right)
\left(u_k^2(t_{j-1})-u_k^2(s) \right)\Big]\\
&=\mathbb{E}\Big[\left(u_k(t_{i-1})-u_k(t)\right)
\left(u_k(t_{i-1})+u_k(t)\right)\\
&\hspace{1cm}\left(u_k(t_{j-1})-u_k(s)\right)\left(u_k(t_{j-1})+u_k(s)\right)\Big]\\
&=\mathbb{E}\Big[\left(u_k(t_{i-1})-u_k(t)\right)\left(u_k(t_{j-1})-u_k(s)\right)u_k(t_{i-1})u_k(t_{j-1})\Big]\\
&+\mathbb{E}\Big[\left(u_k(t_{i-1})-u_k(t)\right)\left(u_k(t_{j-1})-u_k(s)\right)(u_k(t_{i-1})u_k(s)\Big]\\
&+\mathbb{E}\Big[\left(u_k(t_{i-1})-u_k(t)\right)\left(u_k(t_{j-1})-u_k(s)\right)u_k(t)u_k(t_{j-1})\Big]\\
&+\mathbb{E}\Big[\left(u_k(t_{i-1})-u_k(t)\right)\left(u_k(t_{j-1})-u_k(s)\right)u_k(t)u_k(s)\Big].
\end{align*}
By the Wick's Lemma\cite[Theorem 3.1]{Bis}, we continue
\begin{align*}
\mathbb{E}|U_i(t)U_j(s)|
&=\mathbb{E}\left[\left(u_k(t_{i-1})-u_k(t)\right)
\left(u_k(t_{i-1})+u_k(t)\right)\right]\\
&\hspace{2cm}\mathbb{E}\left[\left(u_k(t_{j-1})-u_k(s)\right)\left(u_k(t_{j-1})+u_k(s)\right)\right]\\
&+\mathbb{E}\left[\left(u_k(t_{i-1})-u_k(t)\right)\left(u_k(t_{j-1})-u_k(s)\right)
\right]\\
&\hspace{2cm}\mathbb{E}\left[\left(u_k(t_{i-1})+u_k(t)\right)\left(u_k(t_{j-1})+u_k(s)\right)\right]\\
&+\mathbb{E}\left[\left(u_k(t_{i-1})-u_k(t)\right)\left(u_k(t_{j-1})+u_k(s)\right)
\right]\\
&\hspace{2cm}\mathbb{E}\left[\left(u_k(t_{i-1})+u_k(t)\right)\left(u_k(t_{j-1})-u_k(s)\right)\right]\\
&=:J_1+J_2+J_3.
\end{align*}
For $J_2$, we have
\begin{align*}
\mathbb{E}\left(u_k(t_{i-1})-u_k(t)\right)\left(u_k(t_{j-1})-u_k(s)\right)&=
\mathbb{E}\left(u_k(t_{i-1})u_k(t_{j-1})\right)
-\mathbb{E}\left(u_k(t_{i-1})u_k(s)\right)\\
&-\mathbb{E}\left(u_k(t)u_k(t_{j-1})\right)
+\mathbb{E}\left(u_k(t)u_k(s)\right).
\end{align*}
By \eqref{eq:productuk}, for $i<j$ and $t<s$($t_{i-1}\leq t\leq t_{j-1}\leq s$),
\begin{align*}
&\mathbb{E}\left(u_k(t_{i-1})-u_k(t)\right)\left(u_k(t_{j-1})-u_k(s)\right)=
\frac{\sigma^2}{2\ell_k^2}
\Bigg[t_{i-1}\cos (\ell_k (t_{i-1}-t_{j-1}))\\
&\ +\frac{1}{2\ell_k}\big(\sin (\ell_k (t_{i-1}-t_{j-1}))\nonumber -\sin (\ell_k (t_{i-1}+t_{j-1})) \Big) -t_{i-1}\cos (\ell_k (t_{i-1}-s))\nonumber\\
&\ +\frac{1}{2\ell_k}\left(\sin (\ell_k (t_{i-1}-s))-\sin (\ell_k (t_{i-1}+s)) \right)-t\cos (\ell_k (t-t_{j-1}))\nonumber\\
&\ +\frac{1}{2\ell_k}\left(\sin (\ell_k (t-t_{j-1}))-\sin (\ell_k (t+t_{j-1})) \right) +t\cos (\ell_k (t-s))+\frac{1}{2\ell_k}\big(\sin (\ell_k (t-s))\nonumber\\
&\ -\sin (\ell_k (t+s)) \Big)\Bigg] \nonumber\\
&\leq c_2 \ell_k^{-2}T,
\end{align*}
for some $c_2>0$. By similar arguments, we also obtain
$$
\mathbb{E}\left(u_k(t_{i-1})+u_k(t)\right)\left(u_k(t_{j-1})+u_k(s)\right)\leq
c_3\ell_k^{-2}T,
$$
for some $c_3>0$. Thus,
$$
J_2 \leq c_4\ell_k^{-4}T^2,
$$
for some $c_4>0$. By analogy, one can treat $J_1$ and $J_3$, and derive the following upper bounds:
$$
J_1\leq c_5\ell_k^{-4}T^2, \qquad
J_3 \leq c_6\ell_k^{-4}T^2,
$$
for some $c_5,c_6>0$. Finally, combining the above, we have
\begin{align*}
I_2&\leq c_7 T^2\sum_{k=1}^N k^8 \ell_k^{-4} \sum_{i<j} \int_{t_{j-1}}^{t_j}\int_{t_{i-1}}^{t_i}d t d s \\
&\leq c_8\sum_{k=1}^N k^8 \ell_k^{-4} \frac{T^4}{M},
\quad \mbox{for some}\ c_7,c_8>0.
\end{align*}
Thus, using the estimates for $I_1,I_2$, we conclude that
$$
I_1+I_2 \leq  c_9\sum_{k=1}^N k^8 \Big(\ell_k^{-3}\frac{T^5}{M^2}+\ell_k^{-4} \frac{T^4}{M} \Big)\simeq C_2 \frac{N^6 T^5 }{M},
$$
as $N\rightarrow \infty$ and hence \eqref{eq:I-NMT} is proved.
The estimate \eqref{eq:V-NMT} is proved by similar arguments.
\end{proof}

\section{Elements of Malliavin Calculus}\label{apendix:MalCal}
For the sake of completeness, in this section we recall some facts from Malliavin calculus associated with a Gaussian process. For more details, we refer to~\cite{Nualart2006}. Toward this end, let $T>0$ be given. We consider the space $\mathcal{H}=L^2\left([0,T]\times \mathcal{M}\right)$, where $\mathcal{M}$ is the counting measure on $\mathbb{N}$, namely, for $v\in\mathcal{H}$,
$$
v(t)=\sum_{k=1}^\infty v_k(t).
$$
We endow $\mathcal{H}$ with the inner product and the norm
\begin{equation}\label{innerprod-1}
\inner{u}{v}_{\mathcal{H}}:= \sum_{k=1}^\infty\int_0^T u_k(t) v_k(t) d t,\quad
\mbox{and}\quad
\norm{v}_{\mathcal{H}}:= \sqrt{\inner{v}{v}_{\mathcal{H}}}, \quad \ u,v\in \mathcal{H}.
\end{equation}
We fix an isonormal Gaussian process $W=\{W(h)\}_{h\in\mathcal{H}}$ on $\mathcal{H}$, defined on a suitable probability space $(\Omega, \mathscr{F},\mathbb{P})$, such that $\mathscr{F}=\sigma(W)$ is the $\sigma$-algebra generated by $W$.
Denote by $C_p^{\infty}(\mathbb{R}^n)$, the space of all smooth functions on $\R^n$ with at most polynomial growth partial derivatives.
Let $\mathcal{S}$ be the space of simple functionals of the form
\begin{equation*}
F = f(W(h_1), \dots, W(h_n)),\quad
f\in C_p^{\infty}(\mathbb{R}^n), \  h_i \in \mathcal{H},\ 1\leq i \leq n.
\end{equation*}
As usual, we define the Malliavin derivative $D$ on $\mathcal{S}$ by
\begin{equation}\label{def-MD}
DF=\sum_{i=1}^n  \frac {\partial f} {\partial x_i} (W(h_1), \dots, W(h_n)) h_i, \quad F\in \mathcal{S}.
\end{equation}
We note that the derivative operator $D$ is a closable operator from $L^p(\Omega)$ into $L^p(\Omega;  \mathcal{H})$,  for any $p \geq1$.
Let $\mathbb{D}^{1,p}$, $p \ge 1$, be the completion of $\mathcal{S}$
with respect to the norm
$$
\norm{F}_{1,p} = \left(\bE\big[ |F|^p \big] + \bE\big[  \norm{DF}^p_\mathcal{H} \big] \right)^{1/p}.
$$
Also, for $F$ of the form
$$
F=f\left(W\left(\mathbbm{1}_{[0,t_1]}\right),\dots,W\left(\mathbbm{1}_{[0,t_n]}\right)\right),
\quad t_1,\dots,t_n\in [0,T],
$$
we define the Malliavin derivative of $F$ at the point $t$ as
$$
D_t F=\sum_{i=1}^n \frac{\partial f}{\partial x_i}\left(W\left(\mathbbm{1}_{[0,t_1]}\right),\dots,W\left(\mathbbm{1}_{[0,t_n]}\right)\right)
\mathbbm{1}_{[0,t_i]}(t),\quad t\in [0,T],
$$
where $\mathbbm{1}_{A}$ denotes the indicator function of set $A$. For simplicity,
from now on, we define $W(t):=W\left(\mathbbm{1}_{[0,t]}\right)$, $t\in [0,T]$, to represent a standard Brownian motion on $[0,T]$.
If $\mathscr{F}$ is generated by  a collection of independent standard Brownian motions $\{W_k,\ k\geq 1\}$ on $[0,T]$, we define the Malliavin derivative of $F$ at the point $t$ by
\begin{equation}\label{def-MD-tj}
D_tF:=\sum_{k=1}^{\infty}D_{t,k}F:=\sum_{k=1}^{\infty}\sum_{i=1}^n \frac{\partial f}{\partial x_i}\left(W_k(t_1),\dots,W_k(t_n)\right)
\mathbbm{1}_{[0,t_i]}(t),\quad t\in [0,T].
\end{equation}
Next, we denote by $\boldsymbol{\delta}$, the adjoint of the Malliavin derivative $D$ (as defined in
\eqref{def-MD})
given by the duality formula
\begin{equation*}
\bE\left(\boldsymbol{\delta}(v) F\right) = \bE\left(\inner{v}{DF}_\mathcal{H}\right),
\end{equation*}
for $F \in \mathbb{D}^{1,2}$ and $v\in \mathcal{D}(\boldsymbol{\delta})$, where $\mathcal{D}(\boldsymbol{\delta})$ is the domain of $\boldsymbol{\delta}$. If $v\in  L^2(\Omega;\mathcal{H})\cap \mathcal{D}(\boldsymbol{\delta})$ is a square integrable process, then the adjoint $\boldsymbol{\delta}(v)$ is called the Skorokhod integral of the process $v$
(cf. \cite{Nualart2006}), and it can be written as
\begin{align*}
 \boldsymbol{\delta}(v)=\int_0^T v(t) dW(t).
\end{align*}

Now, the next theorem tell us how to calculate the Malliavin derivative for the Skorokhod integral of the process $v$ (in particular the derivative for a It\^{o} integral).

\begin{proposition}\cite[Theorem 1.3.8]{Nualart2006} \label{chain-rule-Malliavin}
Suppose that $v \in L^2(\Omega;\mathcal{H})$ is a square integrable process such that $v(t)\in \mathbb{D}^{1,2}$ for almost all $t\in [0,T]$.
Assume that the two parameter process $\{D_tv(s)\}$ is square integrable in $L^2\left([0,T]\times \Omega;\mathcal{H}\right)$.
Then, $\boldsymbol{\delta}(v) \in \mathbb{D}^{1,2}$ and
\begin{align}\label{chain-rule-formula}
D_t \left(\boldsymbol{\delta}(v)\right) = v(t) + \int_0^{T} D_tv(s) d W(s),\quad t\in [0,T].
\end{align}
\end{proposition}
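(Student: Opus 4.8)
The plan is to establish \eqref{chain-rule-formula} first on a dense class of smooth simple processes, where everything reduces to an explicit algebraic computation, and then to remove the smoothness by a closability argument. First I would take $v$ of the elementary form $v=\sum_{j=1}^n F_j h_j$ with $F_j\in\mathcal{S}$ and $h_j\in\mathcal{H}$ deterministic. For such $v$ the Skorokhod integral admits the explicit representation $\boldsymbol{\delta}(v)=\sum_{j=1}^n F_j W(h_j)-\sum_{j=1}^n\inner{DF_j}{h_j}_{\mathcal{H}}$, which is precisely the defining relation of the divergence on $\mathcal{S}\otimes\mathcal{H}$ and follows from the duality formula.

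Next I would differentiate this expression term by term. Applying the Leibniz rule for $D$ together with the identity $D_tW(h_j)=h_j(t)$, the first sum yields $\sum_j (D_tF_j)\,W(h_j)+\sum_j F_j\,h_j(t)$, and the clean term $\sum_j F_j h_j(t)$ is exactly $v(t)$, which produces the first summand on the right-hand side of \eqref{chain-rule-formula}. For the remaining piece I would compute $D_t\inner{DF_j}{h_j}_{\mathcal{H}}=\inner{D_tDF_j}{h_j}_{\mathcal{H}}$ and invoke the symmetry of the second Malliavin derivative, $D_sD_tF_j=D_tD_sF_j$. Recognizing that $D_tv=\sum_j (D_tF_j)h_j$ as an element of $\mathcal{H}$, the surviving terms reassemble exactly into $\boldsymbol{\delta}(D_tv)=\int_0^T D_tv(s)\,dW(s)$, so that $D_t\boldsymbol{\delta}(v)=v(t)+\boldsymbol{\delta}(D_tv)$, establishing the formula for elementary $v$.

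Finally I would remove the smoothness assumption by approximation. Given $v$ satisfying the stated hypotheses, by density of $\mathcal{S}\otimes\mathcal{H}$ in the domain I choose simple processes $v_n\to v$ in $L^2(\Omega;\mathcal{H})$ with $Dv_n\to Dv$ in $L^2([0,T]\times\Omega;\mathcal{H})$. The a priori bound $\bE[\norm{D\boldsymbol{\delta}(v)}_{\mathcal{H}}^2]\le \bE[\norm{v}_{\mathcal{H}}^2]+\bE[\norm{Dv}_{\mathcal{H}\otimes\mathcal{H}}^2]$, read off from the elementary identity, shows that $\{\boldsymbol{\delta}(v_n)\}$ is Cauchy in $\mathbb{D}^{1,2}$; hence $\boldsymbol{\delta}(v)\in\mathbb{D}^{1,2}$ and $\boldsymbol{\delta}(v_n)\to\boldsymbol{\delta}(v)$ there. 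Passing to the limit in the identity $D_t\boldsymbol{\delta}(v_n)=v_n(t)+\int_0^T D_tv_n(s)\,dW(s)$, using continuity of $\boldsymbol{\delta}$ from $L^2(\Omega;\mathcal{H})$ into $L^2(\Omega)$ and closedness of $D$, yields \eqref{chain-rule-formula} in the stated generality.

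The main obstacle will be this closability step: I must justify the a priori $L^2$ estimate on $D\boldsymbol{\delta}(v)$ with enough care that the limits on both sides converge in the correct topologies, and confirm that the approximating sequence converges simultaneously in $L^2(\Omega;\mathcal{H})$ and in the first-derivative norm. By contrast, the algebraic identity on simple processes is routine once the symmetry $D_sD_t=D_tD_s$ of the iterated derivative is in hand.
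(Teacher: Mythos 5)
First, a point of reference: the paper itself gives no proof of this proposition --- it is quoted (with slightly weakened hypotheses) from Nualart \cite[Proposition 1.3.8]{Nualart2006} --- so your attempt can only be compared with the standard textbook argument. Your first two steps match it exactly and are correct: for an elementary process $v=\sum_{j}F_jh_j$ one has $\boldsymbol{\delta}(v)=\sum_j F_jW(h_j)-\sum_j\inner{DF_j}{h_j}_{\mathcal{H}}$, and the Leibniz rule, the identity $D_tW(h_j)=h_j(t)$, and the symmetry $D_sD_tF=D_tD_sF$ yield $D_t\boldsymbol{\delta}(v)=v(t)+\boldsymbol{\delta}(D_tv)$ on this class.

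The genuine gap is your third step, and it is not repairable in the form you propose. The divergence $\boldsymbol{\delta}$ is a closed \emph{unbounded} operator from $L^2(\Omega;\mathcal{H})$ into $L^2(\Omega)$, not a continuous one, so you cannot pass to the limit in $\int_0^T D_tv_n(s)\,dW(s)$ merely from $Dv_n\to Dv$ in $L^2([0,T]\times\Omega;\mathcal{H})$. Likewise the a priori bound $\bE\norm{D\boldsymbol{\delta}(v)}_{\mathcal{H}}^2\leq \bE\norm{v}_{\mathcal{H}}^2+\bE\norm{Dv}_{\mathcal{H}\otimes\mathcal{H}}^2$ cannot be ``read off'' from the elementary identity: estimating $\bE[\boldsymbol{\delta}(D_tv)^2]$ requires the energy inequality $\bE[\boldsymbol{\delta}(w)^2]\leq\bE\norm{w}_{\mathcal{H}}^2+\bE\norm{Dw}_{\mathcal{H}\otimes\mathcal{H}}^2$ with $w=D_tv$, which brings in the second derivative $D_sD_tv$ --- an object the hypotheses do not control. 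The obstruction is quantitative: writing $v(t)=\sum_n \mathbb{I}_n(f_n(\cdot,t))$, the hypotheses give $\sum_n(n+1)\,n!\,\norm{f_n}^2<\infty$, whereas $\boldsymbol{\delta}(v)\in\mathbb{D}^{1,2}$ demands $\sum_n(n+1)\,(n+1)!\,\norm{\tilde f_n}^2<\infty$, an extra factor of order $n$; taking fully symmetric kernels with $n!\,\norm{f_n}^2=(n+1)^{-2}(\log n)^{-2}$ produces $v$ satisfying all the stated hypotheses with $\boldsymbol{\delta}(v)\notin\mathbb{D}^{1,2}$, so the statement as transcribed in the paper is actually false in this generality and no closability argument can succeed. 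This is precisely why Nualart's Proposition 1.3.8 carries the additional hypothesis that $D_tv$ is Skorokhod integrable for a.e.\ $t$ with a version of $\left\{\boldsymbol{\delta}(D_tv)\right\}_{t\in[0,T]}$ in $L^2([0,T]\times\Omega)$, and why his proof proceeds through the Wiener chaos expansion (comparing $(n+1)\tilde f_n(\cdot,t)$ with $f_n(\cdot,t)$ plus the symmetrized remainder) rather than by density. To salvage your write-up, either add that hypothesis and run the chaos computation, or note that in this paper's application the integrand is adapted and lies in the first chaos, so $\boldsymbol{\delta}$ reduces to the It\^{o} integral and the formula can be verified directly there.
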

Next, we present a connection between Malliavin calculus and Stein's method. For symmetric functions $f\in  L^2\big([0,T]^q\big)$, $q\geq 1$, let us define the following multiple integral of order $q$ 
\begin{equation*}
 \mathbb{I}_q(f)=q!\int_0^T d W(t_1)\int_0^{t_1} d W(t_2)\cdots \int_0^{t_{q-1}} d W(t_q) f(t_1,\ldots,t_q),
\end{equation*}
with $0<t_1<t_2<\cdots<t_q<T$. Note that $\mathbb{I}_q(f)$ is also called the $q$-th Wiener chaos \cite[Theorem 2.7.7]{NourdinPeccati2012}.
\begin{definition}
the total variation of two random variables $F$ and $G$ in $\mathbb{R}^d$, denote by $d_{TV}(F,G)$, is
$$
d_{TV}(F,G)=\sup_{B\in \mathbb{B}(\mathbb{R}^d)} \left|\mathbb{P}[F\in B]-\mathbb{P[G\in B} \right|,
$$
\end{definition} 

We present the next result about an upper bound for the total variation of a $q$-th multiple integral and a Gaussian random variable.
\begin{proposition} \cite[Theorem 5.2.6]{NourdinPeccati2012} \label{Th2.2}
Let $q\ge 2$ be an integer, and let $F=\mathbb{I}_q(f)$ be a multiple integral of order $q$ such that $\bE(F^2)=\sigma^2>0$. Then, for $\mathcal{N}=\mathcal{N}(0,\sigma^2)$,
\begin{equation*}
d_{TV}(F,\mathcal{N})\le \frac{2}{\sigma^2}\sqrt{\Var\left(\frac{1}{q}\norm{DF}_{\mathcal{H}}^2 \right)}.
\end{equation*}
\end{proposition}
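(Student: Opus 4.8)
The plan is to run the classical Nourdin--Peccati argument, combining Stein's method for the law $\mathcal{N}=\mathcal{N}(0,\sigma^2)$ with the Malliavin integration-by-parts formula. First I would recall Stein's characterization: for any Borel set $B$, let $g_B$ be the bounded solution of the Stein equation
\[
\sigma^2 g_B'(x) - x\, g_B(x) = \mathbbm{1}_B(x) - \mathbb{P}(\mathcal{N}\in B),
\]
and invoke the standard estimate $\norm{g_B'}_\infty \le 2/\sigma^2$. Since $d_{TV}(F,\mathcal{N})=\sup_B|\mathbb{P}(F\in B)-\mathbb{P}(\mathcal{N}\in B)|$ and evaluating the Stein equation at $F$ gives $\mathbb{P}(F\in B)-\mathbb{P}(\mathcal{N}\in B)=\bE[\sigma^2 g_B'(F)-F\,g_B(F)]$, the whole problem reduces to controlling this last expectation uniformly in $B$.

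The heart of the argument is to rewrite $\bE[F g_B(F)]$ using Malliavin calculus. Since $F=\mathbb{I}_q(f)$ lies in the $q$-th Wiener chaos, it is an eigenfunction of the Ornstein--Uhlenbeck generator $L=-\boldsymbol{\delta} D$ with eigenvalue $-q$, so that $-DL^{-1}F=\tfrac1q DF$. The integration-by-parts formula $\bE[F g(F)]=\bE[\inner{DF}{-DL^{-1}F}_{\mathcal{H}}\,g'(F)]$ then gives
\[
\bE[F g_B(F)] = \bE\!\left[\tfrac1q\norm{DF}_{\mathcal{H}}^2\, g_B'(F)\right],
\]
and substituting back yields the exact identity
\[
\mathbb{P}(F\in B)-\mathbb{P}(\mathcal{N}\in B) = \bE\!\left[g_B'(F)\Big(\sigma^2 - \tfrac1q\norm{DF}_{\mathcal{H}}^2\Big)\right].
\]

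From here the bound is mechanical. I would take absolute values, pull out $\norm{g_B'}_\infty \le 2/\sigma^2$, and then observe that $\bE[\tfrac1q\norm{DF}_{\mathcal{H}}^2]=\sigma^2$, a consequence of the chaos identity $\bE\norm{DF}_{\mathcal{H}}^2=q\,\bE[F^2]$. This centering lets Cauchy--Schwarz produce the variance:
\[
\bE\Big|\sigma^2-\tfrac1q\norm{DF}_{\mathcal{H}}^2\Big| \le \sqrt{\Var\Big(\tfrac1q\norm{DF}_{\mathcal{H}}^2\Big)}.
\]
Taking the supremum over $B$ delivers the stated estimate $d_{TV}(F,\mathcal{N})\le \tfrac{2}{\sigma^2}\sqrt{\Var(\tfrac1q\norm{DF}_{\mathcal{H}}^2)}$.

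The main obstacle is the regularity of the Stein solution $g_B$: for $h=\mathbbm{1}_B$ the function $g_B$ is only Lipschitz (differentiable a.e.), whereas the integration-by-parts formula is cleanly stated for $g\in C^1$ with bounded derivative. I would resolve this by a standard mollification, approximating $\mathbbm{1}_B$ by smooth functions, solving Stein's equation for each, applying the identity to the smooth solutions (whose derivatives stay uniformly bounded by $2/\sigma^2$), and passing to the limit by dominated convergence together with $F\in\mathbb{D}^{1,2}$ (automatic since $F$ belongs to a fixed chaos). One should also verify the $\sigma$-scaling of the estimate $\norm{g_B'}_\infty\le 2/\sigma^2$, most transparently by normalizing to the case $\sigma=1$ and rescaling.
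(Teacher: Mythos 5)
Your proposal is correct and is essentially the argument behind the cited result: the paper itself gives no proof of this proposition (it is quoted verbatim from Nourdin--Peccati, Theorem 5.2.6), and your chain --- Stein equation for $\mathcal{N}(0,\sigma^2)$ with $\norm{g_B'}_\infty\le 2/\sigma^2$, the integration by parts $\bE[Fg(F)]=\bE[\inner{DF}{-DL^{-1}F}_{\mathcal{H}}g'(F)]$, the chaos eigenvalue relation $-DL^{-1}F=\tfrac1q DF$, the centering $\bE[\tfrac1q\norm{DF}_{\mathcal{H}}^2]=\bE[F^2]=\sigma^2$, and Cauchy--Schwarz --- is exactly how that reference proves it. The only point to make explicit in your final mollification step is that the law of $F$ is absolutely continuous (true for any nonzero multiple integral of order $q\ge 1$, by Shigekawa's theorem), which is what guarantees the limit over approximations of $\mathbbm{1}_B$ is unaffected by the boundary $\partial B$.
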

With that proposition we can bound the total variation between $F$ and a normal distribution by the variance of the Malliavin derivative of $F$.  We present the main result that we will use to prove the asymptotic normality of the estimators. It tells us that the asymptotic normality can be equivalent to the asymptotic behavior of the total variation between the sequence $F_n$, that  we want to prove its asymptotic normality, and the normal distribution.

\begin{theorem} \cite[Corollary 5.2.8]{NourdinPeccati2012} \label{Equiv}
Let $F_N=\mathbb{I}_q(f_N)$, $N\ge 1$, be a sequence of random variables
for some fixed integer $q\ge 2$.
Assume that $\bE\left(F_N^2\right)\rightarrow \sigma^2>0$, as $N\rightarrow\infty$.
Then, as $N \rightarrow\infty$, the following assertions are equivalent:
\begin{enumerate}
\item $F_N\overset{d}\longrightarrow \mathcal{N}:=\mathcal{N}(0,\sigma^2)$;
\item $d_{TV}\left(F_N,\mathcal{N}\right)\longrightarrow 0$.
\end{enumerate}
\end{theorem}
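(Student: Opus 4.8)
The implication $(2)\Rightarrow(1)$ is immediate and uses no chaos structure: taking $B=(-\infty,x]$ in the definition of $d_{TV}$ gives $\sup_{x}|\mathbb{P}(F_N\le x)-\Phi_\sigma(x)|\le d_{TV}(F_N,\mathcal{N})$, where $\Phi_\sigma$ is the (continuous) distribution function of $\mathcal{N}(0,\sigma^2)$; hence $d_{TV}(F_N,\mathcal{N})\to 0$ forces uniform, in particular pointwise, convergence of distribution functions, which is convergence in distribution. The whole content therefore lies in $(1)\Rightarrow(2)$, and the plan is to combine two chaos-specific facts with the Malliavin--Stein bound of Proposition~\ref{Th2.2}.

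First I would pass from weak convergence to moment convergence. Writing $\sigma_N^2:=\bE[F_N^2]$, so that $\sigma_N^2\to\sigma^2$ by hypothesis, I would invoke hypercontractivity on the fixed Wiener chaos of order $q$: on such a chaos all $L^p$-norms are equivalent, so uniform $L^2$-boundedness of $\{F_N\}$ yields uniform integrability of $\{F_N^p\}$ for every $p$. Convergence in distribution $F_N\overset{d}{\longrightarrow}\mathcal{N}(0,\sigma^2)$ then upgrades to convergence of all moments, in particular $\bE[F_N^4]\to\bE[\mathcal{N}^4]=3\sigma^4$. Consequently the fourth cumulant satisfies
\begin{equation*}
\kappa_4(F_N):=\bE[F_N^4]-3\big(\bE[F_N^2]\big)^2\longrightarrow 3\sigma^4-3\sigma^4=0 .
\end{equation*}

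Next I would use the chaos identity controlling the variance of the squared Malliavin derivative by the fourth cumulant: for $F=\mathbb{I}_q(f)$ there is an inequality of the form
\begin{equation*}
\Var\left(\frac{1}{q}\norm{DF}_{\mathcal{H}}^2\right)\le c_q\,\kappa_4(F),\qquad \text{with }c_q=\tfrac{q-1}{3q},
\end{equation*}
both sides being non-negative combinations of the contraction norms $\norm{f\otimes_r f}$, $1\le r\le q-1$. Applying this to $F_N$ and combining with the previous step gives $\Var\big(\tfrac1q\norm{DF_N}_{\mathcal{H}}^2\big)\to 0$. I would then feed this into Proposition~\ref{Th2.2}, applied to $F_N$ with its own variance $\sigma_N^2$ (positive for large $N$ since $\sigma_N^2\to\sigma^2>0$), obtaining
\begin{equation*}
d_{TV}\big(F_N,\mathcal{N}(0,\sigma_N^2)\big)\le \frac{2}{\sigma_N^2}\sqrt{\Var\left(\frac{1}{q}\norm{DF_N}_{\mathcal{H}}^2\right)}\longrightarrow 0 .
\end{equation*}
Finally, to replace $\sigma_N^2$ by the target $\sigma^2$, I would note that $\sigma_N^2\to\sigma^2$ implies $d_{TV}\big(\mathcal{N}(0,\sigma_N^2),\mathcal{N}(0,\sigma^2)\big)\to 0$ by continuity of the total variation distance in the variance parameter (via Scheff\'e's lemma on the Gaussian densities), and conclude through the triangle inequality $d_{TV}(F_N,\mathcal{N})\le d_{TV}\big(F_N,\mathcal{N}(0,\sigma_N^2)\big)+d_{TV}\big(\mathcal{N}(0,\sigma_N^2),\mathcal{N}(0,\sigma^2)\big)$.

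The main obstacle is the genuinely chaos-specific middle step: the passage from mere convergence in distribution to the $L^2$-concentration $\Var(\tfrac1q\norm{DF_N}_{\mathcal{H}}^2)\to 0$, effected through moment convergence (hypercontractivity) and the fourth-cumulant inequality. This is exactly the Fourth Moment Theorem of Nualart--Peccati and is false for general sequences; it rests on the product formula for multiple integrals and the explicit expansion of $\norm{DF_N}_{\mathcal{H}}^2$ into contractions. Once that concentration is secured, the remaining ingredients---the Malliavin--Stein bound and the variance bookkeeping---are routine.
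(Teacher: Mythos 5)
Your proposal is correct, but note that the paper offers no proof of this statement at all: it is quoted verbatim as \cite[Corollary 5.2.8]{NourdinPeccati2012}, so there is nothing internal to compare against. Your argument reproduces exactly the route of that cited source --- hypercontractivity on a fixed chaos to upgrade weak convergence to fourth-moment convergence, the contraction inequality $\Var\big(\tfrac{1}{q}\norm{DF_N}_{\mathcal{H}}^2\big)\le \tfrac{q-1}{3q}\,\kappa_4(F_N)$ (Lemma 5.2.4 there), the Malliavin--Stein bound of Proposition~\ref{Th2.2} applied with the variance $\sigma_N^2$, and the Scheff\'e/triangle-inequality step replacing $\mathcal{N}(0,\sigma_N^2)$ by $\mathcal{N}(0,\sigma^2)$ --- so it is a faithful and complete rendering of the standard Fourth Moment Theorem proof.
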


\end{document}